\documentclass[a4paper]{amsart}
\usepackage[utf8]{inputenc}
\usepackage[english]{babel}
\usepackage{fancyhdr}
\usepackage{amsmath}
\usepackage{amsfonts,amstext,url,geometry}
\usepackage{amssymb,amsthm,amscd,amsxtra,wasysym,graphicx}

\usepackage{xcolor}

\usepackage{hyperref}
\hypersetup{
	unicode=false,          
	pdftoolbar=true,        
	pdfmenubar=true,        
	pdffitwindow=false,     
	pdfstartview={FitH},    
	pdftitle={Chern-Ricci flows},    
	pdfauthor={Quang-Tuan Dang},     
	colorlinks=true,       
	linkcolor=blue,          
	citecolor=blue,        
	filecolor=black,      
	urlcolor=blue}           

\usepackage{cleveref}
\usepackage{indentfirst}
\newtheorem{theorem}{Theorem}
\newtheorem{lemma}[theorem]{Lemma}
\newtheorem{corollary}[theorem]{Corollary}

\newtheorem{proposition}[theorem]{Proposition}
\theoremstyle{definition}
\newtheorem{definition}[theorem]{Definition}

\newtheorem{remark}[theorem]{Remark}
\theoremstyle{plain}
\newtheorem{bigthm}{Theorem}

\numberwithin{theorem}{section}
\numberwithin{equation}{section}
 
\DeclareMathOperator \PSH {{\rm PSH}}
\DeclareMathOperator \loc {{\rm loc}}

\DeclareMathOperator \Vol {{\rm Vol}}

\DeclareMathOperator \tr {\textrm{tr}}

\def\om{\omega}
\def\tom{\Tilde{\omega}}

\def\f{\varphi}
\def\dc{dd^c}
\def\p{\psi}

\frenchspacing
\textwidth=15.5cm
\textheight=23cm
\parindent=16pt
\oddsidemargin=0.0cm
\evensidemargin=0.0cm
\topmargin=-0.5cm

\setcounter{tocdepth}{1}

\begin{document}
	\title{Pluripotential Chern-Ricci Flows}
	\author{Quang-Tuan Dang}
	\address{Laboratoire de Math\'ematiques d'Orsay, Universit\'e Paris-Saclay, CNRS, 91405, Orsay, France.}
	\curraddr{Institut de Mathematiques de Toulouse,  Universit\'e de Toulouse; CNRS,
118 route de Narbonne, 31400 Toulouse, France}
		\email{quang-tuan.dang@universite-paris-saclay.fr, quang-tuan.dang@math.univ-toulouse.fr}
		\urladdr{https://www.math.univ-toulouse.fr/~qdang/}
	\date{\today}
	\keywords{Parabolic Monge-Amp\`ere equations, Chern-Ricci flows}
	\thanks{This work is partially supported by the ANR project PARAPLUI}
	\subjclass[2020]{53E30, 32U05, 32W20}
	\maketitle
	\begin{abstract} Extending a recent theory developed on compact K\"ahler manifolds by Guedj-Lu-Zeriahi \cite{guedj2018pluripotential,guedj2020pluripotential} and the  author~\cite{dang2021pluripotential},  
		we  define and study  pluripotential  solutions to degenerate parabolic complex Monge-Amp\`ere equations on compact Hermitian manifolds. Under natural assumptions on the Cauchy boundary data, we show that the pluripotential solution is semi-concave in time and continuous in space and that such a solution is unique. 
		We also establish a partial regularity of such solutions under some extra assumptions of the densities and apply it to prove the existence and uniqueness of the weak Chern-Ricci flow on complex compact varieties with log terminal singularities.  
	\end{abstract}	
	
	\tableofcontents
	
	\section*{Introduction}
	
	The Chern-Ricci flow is an evolution equation of Hermitian metrics  by their Chern-Ricci forms. It was first investigated by M. Gill \cite{gill2011convergence} in the setting of complex manifolds with vanishing first Bott-Chern class.  In \cite{tosatti2013chern, tosatti2015evolution, tosatti2015collapsing,zheng2017chern} the Chern-Ricci flow was studied  on more general complex manifolds and 
	 a number of further results were established, several of which are analogous to those for the K\"ahler-Ricci flow. These results provide affirmative evidence that the
Chern–Ricci flow is a natural geometric flow on complex manifolds and that its behavior
reflects the underlying geometry. Alternative  flows of Hermitian metrics  have been previously studied by Streets-Tian~\cite{streets2010parabolic,streets2011hermitian,streets2013regularity}, and also Liu-Yang~\cite{liu2012geometry}, motivated in part by  open classification problems. Following Song-Tian's  program~\cite{song2017kahler},  
our aim in this paper is to establish the existence of  the (weak) Chern-Ricci flow on mildly singular varieties.

	Let $(X,\omega_0)$ be a compact $n$-dimensional Hermitian manifold. The {\it Chern-Ricci flow} $\omega=\omega(t)$ starting at $\omega_0$ is an evolution equation of metrics \begin{align}\label{crf}
	    \frac{\partial}{\partial t}\omega=-{\rm Ric}(\omega), \quad\omega|_{t=0}=\omega_0,
	\end{align}
	where ${\rm Ric}(\omega)$ is the {\em Chern-Ricci form} of $\omega$ associated to the Hermitian metric $g=(g_{i\bar{j}})$, which in local coordinates is given by \[{\rm Ric}(\omega)=-\dc\log\det(g) .\]  In the K\"ahler setting, ${\rm Ric}(\omega)=iR_{j\bar{k}}dz_j\wedge d\bar{z}_k$, where $R_{j\bar{k}}$ is the Chern Ricci curvature of $\omega $. Thus
	if $\omega_0$ is K\"ahler i.e., $d\omega_0=0$, \eqref{crf} coincides with the K\"ahler-Ricci flow. 
		Tosatti and Weinkove \cite[Theorem 1.3]{tosatti2015evolution} showed that there exists a unique maximal solution  to \eqref{crf} on $[0,T)$ for a number $T\in (0,\infty]$ determined by $\omega_0$. 

	Solving the Chern-Ricci flow boils down to solving a parabolic scalar equation modeled on \begin{equation*}
	   \frac{\partial \f}{\partial t}=\log\frac{\det(g_{ij}+\partial_i\partial_{\bar{j}} \f)}{\det(g_{i\bar{j}})},\quad g_{ij}+\partial_i\partial_{\bar{j}} \f>0
	\end{equation*} with initial data $\f(0,x)=0$.
	\subsection*{Parabolic pluripotential method}
    Recently, Guedj-Lu-Zeriahi \cite{guedj2018pluripotential,guedj2020pluripotential} have developed the first steps of a parabolic pluripotential approach both in the local and K\"ahler cases.  This  enabled then to study the behavior of the Kähler–Ricci flow on compact K\"ahler varieties with Kawamata log terminal (klt) singularities, extending works by Cao \cite{cao1985deformation}, Tsuji \cite{tsuji1988existence}, Tian-Zhang \cite{tian2006kahler}, Song-Tian \cite{song2017kahler}, and Eyssidieux-Guedj-Zeriahi~\cite{eyssidieux2016weak,eyssidieux2018convergence}.
	
We remark that any compact complex manifold admits a Hermitian metric but there are many of them which are not K\"ahler.
It is thus desirable  to extend the previous works  \cite{guedj2018pluripotential, guedj2020pluripotential,dang2021pluripotential}
 to the Hermitian setting. It is expected that Hermitian analogues of the K\"ahler-Ricci flow will play an important role in understanding the geometry of compact complex manifolds.

More precisely,  we consider  the following  parabolic complex Monge–Ampère type equation
	\begin{align*}\label{cmaf} \tag{CMAF}
	dt\wedge(\omega_t+\dc\f_t)^n=e^{\partial_t{\f}_t+F(t,x,\f_t)}f(x)dV(x)\wedge dt
	\end{align*}
	in $X_T:=(0,T)\times X$, where $dV$ is a fixed normalized volume form on $X$ and
	\begin{itemize}
	\item $T\in (0,\infty)$, $X$ denotes a compact Hermitian manifold;
		\item $(\omega_t)_{t\in [0,T]}$ is a smooth family of semi-positive forms such that 
		\begin{equation}\label{assum: omega}
	    -A\omega_t\leq \dot{\omega}_t\leq A\omega_t\;\, \text{and}\;\, \ddot{\omega}_t\leq A\omega_t,\; \forall \, t\in [0,T],
	\end{equation} for some fixed constant $A>0$;
	\item for all $t\in [0,T]$, $\theta\leq \omega_t$ for some semi-positive and big $(1,1)$-form $\theta$, i.e. there exists a quasi-plurisubharmonic function $\rho$ with analytic singularities such that $\theta+\dc\rho$ dominates a Hermitian form;
		
		\item $(t,x,r)\mapsto F(t,x,r)$ is continuous on $[0,T]\times X\times \mathbb{R}$, quasi-increasing in $r$, uniformly Lipschitz in $(t,r)$, and uniformly convex in $(t,r)$;
		
		\item $0\leq f\in L^p(X)$ for some  $p>1$, and $f>0$ almost everywhere;
		
		\item $\f:[0,T]\times X\rightarrow \mathbb{R}$ is the unknown function, with $\f_t:=\f(t,\cdot)$. 
	\end{itemize}
	
	Here $d=\partial+\Bar{\partial}$ and $d^c=i(\Bar{\partial}-\partial)/2$ are both real operators, so that $\dc=i\partial\Bar{\partial}$. 
\medskip

Our plan is to extend the pluripotential approach developed by Guedj-Lu-Zeriahi \cite{guedj2020pluripotential} to the Hermitian context. We are going to introduce a notion of pluripotential solutions to~\eqref{cmaf}, a parabolic analogue of the theory developed  by Bedford and Taylor~\cite{bedford1976dirichlet,bedford1982new}.	 
	 The local side of this theory has been developed in \cite{guedj2018pluripotential} by a direct approach, taking advantage of the euclidean structure of $\mathbb{C}^n$. 
	 \medskip 
	 
	The above parabolic equation can be interpreted as a second-order PDE on the ($2n+1$)-dimensional manifold $X_T$:
	\begin{itemize}
	    \item the left-hand side $dt\wedge(\omega_t+\dc\f_t)$ is a well-defined positive Radon measure for the path $t\mapsto\f_t$ of bounded $\omega_t$-plurisubharmonic functions;
	    \item the right-hand side $e^{\dot{\f}_t+F(t,x,\f)}f(x)dV(x)\wedge dt$ is a well-defined positive Radon measure if $t\mapsto\f_t(x)$ is (locally) uniformly Lipschitz.  
 	\end{itemize}
	It is also useful in practice to allow the Lipschitz constant to blow up as $t$ approaches zero, so we introduce the corresponding class $\mathcal{P}(X_T,\omega)$ of \emph{parabolic potentials} (see Definition~\ref{def: pp})
	\medskip
	
	 We approximate the equation~\eqref{cmaf} by smooth parabolic complex Monge-Amp\`ere ones and establish various a priori estimates to prove our first main result:
	\begin{bigthm}
	\label{thmA} { Let $\f_0$ be a bounded $\omega_0$-psh function. Then there exists a  parabolic potential $\f\in\mathcal{P}(X_T,\omega)$ to  \eqref{cmaf} such that
\begin{itemize}
    \item $(t,x)\mapsto\f(t,x)$ is locally bounded in $[0,T)\times X$,
    \item $(t,x)\mapsto\f(t,x)$ is continuous in $(0,T)\times (X\setminus\{\rho=-\infty\})$,
    \item $t\mapsto\f_t$ is locally uniformly semi-concave in $(0,T)\times X$,
    \item $\f_t\to\f_0$ as $t\to 0^+$ in $L^1(X)$ and pointwise.
\end{itemize}}
	\end{bigthm}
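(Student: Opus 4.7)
My plan is a four-step approximation scheme in the spirit of~\cite{guedj2020pluripotential,dang2021pluripotential}: regularize the data, solve smooth parabolic problems, derive a priori estimates uniform in the approximation, and pass to the pluripotential limit.

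\emph{Regularization and smooth solutions.} I would replace $\om_t$ by $\om_t^\e := \om_t + \e\hat{\om}$ for a fixed Hermitian metric $\hat{\om}$, $f$ by a smooth positive $f_\e$ with $f_\e \to f$ in $L^p$, $F$ by smooth $F_\e$ preserving the relevant monotonicity and convexity, and $\f_0$ by a decreasing sequence $\f_0^\e$ of smooth $\om_0^\e$-psh functions obtained via the Hermitian analogue of Demailly's regularization (Blocki--Kolodziej). The Tosatti--Weinkove theory (extending Gill) then yields smooth solutions $\f^\e$ to the smoothed equation on $[0,T]\times X$.

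\emph{Uniform a priori estimates.} Three families of estimates are needed, all uniform in $\e$. (i) An $L^\infty$-bound from above via the maximum principle applied to an exponential auxiliary function, together with a lower bound on $X\setminus\{\rho=-\infty\}$ from the barrier $\d\rho + At$, which exploits bigness of $\theta$. (ii) A semi-concavity bound in time of the form $(1-\lambda t)\ddot{\f}^\e \leq C$ on compact subsets of $(0,T)$, obtained by applying the maximum principle to an auxiliary expression built from $\dot{\f}^\e$, $\f^\e$ and a time cutoff, with the torsion contributions absorbed in the spirit of Cherrier and Tosatti--Weinkove. (iii) Spatial continuity of $\f_t^\e$ on compact subsets of $X\setminus\{\rho=-\infty\}$ at each fixed time, via the Kolodziej/Dinew--Kolodziej/Nguyen $L^\infty$--$L^p$ estimates for Hermitian Monge--Amp\`ere equations, applied after freezing $t$ and using the time-Lipschitz bound to control the right-hand side.

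\emph{Passage to the limit.} Arzel\`a--Ascoli extracts a subsequence converging locally uniformly on $(0,T)\times(X\setminus\{\rho=-\infty\})$ to a candidate $\f \in \mathcal{P}(X_T,\om)$. The local $L^\infty$ bound gives boundedness on $[0,T)\times X$; semi-concavity in time and spatial continuity pass to the limit. Equation~\eqref{cmaf} is preserved at the level of currents because $dt\wedge(\om_t^\e + \dc\f_t^\e)^n$ converges weakly to $dt\wedge(\om_t + \dc\f_t)^n$ by Bedford--Taylor stability for locally uniformly convergent, locally uniformly bounded potentials, while the right-hand side converges thanks to the locally uniform time-Lipschitz bound on $\f^\e$ and the $L^p$-convergence of $f_\e$. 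Attainment of the initial datum in $L^1$ and pointwise as $t\to 0^+$ is obtained by barrier comparison, bracketing $\f^\e_t$ between an upper barrier built from $\f_0^\e$ and a lower barrier involving $\rho$.

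\emph{Main obstacle.} The hardest step is the uniform semi-concavity bound. In the K\"ahler setting the corresponding computation relies crucially on $d\om=0$ to eliminate torsion terms in commutators of covariant derivatives, allowing the maximum-principle argument to close cleanly; in the Hermitian setting, these torsion terms must be absorbed using the two-sided bound $-A\om_t \leq \dot{\om}_t \leq A\om_t$ and the upper bound on $\ddot{\om}_t$, which severely constrains the admissible auxiliary functions. A parallel difficulty is that the $L^\infty$--$L^p$ estimate must be carried out with respect to the semi-positive big form $\theta$ rather than a Hermitian metric; this is where the analytic-singularity potential $\rho$ enters decisively, forcing the spatial-continuity statement to be restricted to $X\setminus\{\rho=-\infty\}$.
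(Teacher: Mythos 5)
Your overall strategy (regularize, solve smooth problems via Tosatti--Weinkove, derive uniform estimates, pass to the limit) is the same as the paper's, and your identification of the semi-concavity estimate as the technical crux is correct. However, there is a genuine gap in the passage to the limit that undoes the argument as written.

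You justify the convergence of the right-hand side $e^{\dot\f^\e_t + F(t,\cdot,\f^\e_t)}f_\e\,dV\wedge dt$ by appealing to ``the locally uniform time-Lipschitz bound on $\f^\e$.'' This does not suffice. A uniform Lipschitz bound in $t$ gives only that the derivatives $\dot\f^\e_t$ are locally uniformly \emph{bounded}; it does not yield any form of pointwise convergence $\dot\f^\e_t\to\dot\f_t$, and $e^{\dot\f^\e_t}$ need not converge to $e^{\dot\f_t}$ as measures. The role of semi-concavity is precisely to repair this: a family of functions $t\mapsto\f^\e(t,x)$ that are uniformly semi-concave in $t$ and converge locally uniformly (or just a.e.) have time derivatives that converge $\ell\otimes\mu$-a.e., by the classical fact that one-sided derivatives of concave functions converge at points of differentiability of the limit. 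This is the content of Theorem~\ref{thm_211} in the paper (i.e.~\cite[Theorem 1.14]{guedj2020pluripotential}), and it is what actually produces the weak convergence
\[ e^{\dot\f^\e_t+F_\e(t,\cdot,\f^\e_t)}f_\e\,dV\wedge dt\longrightarrow e^{\dot\f_t+F(t,\cdot,\f_t)}f\,dV\wedge dt. \]
You correctly flag semi-concavity as the hard estimate to establish in the Hermitian setting, but you never use it where it is indispensable; you only pass it to the limit as a regularity property of $\f$. Without invoking it for the right-hand side, the limit $\f$ is not shown to satisfy~\eqref{cmaf}.

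A secondary issue: your Arzel\`a--Ascoli step presupposes locally uniform equicontinuity of $\{\f^\e\}$ in the spatial variable, which is not established by the estimates you list (the time-Lipschitz bound is one-variable, and the Kolodziej-type $L^\infty$ estimate at fixed $t$ controls the sup norm, not a modulus of continuity). The paper instead extracts an $L^1_{\rm loc}$-convergent subsequence from the compactness of bounded families of parabolic potentials, and then upgrades $L^1$-convergence to locally uniform convergence in $[\e,T']\times X$ via the $L^1$--$L^\infty$ stability estimate of Theorem~\ref{thm: stability} applied to the slice densities $e^{\dot\f^\e_t+F_\e}f_\e$ (Proposition~\ref{stab1}). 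This detour through stability is how one circumvents the lack of an a priori spatial modulus of continuity for the family; you should replace the Arzel\`a--Ascoli step by this stability argument or supply an independent uniform modulus of continuity.
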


 It turns out that  $t\to \f_t(x)-n(t\log t-t)+Ct$ is increasing for some fixed $C>0$. The convergence at time zero is therefore rather strong. For instance, it holds in the sense of capacity, and is even uniform if $\f_0$ is continuous. 
	
	
The semi-concavity property of the solution $\f$ constructed in Theorem~\ref{thmA} is a key ingredient for the approximation process (see \cite[Theorem 1.14]{guedj2020pluripotential}). We prove that it is the unique pluripotential solution with such regularity by establishing the following comparison principle:
	
\begin{bigthm}\label{thmB} { Let $\f\in\mathcal{P}(X_T,\omega)$ (resp. $\psi$) be a bounded pluripotential subsolution (resp. supersolution) to \eqref{cmaf} with initial data $\f_0$ (resp. $\psi_0$). Assume that  $\f$ is continuous in $(0,T)\times (X\setminus \{\rho=-\infty\})$ and $\psi$ is locally uniformly semi-concave in $t$. Then 
\begin{equation*}
    \f_0\leq \psi_0\Longrightarrow\f\leq \psi.
\end{equation*}
In particular, there is a unique pluripotential solution to \eqref{cmaf} which is continuous in $(0,T)\times (X\setminus \{\rho=-\infty\})$ and locally uniformly semi-concave in $t$.}
\end{bigthm}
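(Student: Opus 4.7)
The plan is a perturbation-plus-contradiction argument in the spirit of the Kähler parabolic comparison of \cite{guedj2020pluripotential}, using the bigness of $\theta$ (via the potential $\rho$) to produce strictly positive perturbations of $\varphi$. Write $S := \{\rho = -\infty\}$.

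\textbf{Step 1 (strict subsolution).} For small $\varepsilon>0$ I would set
$$\varphi^\varepsilon_t := (1-\varepsilon)\varphi_t + \varepsilon\rho - \varepsilon C_1 t - \varepsilon C_2/(T-t),$$
with $C_1,C_2$ to be chosen in terms of $\|\varphi\|_\infty$, $\|\psi\|_\infty$, the constant $A$ from \eqref{assum: omega}, and the Lipschitz data of $F$. Since $\theta+\dc\rho$ dominates a Hermitian form and $\theta\le\omega_t$, one obtains
$\omega_t+\dc\varphi^\varepsilon_t\ge (1-\varepsilon)(\omega_t+\dc\varphi_t)+\varepsilon c\,\omega_H\ge \varepsilon c\,\omega_H$ on $X\setminus S$. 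Combining concavity of $A\mapsto(\det A)^{1/n}$ on the positive cone with the quasi-monotonicity and uniform Lipschitz character of $F$, a direct calculation shows $\varphi^\varepsilon$ is a \emph{strict} pluripotential subsolution, the gap appearing as a factor $e^{\eta_\varepsilon}>1$ on the right-hand side of \eqref{cmaf}. The term $-\varepsilon C_2/(T-t)$ forces $\varphi^\varepsilon\to-\infty$ as $t\to T^-$, guaranteeing compactness in time later on.

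\textbf{Step 2 (contradiction at the contact set).} Suppose towards contradiction that $M:=\sup_{X_T}(\varphi^\varepsilon-\psi)>0$. From $\varphi_0\le\psi_0$ and the built-in constants we get $\limsup_{t\to 0^+}(\varphi^\varepsilon-\psi)\le 0$, and $\varphi^\varepsilon\equiv-\infty$ on $S$ and as $t\to T^-$, so the supremum is attained on a compact subset of $(0,T)\times(X\setminus S)$. On the contact set $E_\eta:=\{\varphi^\varepsilon-\psi\ge M-\eta\}$, for small $\eta>0$, I apply a parabolic Hermitian Bedford-Taylor-type comparison,
$$\int_{E_\eta} dt\wedge(\omega_t+\dc\psi_t)^n \;\ge\; \int_{E_\eta} dt\wedge(\omega_t+\dc\varphi^\varepsilon_t)^n.$$
Substitute the super-/strict-sub-solution inequalities, exploit the quasi-monotonicity of $F$ and $\varphi^\varepsilon\ge \psi+M-\eta$ on $E_\eta$, and use the semi-concavity of $\psi$ in $t$, which yields a one-sided upper bound on $\partial_t\psi$ at argmax points via a time sup-convolution together with the continuity of $\varphi^\varepsilon$ off $S$; this produces the reverse integral inequality up to a fixed multiplicative factor $e^{\eta_\varepsilon/2}>1$, contradicting the display above. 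Hence $\varphi^\varepsilon\le\psi$ for every $\varepsilon>0$, and letting $\varepsilon\to 0^+$ gives $\varphi\le\psi$ on $X_T\setminus S$, hence everywhere by upper semi-continuity of $\varphi$ and lower semi-continuity of $\psi$. The uniqueness statement then follows by applying the comparison in both directions, since Theorem~\ref{thmA} produces a solution in exactly the required regularity class.

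\textbf{Main obstacle.} The delicate point is Step~2. The parabolic pluripotential comparison on a Hermitian manifold carries torsion-type error terms coming from $d\omega_t\ne 0$, absent in the Kähler setting of \cite{guedj2020pluripotential}; they must be absorbed into the strict-subsolution gap $\eta_\varepsilon$, which requires a careful accounting of the $\varepsilon$-dependencies of $C_1,C_2$. Moreover, upgrading the integral inequality on $E_\eta$ into a contradiction about the \emph{exponents} $\partial_t\varphi+F(\varphi)$ versus $\partial_t\psi+F(\psi)$ relies essentially on the semi-concavity of $\psi$ in time (which is what controls $\partial_t\psi$ pointwise from above at max points) and on the continuity of $\varphi$ off $S$ (which makes $E_\eta$ open and shrinking in a controlled way to the argmax as $\eta\to 0^+$). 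These are exactly the hypotheses of the theorem, confirming their necessity.
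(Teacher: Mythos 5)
Your Step~2 rests on an ingredient that is not available. The parabolic inequality
\[
\int_{E_\eta} dt\wedge(\omega_t+\dc\psi_t)^n \;\geq\; \int_{E_\eta} dt\wedge(\omega_t+\dc\varphi^\varepsilon_t)^n
\]
over a space-time contact set is not established anywhere in the paper, and in the Hermitian setting it cannot be taken for granted: even the \emph{elliptic} comparison at a fixed time (\cite[Theorem~1.11]{guedj2021quasi}) carries a multiplicative factor $(1-Bs)^n$ whose validity depends on strict positivity of one of the currents, which is precisely why the paper formulates and proves a genuine minimum principle (Proposition~\ref{min-prin}) rather than a domination comparison. Saying the torsion errors ``must be absorbed into the strict-subsolution gap'' names the difficulty without resolving it. The paper's argument is structurally different: it introduces the auxiliary function $w=(1-\lambda)\varphi+\lambda\frac{\rho+\phi}{2}-\psi-2\varepsilon t$, where $\phi$ solves the \emph{elliptic} Monge--Amp\`ere equation $(\theta+\dc\phi)^n=cfdV$ (an input missing from your $\varphi^\varepsilon$), locates the maximum of $w$ at some $(t_0,x_0)$ with $t_0>0$, applies the classical maximum principle \emph{in the time variable only} at that point to get $(1-\lambda)\partial_t\varphi(t_0,\cdot)\geq\partial_t^-\psi(t_0,\cdot)+2\varepsilon$ on a spatial neighborhood $D$ of the contact set (this is exactly where the continuity of $\partial_t\varphi$ off $S$ and the semi-concavity of $\psi$ enter), and then applies Proposition~\ref{min-prin} to the fixed-time slice data $u=(1-\lambda)\varphi(t_0,\cdot)+\lambda\frac{\rho+\phi}{2}$, $v=\psi(t_0,\cdot)$ on $D$ to derive a contradiction. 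No parabolic comparison over a space-time region is ever invoked.

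There is a secondary gap in Step~1: your $\varphi^\varepsilon$ uses only $\rho$, but $\theta+\dc\rho\geq\delta\omega_X$ gives a lower bound against $\omega_X^n$, not against $fdV$, and $f$ may vanish; to deduce a Monge--Amp\`ere density lower bound in terms of $fdV$ via the mixed-type inequality Lemma~\ref{lem: mixed_ineq} you need both constituent currents to dominate exponential multiples of the \emph{same} reference measure $fdV$, which is precisely what the auxiliary potential $\phi$ supplies. Moreover, since $\varphi^\varepsilon\to-\infty$ near $S$ while $F$ is only quasi-increasing, the term $F(t,x,\varphi^\varepsilon)$ on the right-hand side of the subsolution inequality needs more care than ``a direct calculation shows'' near $S$.
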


\smallskip
We let $\Phi(\omega_t,F,f,\f_0)$ denote the unique solution to \eqref{cmaf} with given data $(\omega_t,F,f,\f_0)$ as in Theorem~\ref{thmB}. This comparison principle also allows us to  establish the following stability result:
\begin{bigthm}
\label{thmC}{Assume that $(\omega_t,F,f,\f_{0})$ and $(\omega_{t,j},F_j,f_j,\f_{0,j})$ satisfy the assumptions above with uniform constants independently of $j$, and
\begin{itemize}
\item $\omega_{t,j}$ are smooth Hermitian forms converging uniformly to $\omega_t$,
    \item $F_j$ uniformly converge  to $F$ with uniform constants,
    \item $f_j$ are densities which converge in $L^p(X)$ to $f$,
    \item $(\f_{0,j})$ is a sequence of  (smooth) bounded $\omega_0$-psh functions converging in $L^1(X)$ towards $\f_0\in \PSH(X,\omega_0)\cap L^\infty(X)$.
\end{itemize}
Then $\Phi(\omega_{t,j},F_j,f_j,\f_{0,j})$ locally uniformly converge  to $\Phi(\omega_{t},F,f,\f_{0})$.}
\end{bigthm}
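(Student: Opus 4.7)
The plan is to combine uniform a priori estimates for the approximating solutions with the comparison principle of Theorem~\ref{thmB}. Since every hypothesis on $(\omega_{t,j},F_j,f_j,\f_{0,j})$ is imposed with constants independent of $j$, applying the construction of Theorem~\ref{thmA} to each approximating datum produces solutions $\f_j:=\Phi(\omega_{t,j},F_j,f_j,\f_{0,j})$ that are uniformly bounded on compact subsets of $[0,T)\times X$, uniformly semi-concave in $t$ on compact subsets of $(0,T)\times X$, and such that $t\mapsto\f_{j,t}(x)-n(t\log t-t)+Ct$ is non-decreasing with $C$ independent of $j$.

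These uniform estimates let me extract from any subsequence a further subsequence converging in $L^1_{\loc}$ (and almost everywhere) to a limit $\f^{\star}$. The time-monotonicity and semi-concavity pass to the limit; together with the uniform convergence $\omega_{t,j}\to\omega_t$ these guarantee that $\f^{\star}_t\in\PSH(X,\omega_t)\cap L^\infty$ and has the same slice regularity as each $\f_{j,t}$. The initial condition $\f^{\star}_0=\f_0$ is then recovered from the monotonicity property together with the $L^1$-convergence $\f_{0,j}\to\f_0$.

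The core step is to check that $\f^{\star}$ solves \eqref{cmaf} in the pluripotential sense. For the right-hand side, $\dot\f_{j,t}$ converges almost everywhere by the uniform semi-concavity and monotonicity in $t$; combined with the uniform convergence of $F_j$ and the $L^p$-convergence of $f_j$, dominated convergence yields the weak convergence of $e^{\dot\f_{j,t}+F_j(t,x,\f_{j,t})}f_j\, dV\wedge dt$ to the expected limit. For the left-hand side, the weak convergence $dt\wedge(\omega_{t,j}+\dc\f_{j,t})^n\to dt\wedge(\omega_t+\dc\f^{\star}_t)^n$ follows from a Hermitian Bedford-Taylor continuity statement applied locally, exploiting the uniform convergence of $\omega_{t,j}$, the uniform $L^\infty$-bound on $\f_{j,t}$, and a Hartogs-type convergence along each time slice. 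The limit $\f^{\star}$ then meets both the subsolution and supersolution hypotheses of Theorem~\ref{thmB}; applying that theorem against $\Phi(\omega_t,F,f,\f_0)$ in each direction forces $\f^{\star}=\Phi(\omega_t,F,f,\f_0)$, and uniqueness of the cluster point promotes the full sequence $(\f_j)$ to converge in $L^1_{\loc}$.

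To upgrade $L^1_{\loc}$-convergence to local uniform convergence on $(0,T)\times(X\setminus\{\rho=-\infty\})$, I would exploit the continuity of the limit from Theorem~\ref{thmA} together with a pluripotential Hartogs-type lemma: uniformly bounded $\omega_t$-psh functions converging in $L^1$ to a continuous $\omega_t$-psh function converge uniformly on compact subsets away from $\{\rho=-\infty\}$, and the uniform semi-concavity in $t$ promotes this slice-wise uniform convergence to joint local uniform convergence. The main obstacle I anticipate is the Monge-Amp\`ere continuity step: in the Hermitian regime one cannot rely on global integration by parts or Stokes, so I expect to need a local argument that tolerates the simultaneous perturbation of both the background forms $\omega_{t,j}$ and the potentials $\f_{j,t}$, combining a Bedford-Taylor continuity principle for Hermitian backgrounds with the uniform capacity estimates underlying the proof of Theorem~\ref{thmA}.
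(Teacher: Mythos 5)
Your plan is genuinely different from the paper's. The paper's proof is very short and relies on the quantitative $L^1$-to-$L^\infty$ stability estimate of Proposition~\ref{prop: stability}: set $\Phi^j=\Phi(\omega_{t,j},F_j,f_j,\f_{0,j})$, $\Phi=\Phi(\omega_t,F,f,\f_0)$, observe that Proposition~\ref{prop: stability} bounds $\sup_{[\varepsilon,T')\times X}|\Phi^j-\Phi|$ in terms of $\|\Phi^j_\varepsilon-\Phi_\varepsilon\|_{L^1(X)}^\alpha$, $\sup(F-F_j)_+$ and $\|f-f_j\|_p^{1/n}$, and then reduce to the $L^1$-convergence of $\Phi^j$ on a strip, which comes from the compactness statement of Theorem~\ref{thm_211} via~\cite[Lemma~1.8]{guedj2018pluripotential}. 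The stability estimate does the hard work of converting $L^1$ into $L^\infty$ convergence directly, and one never needs to pass to the limit in the Monge-Amp\`ere operator against only $L^1_{\loc}$-convergent data.

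You instead propose a soft compactness argument: extract a cluster point $\f^\star$ in $L^1_{\loc}$, show it solves \eqref{cmaf}, identify it with $\Phi$ by the comparison principle, then upgrade to local uniform convergence via a Hartogs-type lemma. The step you (rightly) flag is precisely where this route breaks down: in order to show that $\f^\star$ is a pluripotential solution you need $dt\wedge(\omega_{t,j}+\dc\f^j_t)^n\to dt\wedge(\omega_t+\dc\f^\star_t)^n$, but Bedford-Taylor convergence of Monge-Amp\`ere measures does \emph{not} hold under mere $L^1_{\loc}$ convergence of bounded potentials; it holds for monotone limits (Proposition~\ref{lem: convergence}) or for locally uniform/capacity convergence, none of which you have at that stage. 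The Hartogs-type upgrade you invoke would require knowing already that the limit is continuous, but continuity of $\f^\star$ on $(0,T)\times\Omega$ follows (Proposition~\ref{prop: continuous}) only once you know $\f^\star$ is a supersolution — which is the very thing the Monge-Amp\`ere convergence is supposed to deliver. So as written there is a circularity. If you want to keep the soft-compactness route, the missing ingredient is precisely a quantitative stability input: either Proposition~\ref{stab1} (for smooth data along the approximating flows, as the paper uses in the proof of Theorem~\ref{thm: existence}) or Proposition~\ref{prop: stability} (for weak solutions) must be invoked to show that the $L^1_{\loc}$ limit is actually locally uniform \emph{before} touching the Monge-Amp\`ere side — after which the Bedford-Taylor step becomes routine and the comparison principle identifies the limit. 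In other words, the stability estimate is not an optional shortcut; it is the mechanism that removes the exact obstruction you anticipate.
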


We then move on to study higher regularity properties of such solutions under some extra assumptions.  In this context we prove the following :

\begin{bigthm}\label{thmD} Given the initial data $\omega_t,F,f,\f_0$ as above,
assume moreover $F$ to be smooth in $[0,T]\times X\times\mathbb{R}$. Assume also that $f=e^{\psi^+-\psi^-}$ with \begin{itemize}
    \item $\psi^{\pm}$ are quasi-psh functions on $X$, 
    \item $\sup_X\psi^\pm\leq C$, and $\|e^{\psi^-}\|_{L^p}\leq C$ for some constant $C>0$,
    \item $\psi^{\pm}$ are (smooth) locally bounded in a  Zariski open subset $U$ of $\Omega$.
\end{itemize} 
The unique pluripotential solution $\Phi(f,F,\omega_t,\f_0)$ to~\eqref{cmaf}   is smooth on $(0,T)\times U$.
\end{bigthm}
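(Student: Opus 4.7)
The plan is to obtain the smoothness of $\Phi:=\Phi(\omega_t,F,f,\f_0)$ on $(0,T)\times U$ by a regularization-plus-a-priori-estimates argument, patterned on the Kähler case but using the Hermitian parabolic Monge-Ampère estimates available from Tosatti-Weinkove's work on the Chern-Ricci flow.

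First I would produce a smooth approximation of the data. By Demailly regularization applied to $\psi^\pm$ on $X$, one gets smooth quasi-psh approximants $\psi^\pm_\e\searrow\psi^\pm$ with uniform upper bounds and with $e^{\psi^-_\e}$ bounded in $L^p(X)$. Setting $f_\e=e^{\psi^+_\e-\psi^-_\e}$ yields smooth positive densities on $X$, locally smooth on $U$ with uniform bounds on any compact $K\Subset U$. One also smooths $\f_0$ to $\f_{0,\e}\in C^\infty(X)\cap \PSH(X,\omega_0)$ and (if needed) slightly perturbs $\omega_t$ to stay strictly positive; by the quasi-increasing and convexity/Lipschitz hypotheses, $F$ is already smooth. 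The classical existence theory for the Chern-Ricci flow / Hermitian parabolic complex Monge-Ampère equation (Gill, Tosatti-Weinkove, Zheng) then gives a smooth solution $\f^\e\in C^\infty([0,T_\e)\times X)$ of the approximate (CMAF), and the uniform $L^\infty$ a priori bounds established earlier in the paper show that $T_\e=T$ and that $\|\f^\e\|_{L^\infty([0,T]\times X)}$ is controlled independently of $\e$. By Theorem~\ref{thmC}, $\f^\e$ converges locally uniformly to $\Phi$.

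The core step is then to prove that on every compact set $K\Subset (0,T)\times U$ one has uniform $C^k$-bounds on $\f^\e$ for all $k\geq 0$. I would proceed in the standard order. For the $C^0$ estimate, local uniform convergence provides it for free. For the local $C^2$ estimate, I would use a Laplacian estimate of Aubin-Yau type adapted to the Hermitian parabolic setting (as in Tosatti-Weinkove's Chern-Ricci flow papers), applied to $G=\log\tr_{\om_t}(\om_t+\dc\f^\e)-A\f^\e+B\psi^-_\e-\eta$, where $\eta$ is a space-time cut-off supported in a neighborhood of $K$ and $A,B$ are large constants chosen to absorb torsion terms coming from $d\om_t\ne 0$ and bad terms involving $\partial\psi^\pm_\e$. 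Because $\psi^\pm_\e$ are uniformly bounded in $C^k$ on compact subsets of $U$, at an interior maximum of $G$ on the support of $\eta$ one gets an inequality forcing $\tr_{\om_t}(\om_t+\dc\f^\e)$ to be bounded, hence $\f^\e\in C^{1,1}_{\mathrm{loc}}$ on $K$ uniformly in $\e$. Combining with the already-known uniform Lipschitz bound in $t$ (from the semi-concavity in Theorem~\ref{thmA}) yields a uniform local parabolic $C^2$ bound.

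Once the equation is uniformly parabolic on $K$ with uniformly bounded coefficients, Evans-Krylov theory for parabolic fully nonlinear equations produces a uniform $C^{2,\alpha}$ bound, after which a standard Schauder bootstrap using the smoothness of $F$ and of $f_\e$ on compact subsets of $U$ gives uniform $C^k$ bounds for every $k$. Passing to the limit $\e\to 0$ and invoking uniqueness of $\Phi$ yields $\Phi\in C^\infty((0,T)\times U)$. The main obstacle I anticipate is the local $C^2$ estimate: the Hermitian setting produces torsion terms (involving $\partial\om_t$ and $\bar\partial\om_t$) that do not cancel as in the Kähler case, and the density is only controlled on $U$, so one must design the auxiliary function and the cut-off carefully so that the bad torsion terms and the $\dc\psi^\pm_\e$ contributions are absorbed by the good second-order terms from $A\f^\e$ and by the cut-off; this is the technical heart of the argument and is where the assumption that $\psi^\pm$ be locally bounded on $U$ (together with the $L^p$-bound on $e^{\psi^-}$ used in the global $L^\infty$-estimate) is essential.
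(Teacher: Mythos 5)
Your overall architecture matches the paper: Demailly-regularize $\psi^\pm$ and $\f_0$, perturb $\omega_t$ to $\omega_t^j=\omega_t+\varepsilon_j\omega_X$, solve the smooth approximating flows (Tosatti--Weinkove, To), prove uniform local a priori estimates, then run Evans--Krylov and Schauder and pass to the limit. The gap is in the Laplacian estimate, which is the technical heart you correctly flag but do not actually resolve. You propose a space-time cut-off $\eta$ in the test function $G=\log\tr_{\omega_t}(\omega_t+\dc\f^\e)-A\f^\e+B\psi^-_\e-\eta$; this does not go through as stated. Applying $\Delta_{\tilde\omega_t}=\tr_{\tilde\omega_t}\dc$ to the cut-off produces a term $\tr_{\tilde\omega_t}(\dc\eta)$ with no fixed sign, weighted by $\tilde g^{i\bar j}$ — exactly the quantity you are trying to control and which may degenerate near the support of $\partial\eta$ where the density is singular. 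In the elliptic K\"ahler case this can sometimes be rescued by raising the cut-off to a power (Blocki's local estimate), but that device needs a positive lower bound on the density or other structure unavailable here.

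The paper circumvents the issue entirely by replacing the cut-off with a \emph{barrier}: a $\theta$-psh function $\rho_U$ with analytic singularities along $\partial U$ and $\theta+\dc\rho_U\geq 2\delta\omega_X$. Setting $u_t=\f_{t+\e}-\rho_U-\delta\psi^-+C_0+\delta C$ and
\[
H(t,\cdot)=t\log\tr_{\omega_X}(\tilde\omega_t)-\gamma(u_t)+\frac{1}{\f_{t+\e}+C_0},\qquad \gamma(s)=Bs-\frac{1}{s},
\]
one gets, since $\rho_U\to-\infty$ on $\partial U$, that $H$ attains its maximum in the interior of $[0,T]\times U$ with no cut-off at all. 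Crucially the barrier contributes a \emph{signed} second-order term: because $\theta+\dc\rho_U\geq 2\delta\omega_X$,
\[
\gamma'(u)\,\Delta_{\tilde\omega_t} u \;\leq\; \gamma'(u)\bigl(n-\delta\tr_{\tilde\omega_t}(2\omega_X+\dc\psi^-)\bigr),
\]
which produces the negative $-\tr_{\tilde\omega_t}(\omega_X)$ needed to absorb the torsion terms (coming from $d\omega_t\neq 0$) and the $\Delta\psi^-$, $\Delta F$ contributions. Your cut-off provides no such signed contribution. The $t$-weight in front of $\log\tr$ and the reciprocal term $1/(\f_{t+\e}+C_0)$ are further devices to deal with the time-zero singularity and lower-order terms; but the essential missing idea in your proposal is the barrier $\rho_U$ in place of the cut-off.
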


This result can be seen as a generalization of the main result of~\cite{tosatti2015evolution,to2018regularizing}.
It encompasses the case of smooth parabolic Monge-Amp\`ere equations on mildly singular compact hermitian varieties, as well as more degenerate settings, hermitian
 analogues of the main results of~\cite{song2017kahler,boucksom2013regularizing}.

\medskip

We finally apply our results to study the Chern-Ricci flow on mildly singular varieties. 
 We can, in particular, define a good notion of  weak Chern-Ricci flow on varieties with log terminal singularities (and more generally on klt pairs); see Section~\ref{sect: notionCRF} for a precise definition.   Our main results extend to this context as follows:

\begin{bigthm}
\label{thmE} Let $Y$ be a  compact complex variety with log terminal singularities. Assume that $\theta_0$ is a Hermitian metric such that 
    \begin{equation*}
        T_{\rm max}:=\sup \{t>0:\, \exists\; \chi\in\mathcal{C}^\infty(Y) \,\text{such that}\; \theta_0-t\textrm{Ric}(\theta_0)+\dc\chi >0 \}>0.
    \end{equation*} If $S_0=\theta_0+\dc\phi_0$ is a positive (1,1)-current with  bounded potential $\phi_0$, then there exists a unique solution $\omega$ of the weak Chern-Ricci flow~\eqref{crf} starting with $S_0$ for $t\in [0,T_{\rm max})$.
\end{bigthm}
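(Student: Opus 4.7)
The plan is to lift the Chern--Ricci flow on the singular variety $Y$ to a parabolic complex Monge--Amp\`ere equation of the form \eqref{cmaf} on a log resolution of singularities, and then to invoke Theorems~\ref{thmA}, \ref{thmB} and~\ref{thmD}. Choose a log resolution $\pi\colon X\to Y$. Since $Y$ is log terminal, $K_X=\pi^*K_Y+\sum a_iE_i$ with $a_i>-1$, so a local volume form on $Y$ adapted to $K_Y$ pulls back to $f\,dV$ on $X$ with $f=e^{\psi^+-\psi^-}$, where $\psi^{\pm}$ are quasi-psh with analytic singularities contained in $E:=\mathrm{Exc}(\pi)\cup\pi^{-1}(Y_{\mathrm{sing}})$; the klt condition yields $f\in L^p(X)$ for some $p>1$, and $\psi^{\pm}$ are smooth on the Zariski open subset $U:=X\setminus E$.

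Fix $T<T_{\max}$ and $\chi\in\mathcal{C}^\infty(Y)$ with $\theta_0-T\,\textrm{Ric}(\theta_0)+\dc\chi>0$, and set
$\theta_t:=(1-t/T)\theta_0+(t/T)\bigl(\theta_0-T\,\textrm{Ric}(\theta_0)+\dc\chi\bigr)$, a convex combination of two positive Hermitian forms on $Y$. Put $\omega_t:=\pi^*\theta_t$. Then $(\omega_t)$ is a smooth family of semi-positive $(1,1)$-forms on $X$ satisfying~\eqref{assum: omega} (since $\pm\dot\theta_t$ and $\ddot\theta_t$ are controlled by the Hermitian form $\theta_t$ on the compact $Y$) and dominating $\theta:=\pi^*\theta_0$, which is semi-positive and big on $X$. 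Writing the sought flow on $Y$ as $\omega(t)=\theta_t+\dc\phi_t$ and $\f_t:=\phi_t\circ\pi$, the Chern--Ricci equation pulls back to
\begin{equation*}
  dt\wedge(\omega_t+\dc\f_t)^n=e^{\partial_t\f_t+F(t,x)}f\,dV\wedge dt\qquad\text{on }X_T,
\end{equation*}
with a smooth source $F=F(t,x)$ arising from the ratio of $dV$ to the pull-back of the canonical measure, and initial datum $\f_0=\phi_0\circ\pi$, which is bounded and $\pi^*\theta_0$-psh. All standing hypotheses of \eqref{cmaf} are thus met.

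Theorem~\ref{thmA} produces a parabolic potential $\f\in\mathcal{P}(X_T,\omega)$ solving this (CMAF), and Theorem~\ref{thmD} upgrades it to $\mathcal{C}^\infty$ on $(0,T)\times U$. The current $\omega_t+\dc\f_t$ is then $\pi$-pulled back from a smooth positive $(1,1)$-form on $(0,T)\times Y_{\mathrm{reg}}$ evolving by the classical Chern--Ricci equation, and extends globally to a positive $(1,1)$-current on $Y$ with bounded local potentials starting from $S_0$. Exhausting $T\nearrow T_{\max}$ yields the flow on $[0,T_{\max})$. For uniqueness, any candidate weak Chern--Ricci flow on $Y$ with the same initial datum lifts to a bounded pluripotential sub-/supersolution of \eqref{cmaf} on $X$ of the regularity class required by Theorem~\ref{thmB}; the comparison principle forces equality on $X$, hence on $Y$, independently of the resolution (any two resolutions are dominated by a common third, and pull-back commutes with the PDE).

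The main technical hurdle lies in the reduction step: identifying the data $(\omega_t,F,f,\f_0)$ on $X$ so that \eqref{cmaf} encodes the geometric Chern--Ricci flow on $Y$ while simultaneously verifying $f=e^{\psi^+-\psi^-}\in L^p$ with $\psi^{\pm}$ smooth on $U$ (needed for Theorem~\ref{thmD}), the derivative bounds~\eqref{assum: omega} for $\omega_t$, and the existence of a quasi-psh function with analytic singularities certifying the bigness of $\theta=\pi^*\theta_0$ in the Hermitian (non-K\"ahler) setting. Once this translation is in place, existence, smoothness on the regular part, and uniqueness all follow directly from Theorems~\ref{thmA}, \ref{thmB} and~\ref{thmD}.
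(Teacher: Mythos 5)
Your proposal follows essentially the same strategy as the paper's proof: reduce to a log resolution $\pi\colon X\to Y$, identify the degenerate density $f=e^{\psi^+-\psi^-}$ with $\psi^\pm$ quasi-psh and smooth on $X\setminus\mathrm{Exc}(\pi)$, observe $\pi^*\theta_0$ is semi-positive and big, and then cite Theorems~\ref{thmA}, \ref{thmB}, \ref{thmD} before pushing the solution down to $Y$. The only cosmetic difference is the parametrization of the background family: the paper takes $\hat\theta_t=\theta_0+t\chi$ with $\chi$ a smooth representative of $c_1^{\mathrm{BC}}(K_Y)$ chosen so the path stays Hermitian for $t\in[0,T]$, whereas you take the equivalent convex combination $(1-t/T)\theta_0+(t/T)(\theta_0-T\mathrm{Ric}(\theta_0)+\dc\chi)$; likewise, the paper absorbs the smooth correction into the adapted measure $\mu_{Y,h}$ (so $F\equiv 0$), while you keep a fixed $dV$ and place the smooth correction in $F(t,x)$. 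Both are straightforward reparametrizations and neither affects the verification of the standing hypotheses, so the proofs are in substance identical.
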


The definition of the weak Chern-Ricci flow is given in Section~\ref{sect: notionCRF}. 
Theorem~\ref{thmE} shows that we can start the Chern-Ricci flow from a positive current with bounded potentials. The weak Chern-Ricci flow smoothes out the initial current in the sense that the flow becomes smooth on the nonsingular part of $Y$ once $t>0$ and the evolving metrics always admit bounded local potentials for any $t\in [0,T_{\rm max})$. In particular, the smoothing property of the Chern-Ricci flow holds when $Y$ is a compact complex manifold (see Section~\ref{sect: smooth} or \cite{tosatti2015evolution, to2018regularizing}).

\subsection*{Organization of paper} In Section~\ref{sect: pp} we construct the class of parabolic potentials and define parabolic complex Monge-Amp\`ere operators. We establish  a priori estimates in Section~\ref{sect: estimate}, which will be used to prove Theorem~\ref{thmA} in Section~\ref{sect: existence}. While, Theorem~\ref{thmB} and Theorem~\ref{thmC} will be proved in Section~\ref{sect: unique_stable}, by establishing uniqueness and stability of pluripotential solutions with time regularity. In Section~\ref{sect: smooth}, we establish the smoothness of the pluripotential solutions, constructed in the previous sections, outside an analytic subvariety under some extra assumptions, proving Theorem~\ref{thmD}. In Section~\ref{sect: crf_lt} we apply these tools to prove the existence and uniqueness for the weak Chern-Ricci flow on compact complex varieties with log terminal singularities, proving Theorem~\ref{thmE}.

\subsection*{Acknowledgement} The author would like to thank his advisors Vincent Guedj and Chinh H. Lu for their help and useful discussions, as well as Tât-Dat Tô for his comments on a first draft.

\section{Preliminaries}
Throughout this article, we let $X$ denote a compact complex manifold of complex dimension $n\geq 1$. We always denote by $\omega_X$ a Hermitian metric on $X$.

\subsection{Recap on elliptic pluripotential theory}
\subsubsection{Big forms} 

We fix $\theta$ a smooth semi-positive (1,1) form on $X$.
Recall that a function  is  quasi-plurisubharmonic function (quasi--psh for short) if it is locally given as the sum of a smooth and a plurisubharmonic function.

\begin{definition}
   A quasi-psh function $u:X\rightarrow [-\infty,+\infty)$ is called $\theta$-plurisubharmonic ($\theta$--psh for short)  if it
   satisfies
$$\theta+\dc u\geq 0$$ in the weak sense of currents. Here $d=\partial+\Bar{\partial}$ and $d^c=\frac{i}{2}(\Bar{\partial}-\partial)$ are both real operators, so that $\dc =i\partial\Bar{\partial}$. 
   We let $\PSH(X,\theta)$ denote the set of all $\theta$-psh  functions which are not identically $-\infty$.
\end{definition}
 
In this paper, we consider the equivalence relation of real (1,1) forms (or currents) on $X$:
\begin{equation*}
    \theta\sim\theta'\Longleftrightarrow\theta=\theta'+\dc \chi \;\;\text{for some function}\;\chi\in\mathcal{C}^\infty(X).
\end{equation*}
We denote by $\{\theta\}$ the {\em equivalence class} of  $\theta$. 

\begin{definition}
  We say that a smooth real (1,1) form $\theta$ is \emph{big} if its equivalence class contains a positive Hermitian current; i.e. there exists a positive (1,1) current $T\in\{\theta\}$ such that $T\geq \delta\omega_X$ for some small constant $\delta>0$.
\end{definition}
It  follows  from  an  approximation  result  of  Demailly~\cite{demailly1992regularization} that  one  can weakly  approximate  a  Hermitian  current  by  the ones  with  analytic singularities. 
\smallskip

A basic example the we are going to consider in Section~\ref{sect: crf_lt} is the following: if $Y$ is a compact complex space endowed with a Hermitian form $\omega_Y$, and $\pi:X\to Y$ is a resolution of singularities, then $\theta=\pi^*\omega_Y$ is big as follows from classical arguments; see e.g.~\cite[Proposition 3.2]{fino2009blow}. Moreover, we can find    a $\theta$-psh function $\rho$ with analytic singularities such that $\theta+\dc\rho\geq \delta\omega_X$, and
\begin{equation*}
    \{\rho>-\infty\}=X\setminus \textrm{Exc}(\pi)=\pi^{-1}(Y_{\rm reg})\simeq Y_{\rm reg}.
\end{equation*}


\subsubsection{Monge-Amp\`ere operators.} Throughout the article, we let $\theta$ denote a smooth real semi-positive and big (1,1) form.

An adaptation of~\cite{bedford1982new} to the Hermitian context allows us to define the complex Monge-Amp\`ere operator $(\theta+\dc u)^n$ for any $\theta$--psh function $u$ which is bounded. We refer the reader to~\cite{dinew2012pluripotential,kolodziej2015weak,guedj2021quasi} for more details.

The mixed Monge-Amp\`ere operator $(\theta_1+\dc u_1)^j\wedge (\theta_2+\dc u_2)^{n-j}$
 are also well-defined for any $0\leq j\leq n$, and any bounded $\theta_i$-psh functions $u_i$, for $i=1,2$; We re call
 the following mixed type inequality which will be used in the sequel.
\begin{lemma}\label{lem: mixed_ineq}
    Let $\theta_1$, $\theta_2$ be a semi-positive (1,1)-forms. Let $u_1\in\PSH(X,\theta_1)\cap L^\infty(X)$ and $u_2\in\PSH(X,\theta_2)\cap L^\infty(X)$ be  such that
    \begin{equation*}
        (\theta_1+\dc u_1)^n\geq e^{f_1} \mu,\quad (\theta_2+\dc u_2)^n\geq e^{f_2}\mu,
    \end{equation*} where $f,f_2$ are bounded measurable functions and $\mu$ is a positive Radon measure with $L^1$ density with respect to Lebesgue measure. Then for any $\delta\in(0,1)$,
    \begin{equation*}
        \left(\delta(\theta_1+\dc u_1)+(1-\delta)(\theta_2+\dc u_2)\right)^n\geq e^{\delta f_1+(1-\delta)f_2}\mu.
    \end{equation*}
\end{lemma}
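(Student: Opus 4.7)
The plan is to reduce the statement to a pointwise determinant inequality and then promote it to the weak setting by smooth approximation. The pointwise input is the concavity of $\log\det$ on the cone of positive Hermitian matrices: for any positive Hermitian $n\times n$ matrices $A,B$ and $\delta\in(0,1)$,
\[
\det\bigl(\delta A+(1-\delta)B\bigr)\geq (\det A)^{\delta}(\det B)^{1-\delta}.
\]
Translated to $(1,1)$-forms, if $\eta_1,\eta_2$ are smooth and strictly positive, then $\bigl(\delta\eta_1+(1-\delta)\eta_2\bigr)^n\geq (\eta_1^n)^{\delta}(\eta_2^n)^{1-\delta}$ pointwise. In the smooth case of the lemma, writing $\mu=h\,dV$ with $h\in L^1$, the hypothesis becomes a pointwise bound on densities, and applying the inequality to $\eta_i=\theta_i+\dc u_i$, combined with $h^{\delta}h^{1-\delta}=h$, yields the conclusion at once.

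For the general bounded case, I would use Demailly's regularization adapted to the Hermitian setting (cf.~\cite{dinew2012pluripotential,kolodziej2015weak}) to find decreasing sequences of smooth functions $u_i^{(k)}\searrow u_i$ with $\theta_i+\dc u_i^{(k)}\geq -\varepsilon_k\omega_X$ for some $\varepsilon_k\downarrow 0$, and with uniform $L^{\infty}$ bounds. Setting $\theta_i^{(k)}:=\theta_i+\varepsilon_k\omega_X$ and adding a further small perturbation by $\omega_X$ if needed, the forms $\eta_i^{(k)}:=\theta_i^{(k)}+\dc u_i^{(k)}$ become smooth and strictly positive, and the smooth case gives
\[
\bigl(\delta\eta_1^{(k)}+(1-\delta)\eta_2^{(k)}\bigr)^n\geq e^{\delta f_1^{(k)}+(1-\delta)f_2^{(k)}}\mu,
\]
where the $f_i^{(k)}$ are the smooth densities determined by $(\eta_i^{(k)})^n=e^{f_i^{(k)}}\mu$.

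The last step is to pass to the limit $k\to\infty$. Continuity of the mixed Monge-Amp\`ere operator along uniformly bounded decreasing sequences of quasi-psh functions in the Hermitian Bedford-Taylor theory (see~\cite{kolodziej2015weak,guedj2021quasi}) gives weak convergence of both sides as positive Radon measures on $X$. The main obstacle is that weak convergence does not by itself respect exponential density lower bounds: one must verify $\liminf_k f_i^{(k)}\geq f_i$ almost everywhere with respect to $\mu$. I would handle this by working in the plurifine topology with quasi-continuous representatives of $u_i$ and $u_i^{(k)}$, using the uniform $L^{\infty}$ bounds on the approximants to control the Bedford-Taylor capacities of the relevant super-level sets. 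This transfers the lower bound to the limit and completes the proof, in the same spirit as the Kähler analogue.
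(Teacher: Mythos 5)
Your smooth-case argument is correct: for smooth strictly positive $(1,1)$-forms the inequality $\bigl(\delta\eta_1+(1-\delta)\eta_2\bigr)^n\geq(\eta_1^n)^{\delta}(\eta_2^n)^{1-\delta}$ is exactly the concavity of $\log\det$ on the cone of positive Hermitian matrices, and since $\mu$ is absolutely continuous the hypothesis becomes a pointwise density bound, which immediately yields the conclusion in that case. Note that the paper itself does not reprove this lemma; it simply cites Nguyen, \cite[Lemma 1.9]{nguyen2016complex}, whose argument is in turn a Hermitian adaptation of Dinew's mixed Monge--Amp\`ere inequality.

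The genuine gap is in the passage to the limit, and I think you have partially misdiagnosed where it lies. You propose to use Demailly regularization to produce decreasing smooth approximants $u_i^{(k)}\searrow u_i$ with small negativity loss, define $f_i^{(k)}$ by $(\eta_i^{(k)})^n=e^{f_i^{(k)}}\mu$, and then hope to prove $\liminf_k f_i^{(k)}\geq f_i$ $\mu$-a.e. But Demailly's theorem gives you $\theta$-psh approximants and nothing more: it provides \emph{no} control whatsoever on the Monge--Amp\`ere measures of the $u_i^{(k)}$, so there is no reason the densities $f_i^{(k)}$ should converge, or even be bounded below, in terms of $f_i$. Local convolution is better behaved here (Jensen plus concavity of $\log\det$ shows that convolution preserves a lower bound on the density when the Monge--Amp\`ere measure is itself absolutely continuous with an a.e.-defined density), but when $(\theta_i+\dc u_i)^n$ is only known to dominate $e^{f_i}\mu$ as measures, with a possibly singular part, the Jensen argument breaks down as stated. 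The concluding appeal to plurifine topology, quasi-continuous representatives, and capacity estimates is the right \emph{genre} of tool, but what is actually required is the localization argument of Dinew--Kolodziej as adapted by Nguyen: one reduces by quasi-continuity and capacity estimates to the case where the densities are bounded and nearly constant on small pieces, and then compares with explicit model solutions of the Monge--Amp\`ere equation on small balls via the local comparison principle. This is substantially more than transferring a pointwise $\liminf$ from the approximants, and is precisely the content of the cited lemma. As written, the proposal asserts the key inequality without establishing it.
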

\begin{proof}
See~\cite[Lemma 1.9]{nguyen2016complex}.
\end{proof}

\subsubsection{Minimum principle}
We will also need the following version of the minimum principle inspired by the one in the local setting; see~\cite[Theorem A]{bedford1976dirichlet}. This somehow generalizes the one established by Kolodziej-Nguyen~\cite[Proposition 2.5]{kolodziej2019stability}, assuming that the  form is merely big. 

Since $\theta$ is big we can find  a $\theta$-psh function $\rho$  with analytic singularities such that  $\theta+\dc\rho\geq \delta\omega_X$ for some small $\delta>0$. We set $\Omega:=\{\rho>-\infty\}$.
	\begin{proposition}
	\label{min-prin}
	    Let $D\subset\subset \Omega$ be a nonempty open set. Fix $u\in\PSH(X,\theta)\cap L^\infty_{\rm loc}(\Omega)$ satisfying $\theta+\dc u\geq \varepsilon_0\omega_X$ for some $\varepsilon_0>0$.  
	    Let $v\in\PSH(X,\theta)\cap L^\infty(X)\cap\mathcal{C}^0(\Omega)$ be such that \begin{equation} (\theta+\dc v)^n\leq c(\theta+\dc u)^n\quad\text{on}\; D,
	    \end{equation} for some $c\in[0,1)$. Then 
	    \begin{equation*}
	        \min_{\bar{D}} (v-u)=\min_{\partial D} (v-u).
	    \end{equation*}
	\end{proposition}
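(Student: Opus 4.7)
The plan is to argue by contradiction, in the spirit of the classical local Bedford-Taylor minimum principle, and to handle the Hermitian semi-positive and big framework by perturbation with the model potential $\rho$ so as to reduce to the strictly positive situation of Kolodziej-Nguyen~\cite[Prop.~2.5]{kolodziej2019stability}. Setting $m := \min_{\bar D}(v-u)$ and $M := \min_{\partial D}(v-u)$, assume for contradiction that $m<M$ and fix $\delta\in(0,(M-m)/2)$. The open sublevel set
\[
U := \{x\in \bar D : v(x)-u(x) < m+\delta\}
\]
is nonempty (it meets any minimum point of $v-u$ in $\bar D$) and, because $v-u$ is continuous on $\Omega$ and strictly larger than $m+\delta$ on $\partial D$, one has $\bar U\Subset D\Subset \Omega$, with $v=u+m+\delta$ on $\partial U$.

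The goal is then to establish the chain
\[
0 \;<\; \int_U (\theta+\dc u)^n \;\leq\; \int_U (\theta+\dc v)^n \;\leq\; c\int_U(\theta+\dc u)^n,
\]
which combined with $c\in[0,1)$ forces $\int_U (\theta+\dc u)^n=0$ and is the required contradiction. The leftmost strict positivity follows from the hypothesis $\theta+\dc u\geq\varepsilon_0\omega_X$ together with Bedford-Taylor's inequality $(\theta+\dc u)^n\geq \varepsilon_0^n\omega_X^n$, valid on the open set $\Omega$ where $u$ is locally bounded. The rightmost inequality is the hypothesis integrated over $U\subset D$. The crux is therefore the middle inequality, a comparison principle on the contact set $U$ where $v$ touches $u+m+\delta$ from below.

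The main obstacle is exactly this middle comparison, since in the Hermitian setting Stokes' theorem generates torsion terms from $d\omega_X$, and on top of that $\theta$ is merely semi-positive. To overcome both difficulties simultaneously, I would introduce the regularization: letting $c_0:=\sup_{\bar D}\rho<+\infty$ (finite since $\bar D\Subset\Omega$), set for small $\eta>0$
\[
u_\eta := u+\eta(\rho-c_0), \qquad v_\eta := v+\eta(\rho-c_0).
\]
Both are $(1+\eta)\theta$-psh on $\Omega$, bounded on $\bar D$, and by $\theta+\dc\rho\geq \delta\omega_X$ satisfy $(1+\eta)\theta+\dc u_\eta\geq (\varepsilon_0+\eta\delta)\omega_X$ and $(1+\eta)\theta+\dc v_\eta\geq \eta\delta\omega_X$ on a neighborhood of $\bar D$. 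Since $v_\eta-u_\eta=v-u$, the sublevel set $\{v_\eta<u_\eta+m+\delta\}\cap\bar D$ still equals $U$, and we are now in the strictly positive framework where the Kolodziej-Nguyen comparison principle applies, yielding
\[
\int_U \bigl((1+\eta)\theta+\dc u_\eta\bigr)^n \;\leq\; \int_U \bigl((1+\eta)\theta+\dc v_\eta\bigr)^n.
\]
Expanding both sides multilinearly and using that $\rho$ is bounded on $\bar U\Subset\Omega$ so all mixed Monge-Amp\`ere masses are finite there, both sides converge to $\int_U(\theta+\dc u)^n$ and $\int_U(\theta+\dc v)^n$ respectively as $\eta\to 0^+$; this delivers the desired middle inequality and closes the contradiction. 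This regularization step, converting the big form into a strictly positive one in a neighborhood of $\bar D$, is the real technical content of the argument.
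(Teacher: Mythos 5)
There is a genuine gap at the crux of your argument: the clean comparison inequality
\[
\int_U (\theta+\dc u)^n \;\leq\; \int_U (\theta+\dc v)^n
\]
does not hold in the Hermitian setting, and your regularization does not recover it. The perturbation $u\mapsto u_\eta=u+\eta(\rho-c_0)$, $\theta\mapsto(1+\eta)\theta$ only addresses the degeneracy of $\theta$ (making it dominate a Hermitian form near $\bar D$); it does nothing to the torsion terms produced by $d\theta\neq 0$, which are the actual source of the error factors in the Hermitian comparison principle. Kolodziej--Nguyen's comparison principle for Hermitian forms (and likewise the version used by this paper, Theorem~1.11 of \cite{guedj2021quasi}) is not clean: on a sublevel set of width $s$ it gives something of the form $(1-Bs)^n\int(\theta+\dc u)^n\leq\int(\theta+\dc v)^n$, with $B$ depending on the torsion of the background form. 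That factor survives your regularization and your limit $\eta\to 0$, so the chain of inequalities you want does not close on a fixed set $U$.

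The paper's proof is structured precisely to exploit this error factor rather than eliminate it: it uses a one-parameter family of sublevel sets $U(s)$ (built from the auxiliary potential $\phi=v$ on $D$, $\phi=\max(u,v)$ off $D$), obtains $(1-Bs)^n\int_{U(s)}(\theta+\dc u)^n\leq\int_{U(s)}(\theta+\dc v)^n\leq c\int_{U(s)}(\theta+\dc u)^n$, and then derives $(1-Bs)^n<c$ for every small $s>0$; letting $s\to 0^+$ gives $1\leq c<1$, a contradiction. Your proof has no analogous free parameter to send to zero (your $\delta$ is fixed and your $U$ does not shrink), so even if you used the correct Hermitian comparison with error you would be stuck with a nonvanishing factor. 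Note also that \cite[Theorem 1.11]{guedj2021quasi} already applies directly to big forms, so the $\rho$-regularization of the background form is unnecessary on top of being insufficient.

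A secondary issue: you state that $v-u$ is continuous on $\Omega$, so $U$ is open with $v=u+m+\delta$ on $\partial U$. But $u$ is only assumed to lie in $L^\infty_{\rm loc}(\Omega)$, not $\mathcal{C}^0(\Omega)$; $-u$ is merely lower semicontinuous, hence $v-u$ is lower semicontinuous and the openness of $\{v-u<m+\delta\}$ does not follow. This is one more reason the paper works with sublevel sets of the usc function $\phi-u-\inf_X(\phi-u)$ rather than directly with $\{v-u<m+\delta\}$.
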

	The proof is the same as that of~\cite[Propostion 2.5]{kolodziej2019stability}, but we shall apply the "comparison principle" established in~\cite[Theorem 1.11]{guedj2021quasi}.
	\begin{proof} 
	Without loss of generality we assume that $\min_{\partial D}(v- u)=0$, i.e. \[u\leq v\quad\text{on}\; \partial D .\] We need to show that $\min_{\bar{D}}(v- u)=0$. Assume by contradiction that 
	$m:=\min_{\bar{D}}(v-u)<0.$ 
	We set \[\phi=\begin{cases}
	v,& \text{on}\; D,\\
	\max ( u,v),&\text{on}\; X\setminus D.
	\end{cases} \] Observe that $\inf_X(\phi-u)\leq m<0$, hence the set
	 $$U(s):=\{\phi<u+\inf_{X}(\phi-u)+s\}\subset\{\phi<u\} \subset\subset D$$ is a nonempty open set for $s>0$ small enough. 
	It follows from~\cite[Theorem 1.11]{guedj2021quasi} that
	\begin{equation*}
	    0<(1-Bs)^n\int_{U(s)}(\theta+\dc u)^n\leq \int_{U(s)}(\theta+\dc v)^n\leq c\int_{U(s)}(\theta+\dc u)^n,
	    \end{equation*}for a uniform constant $B=B(\varepsilon_0,\omega_X)>0$, where we have used the facts that $ (\theta+\dc u)^n\geq \varepsilon_0^n\omega_X^n$
	    and  $(\theta+\dc\phi)^n=(\theta+\dc v)^n$ on the open set $U(s)\subset D$. Therefore, for every $s>0$ small enough,
	    \[ (1-Bs)^n<c \] which is impossible. This completes the proof.
	\end{proof}

\subsubsection{Stability}
We now establish the following $L^1-L^\infty$-stability estimate which allows us to show uniform convergence of solutions as long as they converge in $L^1$.
\begin{theorem}\label{thm: stability} 
  Fix $f_1,f_2\in L^p(X)$ with $p>1$ and $B^{-1}\leq \left(\int_Xf_i^{\frac{1}{n}}dV_X \right)^n\leq \left(\int_Xf_i^p dV_X\right)^{\frac{1}{p}}\leq B$, for some constant $B>1$. Assume $\f_1,\f_2\in\PSH(X,\theta)\cap L^\infty(X)$ satisfy
  \begin{equation*}
      (\theta+\dc\f_i)^n=f_idV_X.
  \end{equation*} Then there exist $\alpha=\alpha(p)>0$ and 
  a constant $C>0$ depending on $n$, $p$, 
  and $B$
  such that
  \begin{equation*}
      \|\f_1-\f_2\|_{L^{\infty}(X)}\leq C\left(\|\f_1-\f_2\|_{1}^\alpha +\|f_1-f_2\|_p \right)^{1/n}.
  \end{equation*}
\end{theorem}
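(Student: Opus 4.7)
The plan is to adapt Kolodziej's $L^1$-$L^\infty$ stability scheme to the degenerate Hermitian setting, using the minimum principle (Proposition~\ref{min-prin}) as a substitute for the standard comparison principle. The argument has three parts: a uniform $L^\infty$ bound, a perturbation restoring strict positivity, and a sublevel-set estimate converting $L^1$ control into $L^\infty$ control.

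First I would normalize $\sup_X\f_i=0$, which is lossless since both sides of the target inequality are invariant under a common additive constant. The Kolodziej-type a priori estimate in the Hermitian setting (\cite{kolodziej2015weak}) then gives $\|\f_i\|_{L^\infty}\leq C_0=C_0(n,p,B)$. By symmetry it suffices to bound $M:=\sup_X(\f_1-\f_2)\geq 0$. Since $\theta$ is big, fix $\rho\in\PSH(X,\theta)$ with analytic singularities and $\theta+\dc\rho\geq\delta_0\omega_X$, normalized by $\sup\rho=0$, and set $\Omega:=\{\rho>-\infty\}$. For $\eta\in(0,1)$ small, consider the perturbation
\[\tilde\f_2:=(1-\eta)\f_2+\eta\rho,\]
which lies in $\PSH(X,\theta)$ and satisfies $\theta+\dc\tilde\f_2\geq\eta\delta_0\omega_X>0$ on $\Omega$. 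Lemma~\ref{lem: mixed_ineq} produces a lower bound of the form
\[(\theta+\dc\tilde\f_2)^n\geq c_\eta\, f_2^{1-\eta}\, dV_X\]
for an explicit $c_\eta>0$, so that Proposition~\ref{min-prin} becomes applicable to $u=\tilde\f_2$ on any $D\subset\subset\Omega$.

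The core step is to apply the minimum principle to sublevel sets of $\tilde\f_2-\f_1$. Specifically, one considers $U(s):=\{\tilde\f_2<\f_1+\inf_X(\tilde\f_2-\f_1)+s\}$, which for small $s$ is relatively compact in $\Omega$ thanks to the analytic singularities of $\rho$. Combining the measure lower bound with $(\theta+\dc\f_1)^n=f_1\,dV_X$ and H\"older's inequality against the $L^p$-assumption on the densities yields a Kolodziej-type volume estimate $\Vol(U(s))\leq C(\|f_1-f_2\|_p+O(\eta))$. To couple this with the $L^1$-control, one observes that if $M$ were large then a set of nontrivial measure would satisfy $\f_1-\f_2\geq M/2$, whence $\|\f_1-\f_2\|_1\gtrsim M\cdot\Vol(\{\f_1-\f_2>M/2\})$, linking the unknown $M$ directly to the controlled volume.

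Putting these ingredients together gives a polynomial relation of the form $M^n\leq C(\|f_1-f_2\|_p+\|\f_1-\f_2\|_1^{\alpha_0})+C\eta$, and optimizing over the free parameter $\eta$ produces the Hölder exponent $\alpha=\alpha(p)>0$ and the announced bound. The main obstacle is the degeneracy of $\theta$: being only semi-positive, it does not permit a direct comparison between $\f_1$ and $\f_2$, so the perturbation $\tilde\f_2$ is essential to trigger Proposition~\ref{min-prin}. However this perturbation introduces the analytic singular set $\{\rho=-\infty\}$, forcing all estimates to be localized to $D\subset\subset\Omega$; balancing the $\eta$-errors from the perturbation against the $L^p$ and $L^1$ smallness is the delicate point of the proof.
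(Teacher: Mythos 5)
The paper's proof is much slicker and takes a genuinely different route. The key trick there is to rewrite each equation $(\theta+\dc\f_i)^n = f_i\,dV_X$ in the exponential form $(\theta+\dc\f_i)^n = e^{\f_i}\,(e^{-\f_i}f_i)\,dV_X$ and then invoke the known $L^\infty$-stability estimate of \cite[Theorem 3.5 (1)]{guedj2021quasi} for equations of the type $(\theta+\dc u)^n = e^{u}g\,dV_X$, obtaining $\|\f_1-\f_2\|_{L^\infty}^n\lesssim \|e^{-\f_1}f_1-e^{-\f_2}f_2\|_{L^r}$ for $r=\tfrac{2p}{p+1}$; the triangle inequality, H\"older, and the elementary bound $|e^a-e^b|\le|a-b|e^{a+b}$ then convert the right-hand side into the announced $\|\f_1-\f_2\|_1^\alpha$ and $\|f_1-f_2\|_p$ terms. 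This completely avoids any sublevel-set or capacity argument.

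Your plan of adapting the Kolodziej sublevel-set scheme directly is conceivable but, as sketched, has a genuine gap at the point where you invoke Proposition~\ref{min-prin}. That proposition requires a \emph{pointwise} domination $(\theta+\dc v)^n\le c\,(\theta+\dc u)^n$ on $D$ for some $c<1$. After your perturbation, you have $(\theta+\dc\tilde\f_2)^n\ge c_\eta f_2^{1-\eta}\,dV_X$ and $(\theta+\dc\f_1)^n=f_1\,dV_X$; applying Proposition~\ref{min-prin} with $u=\tilde\f_2$, $v=\f_1$ would require $f_1\le c\,c_\eta f_2^{1-\eta}$ \emph{pointwise} on $D$, which is exactly what you cannot assume since the whole point is to allow $f_1$ and $f_2$ to differ. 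The tool you actually want is the underlying integral comparison principle on sublevel sets (\cite[Theorem 1.11]{guedj2021quasi}, with its Hermitian correction factor $(1-Bs)^n$), which is what Proposition~\ref{min-prin} is itself derived from. Moreover the claimed ``volume estimate'' $\Vol(U(s))\le C(\|f_1-f_2\|_p+O(\eta))$ does not follow directly from comparing Monge--Amp\`ere masses over sublevel sets; passing from mass control to volume control requires a capacity argument (domination of Lebesgue by Monge--Amp\`ere capacity), which your sketch leaves out. So your route is structurally different and could perhaps be carried out with the full comparison principle plus capacity machinery, but as written the two central steps do not close.
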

\begin{proof} Without loss of generality we may assume that $\f_1,\f_2\leq 0$.
Set $r=\frac{2p}{p+1}$, hence $1<r<p$.
Since $\f_1$, $\f_2$ are uniformly bounded we have $e^{-\f_i}f_i\in L^r(X)$ for $i=1,2$.  Therefore, there exists a constant $A>0$ depending on  $B$  such that
\begin{equation*}
    A^{-1}\leq \left(\int_Xe^{\frac{-\f_i}{n}}f_i^{\frac{1}{n}}dV_X \right)^n\leq \left(\int_Xe^{-r\f_i}f_i^r dV_X\right)^{\frac{1}{r}}\leq A
\end{equation*}
It follows from~\cite[Theorem 3.5 (1)]{guedj2021quasi} that there is a constant $T>0$ depending on $n$, $r$, $A$ such that
\begin{equation*}\begin{split}
     \|\f_1-\f_2\|_{L^{\infty}(X)}^n&\leq T\|e^{-\f_1}f_1-e^{-\f_2}f_2\|_r\\
     &\leq \|e^{-\f_1}f_1-e^{-\f_2}f_1\|_r +\|e^{-\f_2}f_1-e^{-\f_2}f_2\|_r.
\end{split}
\end{equation*}By H\"older inequality, the latter term is bounded from above by $C(p)\|f_1-f_2\|_p$. Now, we have
\begin{equation*}\begin{split}
    \int_X|f_1|^r|e^{-r\f_1}-e^{-r\f_2}| dV_X&\leq \int_X|f_1|^r r|\f_1-\f_2|e^{-r(\f_1+\f_2)}dV_X\\
    &\leq \left( \int_Xr^p|f_1|^pdV_X\right)^\frac{r}{p}\left(\int_X\left| \f_1-\f_2\right|^{\frac{p}{p-r}}e^{-\frac{pr}{p-r}(\f_1+\f_2)}dV_X \right)^{\frac{p-r}{p}}\\
    &\leq C(p)\|f_1\|^r\|\f_1-\f_2\|_1^{\frac{p-r}{r}},\end{split}
\end{equation*}
where we have used  H\"older inequality again, and the elementary inequality
\begin{equation*}
    |e^a-e^b|\leq |a-b|e^{a+b}, \; \text{for all}\; a,b\geq 0.
\end{equation*} The conclusion thus follows.
\end{proof}

	\subsection{Parabolic potentials}\label{sect: pp}
Parabolic  potentials  form  the  basic  objects  of  our  study.   They  can  be seen as weakly regular family of plurisubharmonic functions.  In this section we define them and recall their first properties. We refer the reader to~\cite{guedj2018pluripotential,guedj2020pluripotential} for more details.
	\subsubsection{Families of quasi-plurisubharmonic functions.} We start with some basic definitions which will be used throughout the paper.
	\begin{definition}\label{def: pp}
	The set of \emph{parabolic potentials}	$\mathcal{P}(X_T,\omega)$ is the set of functions  $\f:(0,T)\times X\to[-\infty,+\infty)$ satisfying
		\begin{itemize}
			\item $x\mapsto\f(t,x)$ is $\omega_t$-psh for all $t\in (0,T)$,
			\item $\f$ is locally uniformly Lipschitz in $(0,T)$.
		\end{itemize}
	\end{definition}
	The last condition means that for any compact sub-interval $J\subset(0,T)$ there exists $\kappa=\kappa_J(\f)>0$ such that for every $x\in X$,
	\begin{align*}
	\f(t,x)\leq \f(s,x)+\kappa|t-s|, \quad\forall\, s,t\in J.
	\end{align*}
	
	A parabolic potential $\f\in\mathcal{P}(X_T,\omega)$ can be extended as a upper semi-continuous function on $[0,T)\times X$ with $\omega_t$-psh slices.
	
	\begin{proposition}
		Let $\f_0$ be a $\omega_0$-psh function. Assume $\f\in\mathcal{P}(X_T,\omega)$ satisfying $\f_t\to \f_0$ in $L^1(X)$ as $t\to 0$. Then the extension $\f:[0,T)\times X\to [-\infty,+\infty)$ is upper semi-continuous. 
	\end{proposition}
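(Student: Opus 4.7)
The strategy is to prove joint upper semi-continuity on $[0,T) \times X$ by treating interior points $(t_0, x_0)$ with $t_0 > 0$ and boundary points $(0, x_0)$ separately. In either case, for each fixed $t$ the slice $\f(t, \cdot)$ is $\om_t$-psh and therefore upper semi-continuous in $x$, so the work lies in propagating u.s.c. across the time direction.

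For an interior point $(t_0, x_0) \in (0, T) \times X$, I would fix a compact sub-interval $J \subset (0, T)$ with $t_0$ in its interior. For any sequence $(t_k, x_k) \to (t_0, x_0)$, the local Lipschitz property in the definition of $\mathcal{P}(X_T, \om)$ yields
$$\f(t_k, x_k) \leq \f(t_0, x_k) + \kappa_J(\f)\,|t_k - t_0|$$
for all $k$ sufficiently large. Taking $\limsup$ as $k \to \infty$ and invoking the u.s.c. of $\f(t_0, \cdot)$ immediately gives $\limsup_k \f(t_k, x_k) \leq \f(t_0, x_0)$.

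The boundary case $(t_k, x_k) \to (0, x_0)$ is the real content: the Lipschitz-in-$t$ bound degenerates as $t \to 0$, so one must convert the $L^1$ convergence $\f_{t_k} \to \f_0$ into a pointwise upper bound. First I would establish that $\sup_{0 < t \leq \delta} \sup_X \f_t < +\infty$ for some $\delta > 0$: if this failed one could extract $t_k \to 0$ with $M_k := \sup_X \f_{t_k} \to +\infty$, normalize $\psi_k := \f_{t_k} - M_k \in \PSH(X, \om_{t_k})$ with $\sup_X \psi_k = 0$, and (using the smooth convergence $\om_{t_k} \to \om_0$ guaranteed by \eqref{assum: omega}, together with standard compactness of normalized quasi-psh families) extract a subsequence converging in $L^1(X)$ to some $\psi \in \PSH(X, \om_0)$; then $\f_{t_k} = \psi_k + M_k$ could not converge in $L^1$ to the finite function $\f_0$, a contradiction. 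With the uniform upper bound in hand, $\{\f_{t_k}\}$ is a sequence of $\om_{t_k}$-psh functions, uniformly bounded above, converging in $L^1(X)$ to the u.s.c. limit $\f_0 \in \PSH(X, \om_0)$. A Hartogs-type lemma for quasi-psh functions (applied with the smoothly varying reference forms $\om_{t_k} \to \om_0$) then yields $\limsup_k \f_{t_k}(x_k) \leq \f_0(x_0)$, as required.

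The main obstacle is precisely the boundary $t = 0$: the two ingredients that make Hartogs' lemma applicable there are the uniform upper bound on $\f_t$ for small $t$ (without which no pointwise control can be deduced from $L^1$ convergence) and the smoothness of the family $(\om_t)$, which ensures that the $\om_{t_k}$-psh property passes to the limiting $\om_0$-psh statement. Once these are in place the argument is routine, but without either ingredient the statement would fail.
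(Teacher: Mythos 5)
Your proof is correct, and it reaches the conclusion along a route that, while close in spirit to the paper's, is carried out differently. The paper's own proof is essentially a localization: one restricts to a coordinate ball $B$, picks a local K\"ahler potential $h$ with $\dc h \geq \omega_t$ uniformly, and replaces $\f_t$ by the genuinely plurisubharmonic $\f_t + h$, so that the sub-mean-value inequality and classical Hartogs lemma for psh functions on domains in $\mathbb{C}^n$ apply directly; the uniform upper bound for $\f_t$ near $t=0$ is absorbed into the normalization ``psh and negative in $B$''. You instead stay global on the compact manifold $X$, split u.s.c. into the interior regime (handled by the locally uniform Lipschitz bound in $t$, exactly as in the paper) and the $t=0$ boundary regime, and there argue via the compactness of normalized $\omega_{t_k}$-psh functions plus a Hartogs-type lemma for quasi-psh families with smoothly varying reference forms. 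The two approaches trade one technicality for another: localizing removes the need to deal with varying reference forms and lets you quote the euclidean Hartogs lemma verbatim, whereas your global version avoids the passage to charts but requires you to justify compactness and Hartogs for quasi-psh functions with $\omega_{t_k} \to \omega_0$. One small simplification you could make: the uniform bound $\sup_{0<t\leq\delta}\sup_X\f_t < +\infty$ follows more directly from the standard estimate $\sup_X u \leq C_\omega + \frac{1}{\mathrm{Vol}(X)}\int_X u\,dV$ for $u \in \PSH(X,\omega)$ together with $\int_X \f_t\,dV \to \int_X\f_0\,dV$, sidestepping the subsequence-extraction contradiction; but your argument as written is also valid.
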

	\begin{proof} The proof is almost identical to that of~\cite[Proposition 1.2]{guedj2020pluripotential}. 
		The problem is local, we can thus assume here that $X=B\subset\mathbb{C}^n$ is a neighborhood of $x_0$. Let $\Theta$ be a K\"ahler form in $B$ satisfying $\Theta=\dc h\geq \omega_t$ for all $t$, where $h$ is a local potential on $B$. Changing $\f_t$ by $\f_t+h$ we can assume that $\f_{t}$ are psh and negative in $B$. 
	We can proceed exactly the same as in~\cite[Proposition 2.1]{guedj2020pluripotential} to conclude.
	\end{proof}
	We also have  a compactness result for this class of functions.
	\begin{proposition}
		Let $(\f_j)\subset \mathcal{P}(X_T,\omega)$ be a sequence which
		\begin{itemize}
			\item is locally uniformly bounded from above in $X_T$,
			\item is locally uniformly Lipschitz in $(0,T)$,
			\item does not converge locally uniformly to $-\infty$ in $X_T$.
		\end{itemize}
		Then $(\f_j)$ is bounded in $L^1_{\rm loc}(X_T)$ and there exists a subsequence  which converges to some parabolic potential $\f$ in the $L^1_{\rm loc}(X_T)$-topology.
	\end{proposition}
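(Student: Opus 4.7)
The plan is to follow the strategy of the analogous result in~\cite{guedj2020pluripotential}, adapted to the Hermitian setting where the reference $(1,1)$-forms $\omega_t$ need not be closed. The idea is first to upgrade the three hypotheses to a uniform $L^1_{\rm loc}(X_T)$ bound along a suitable subsequence, and then to extract a further subsequence convergent in $L^1_{\rm loc}$ by combining the time-Lipschitz structure with the classical compactness of families of quasi-plurisubharmonic functions.

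First I would establish the $L^1_{\rm loc}$ boundedness. By the third hypothesis there exist a compact $K_0\subset X_T$, a constant $M>0$, and a subsequence (still denoted $\f_j$) with $\sup_{K_0}\f_j\geq -M$. Choosing $(t_j,x_j)\in K_0$ with $\f_j(t_j,x_j)\geq -M$ and extracting so that $t_j\to t_*\in (0,T)$, the uniform time-Lipschitz bound on a compact interval $J\subset (0,T)$ containing $t_*$ gives
\[
\sup_X\f_j(s,\cdot)\;\geq\; \f_j(s,x_j)\;\geq\; -M-\kappa_J|J|,\qquad s\in J.
\]
Since $\omega_s\leq C\omega_X$ uniformly for $s$ in any compact subinterval, each slice $\f_j(s,\cdot)$ is a $C\omega_X$-psh function, bounded above and with supremum bounded from below, so the classical compactness of families of quasi-psh functions on a compact Hermitian manifold yields $\|\f_j(s,\cdot)\|_{L^1(X)}\leq C'$ uniformly in $j$ and $s\in J$. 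The time-Lipschitz bound then propagates this slice estimate from $J$ to any compact subinterval of $(0,T)$, and together with the upper bound from the first hypothesis it gives the desired $L^1_{\rm loc}(X_T)$ bound after integration in $t$.

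Next I would extract a convergent subsequence. Pick a countable dense set $\{s_i\}\subset (0,T)$. Each family $\{\f_j(s_i,\cdot)\}_j$ is $\omega_{s_i}$-psh and bounded in $L^1(X)$, hence relatively compact there; a diagonal extraction produces a subsequence with $\f_j(s_i,\cdot)\to g_{s_i}$ in $L^1(X)$ for every $i$. The Lipschitz bound passes to the limit, so $\|g_{s_i}-g_{s_{i'}}\|_{L^1(X)}\leq \kappa\,|s_i-s_{i'}|\,{\rm vol}(X)$, and $s\mapsto g_s$ extends by uniform continuity to a Lipschitz map $(0,T)\to L^1(X)$. A three-term estimate, splitting $\|\f_j(s,\cdot)-g_s\|_{L^1(X)}$ through a nearby $s_i$, shows $\f_j(s,\cdot)\to g_s$ in $L^1(X)$ uniformly in $s$ on compact subintervals, and Fubini upgrades this to $L^1_{\rm loc}(X_T)$ convergence.

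Finally I would verify that the limit belongs to $\mathcal{P}(X_T,\omega)$. The time-Lipschitz constants pass to the limit immediately. For each $s$, the $L^1$-limit $g_s$ of uniformly bounded above $\omega_s$-psh functions agrees almost everywhere with a unique $\omega_s$-psh upper semi-continuous function, which I take as the slice $\f(s,\cdot)$. The main difficulty I anticipate is the bookkeeping of these pointwise representatives: one has to ensure that with this choice $t\mapsto\f(t,x)$ is genuinely Lipschitz for every $x$, so that the preceding proposition can be invoked to give an upper semi-continuous extension to $[0,T)\times X$. This bookkeeping is handled exactly as in the Kähler case of~\cite{guedj2020pluripotential}, since all the arguments above are either slice-wise (where $\omega_s$ is a fixed Hermitian form) or purely metric in the $t$-variable, so that the non-closedness of $\omega_t$ plays no role.
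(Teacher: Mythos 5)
Your proposal is correct and follows the same strategy the paper invokes by citing \cite[Proposition 1.14]{guedj2018pluripotential}: slice-wise compactness of uniformly normalized quasi-psh functions (which is a local statement and hence insensitive to $d\omega_t\neq 0$), combined with the time-Lipschitz bound to propagate the $L^1$ slice estimates, extract a diagonal subsequence over a dense set of times, and identify the limit with a genuine parabolic potential by passing the pointwise Lipschitz inequality through $L^1$-limits and then from a.e.\ to everywhere via the sub-mean-value property of quasi-psh functions. The bookkeeping you flag at the end is exactly the step handled in the cited reference, and your description of it is accurate.
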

	\begin{proof}
	  The proof of this result is local and quite close to the classical proof of the analogous result for quasi-psh functions. We refer the reader to~\cite[Proposition 1.14]{guedj2018pluripotential} for more details.  
	\end{proof}
		Fix $\mu$ a Borel measure on X, and let $\ell$ denote the Lebesgue measure on $\mathbb{R}$.
	\begin{lemma}\label{lem: rhs}
	    Fix $\f\in\mathcal{P}(X_T,\omega)$. Then $\partial_t\f(t,x)$ exists for all $(t,x)\in X_T\setminus E$ where $E\subset X_T$ is a $\ell\otimes\mu$-negligible set. 
	    
	    In particular, $\partial_t\f\in L^\infty_{\rm loc}(X_T)$ and $h(\partial_t\f)\ell\otimes\mu$ is a well defined Borel measure on $X_T$ for any continuous function $h:\mathbb{R}\rightarrow\mathbb{R}$.
	\end{lemma}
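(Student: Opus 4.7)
The proof is essentially a Fubini argument combined with the classical one-dimensional Rademacher theorem. First, unwind the local Lipschitz hypothesis in the definition of $\mathcal{P}(X_T,\omega)$: swapping $s$ and $t$ yields
\[
|\f(t,x)-\f(s,x)|\leq \kappa_J |t-s|,\qquad \forall\, s,t\in J,\ \forall\, x\in X,
\]
for every compact $J\subset(0,T)$. Hence for each fixed $x\in X$ the map $t\mapsto\f(t,x)$ is locally Lipschitz on $(0,T)$, and by the one-dimensional version of Rademacher's theorem it is differentiable at $\ell$-almost every $t$.

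To promote this pointwise-in-$x$ statement to a joint statement off a negligible set, introduce the Dini derivatives
\[
D^+\f(t,x):=\limsup_{h\to 0}\frac{\f(t+h,x)-\f(t,x)}{h},\qquad D^-\f(t,x):=\liminf_{h\to 0}\frac{\f(t+h,x)-\f(t,x)}{h}.
\]
Because $\f$ is upper semi-continuous on $[0,T)\times X$ (by the preceding proposition), hence Borel measurable, both Dini derivatives admit a standard countable limsup/liminf representation that shows them to be Borel measurable in $(t,x)$. Therefore the exceptional set
\[
E:=\{(t,x)\in X_T:\, D^+\f(t,x)\neq D^-\f(t,x)\}
\]
is Borel. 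Its $x$-slice $E^x=\{t:(t,x)\in E\}$ is contained in the (Lebesgue-null) set of non-differentiability points of the locally Lipschitz function $t\mapsto\f(t,x)$, so Fubini–Tonelli yields $(\ell\otimes\mu)(E)=0$. On $X_T\setminus E$ we may set $\partial_t\f:=D^+\f$, which is then a Borel function.

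The remaining conclusions are immediate. On $J\times X$ the Lipschitz bound gives $|\partial_t\f|\leq \kappa_J$ wherever defined, so (after extending by $0$ on $E$) $\partial_t\f$ is a locally bounded Borel function on $X_T$, i.e.\ $\partial_t\f\in L^\infty_{\mathrm{loc}}(X_T)$. For any continuous $h:\mathbb{R}\to\mathbb{R}$, the composition $h(\partial_t\f)$ is then locally bounded and Borel measurable, hence $h(\partial_t\f)\,\ell\otimes\mu$ is a well-defined locally finite Borel measure on $X_T$. I do not foresee a substantive obstacle; the only genuinely nontrivial verification is the joint Borel measurability of $D^\pm\f$, which relies on the upper semi-continuity of the extension $\f$ rather than on any pluripotential input.
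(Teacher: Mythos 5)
Your argument is correct, and it is essentially the standard Rademacher-plus-Fubini proof that the paper delegates to \cite[Lemma 1.13]{guedj2018pluripotential}: one-variable Rademacher differentiability of the locally Lipschitz slices $t\mapsto\f(t,x)$, joint measurability of the difference quotients, and Fubini–Tonelli to conclude the exceptional set is $\ell\otimes\mu$-negligible; the $L^\infty_{\rm loc}$ bound is then an immediate consequence of the local Lipschitz constant.

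One small repair to the justification: you invoke the preceding proposition for upper semi-continuity of $\f$, but that proposition is about extending $\f$ to the closed time interval $[0,T)$ and requires the extra hypothesis $\f_t\to\f_0$ in $L^1$, which the present lemma does not assume. You do not need it: for $(t_j,x_j)\to(t_0,x_0)$ in $X_T$, the local Lipschitz bound gives $\f(t_j,x_j)\le\f(t_0,x_j)+\kappa_J|t_j-t_0|$, and upper semi-continuity of the $\omega_{t_0}$-psh slice $x\mapsto\f(t_0,x)$ gives $\limsup_j\f(t_0,x_j)\le\f(t_0,x_0)$, so $\f$ is upper semi-continuous (hence Borel) on $X_T$ directly from the definition of $\mathcal{P}(X_T,\omega)$. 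With that, the rest of your argument — Borel measurability of the Dini derivatives via difference quotients over rational increments (legitimate because $t\mapsto\f(t,x)$ is continuous), the resulting Borel exceptional set $E$ with null slices $E^x$, and the Fubini conclusion $(\ell\otimes\mu)(E)=0$ — goes through exactly as you wrote it.
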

\begin{proof}
The problem is local. We refer the reader to~\cite[Lemma 1.13]{guedj2018pluripotential} for a proof.
\end{proof}
	
	\subsubsection{Parabolic complex Monge-Amp\`ere operators}
	We assume here that $\f\in\mathcal{P}(X_T,\omega)\cap L^\infty_{}(X_T)$. For each $t\in (0,T)$, the function $x\mapsto\f_t(x)=\f(t,x)$ is $\omega_t$-psh and bounded, hence the wedge product $$(\omega_t+\dc\f_t)^n:=(\omega_t+\dc\f_t)\wedge\cdots\wedge(\omega_t+\dc\f_t)$$ is well-defined as a positive Borel measure on $X$, following the works of Bedford-Taylor \cite{bedford1982new} (see e.g. \cite{dinew2012pluripotential,kolodziej2015weak}).

	\begin{lemma}\label{lem: cln} Let $\f\in\mathcal{P}(X_T)\cap L^\infty_{}(X_T)$ and $\chi$ a continuous test function  on $X_T$. Then the function \[\Gamma_\chi:t\longmapsto\int_X\chi(t,\cdot)(\omega_t+\dc \f_t)^n\] is continuous in $(0,T)$. 
	\end{lemma}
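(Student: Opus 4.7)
The plan is to fix $t_0 \in (0,T)$, pick a compact subinterval $J \Subset (0,T)$ containing $t_0$ in its interior, and decompose
\[
\Gamma_\chi(t) - \Gamma_\chi(t_0) = I_1(t) + I_2(t),
\]
where
\[
I_1(t) := \int_X [\chi(t,\cdot) - \chi(t_0,\cdot)](\omega_t + \dc\f_t)^n, \qquad I_2(t) := \int_X \chi(t_0,\cdot)\bigl[(\omega_t + \dc\f_t)^n - (\omega_{t_0} + \dc\f_{t_0})^n\bigr].
\]
The first piece $I_1(t)$ tends to $0$ by uniform continuity of $\chi$ on the compact set $\overline{J} \times X$ combined with a uniform mass bound $\int_X (\omega_t + \dc\f_t)^n \leq C$ for $t \in J$; the latter is a Chern--Levine--Nirenberg type estimate on the compact Hermitian manifold $X$, relying only on the uniform $L^\infty$ bound on $\f_t$ and the smoothness of the family $\omega_t$.

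The substantive task is to show that $I_2(t) \to 0$, which reduces to establishing the weak convergence of currents $(\omega_t + \dc\f_t)^n \to (\omega_{t_0} + \dc\f_{t_0})^n$ and then testing against the continuous function $\chi(t_0,\cdot)$. To disentangle the simultaneous variation of reference form and potential, I would fix a constant $C_0 > 0$ such that $\Omega := C_0 \omega_X \geq \omega_t$ for every $t \in J$, and set $\eta_t := \Omega - \omega_t$, a smooth family of non-negative $(1,1)$-forms. Each $\f_t$ is then $\Omega$-psh with uniformly bounded sup-norm, so the Bedford--Taylor currents $\mu_t^j := (\Omega + \dc\f_t)^j$ are well defined. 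Since wedge products of $(1,1)$-currents commute, one has the binomial expansion
\[
(\omega_t + \dc\f_t)^n = (\mu_t - \eta_t)^n = \sum_{j=0}^n \binom{n}{j}(-1)^{n-j}\, \mu_t^j \wedge \eta_t^{n-j}.
\]
The local uniform Lipschitz property of $\f$ yields $\f_t \to \f_{t_0}$ uniformly on $X$ as $t \to t_0$, so Bedford--Taylor continuity applied with the \emph{fixed} reference form $\Omega$ (available in the Hermitian setting, see~\cite{dinew2012pluripotential,kolodziej2015weak,guedj2021quasi}) delivers $\mu_t^j \to \mu_{t_0}^j$ weakly for each $0 \leq j \leq n$. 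Combined with the smooth convergence $\eta_t^{n-j} \to \eta_{t_0}^{n-j}$ and uniform mass bounds on the currents $\mu_t^j$, each summand converges weakly; summing gives the desired weak convergence of Monge--Amp\`ere currents, hence $I_2(t) \to 0$.

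The main obstacle is precisely the joint variation of $\omega_t$ and $\f_t$; the binomial expansion above is the standard device that reduces the problem to Bedford--Taylor continuity against a \emph{single} fixed reference form $\Omega$. The non-K\"ahlerness of $\omega_X$ causes no additional difficulty, since all of the ingredients used here---wedge products of bounded-potential positive currents with smooth forms, uniform Monge--Amp\`ere mass bounds, and continuity under uniform convergence---are available in the Bedford--Taylor / Dinew--Ko{\l}odziej pluripotential calculus on compact Hermitian manifolds.
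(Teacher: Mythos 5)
Your proof is correct and, as far as the tools go, is the same Bedford--Taylor argument that underlies the cited reference; where the paper simply points to~\cite[Lemma 1.9]{guedj2020pluripotential}, you give a self-contained version. One feature worth flagging as a genuine contribution of your write-up is the binomial-expansion device: dominating $\omega_t \leq \Omega := C_0\omega_X$, setting $\eta_t := \Omega - \omega_t \geq 0$, and expanding $(\omega_t+\dc\f_t)^n = \sum_j \binom{n}{j}(-1)^{n-j}(\Omega+\dc\f_t)^j\wedge \eta_t^{n-j}$. This cleanly separates the variation of the reference form from the variation of the potential, so that one applies Bedford--Taylor convergence for the \emph{fixed} form $\Omega$ (with $\f_t\to\f_{t_0}$ uniformly by local Lipschitz continuity) together with smooth convergence of the $\eta_t$-powers and uniform Chern--Levine--Nirenberg mass bounds. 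In the compact Hermitian setting the paper already imposes $\omega_t\le\omega_X$ up to a constant, so the dominating form exists for free, and all the ingredients you invoke (multilinearity of the Bedford--Taylor operator for bounded potentials, convergence of mixed Monge--Amp\`ere measures under uniform convergence, wedging with smooth forms, uniform mass bounds) are indeed available in the Dinew--Ko{\l}odziej / Ko{\l}odziej--Nguyen calculus, so the non-K\"ahlerness causes no trouble. The only minor addition I would make for completeness is to spell out that each term $\mu_t^j\wedge\eta_t^{n-j}-\mu_{t_0}^j\wedge\eta_{t_0}^{n-j}$ splits as $\mu_t^j\wedge(\eta_t^{n-j}-\eta_{t_0}^{n-j})+(\mu_t^j-\mu_{t_0}^j)\wedge\eta_{t_0}^{n-j}$, the first piece controlled by the uniform mass bound and $C^\infty$ convergence of $\eta_t$, the second by Bedford--Taylor; and that weak convergence of the $(j,j)$-currents against smooth test forms upgrades to convergence against the continuous test function $\chi(t_0,\cdot)$ via those same uniform mass bounds.
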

	\begin{proof} We refer to~\cite[Lemma 1.9]{guedj2020pluripotential} for more details.
	\end{proof}
	This lemma allows us to define the $(2n+1)$-current $dt\wedge(\omega_t+\dc\f_T)^n$ on $X_T$.
	\begin{definition}\label{def_lhs}
		Let $\f\in\mathcal{P}(X_T,\omega)\cap L^\infty_{\rm loc}(X_T)$. The map
		\begin{align*}
		\gamma\mapsto\int_{X_T}\chi dt\wedge(\omega_t+\dc\f_t)^n:=\int_0^Tdt\left(\int_X\chi(t,\cdot)(\omega_t+\dc\f_t)^n \right) 
		\end{align*}
		defines a positive $(2n+1)$-current on $X_T$,  denoted by $dt\wedge(\omega_t+\dc\f_t)^n$, which can be identified with a  positive Borel measure on $X_T$.
	\end{definition}
	The operator can also be defined by approximation in the spirit of Bedford and Taylor convergence results \cite{bedford1982new}:

	\begin{proposition}
	    \label{lem: convergence}
		Let $(\f^j)$ be a monotone sequence of functions in $\mathcal{P}(X_T,\omega)$ converging to a function $\f\in \mathcal{P}(X_T,\omega) \cap L^{\infty}_{\loc}(X_T)$ almost everywhere on $X_T$. Then   
		$$dt\wedge(\omega_{t}+\dc\f^j_t)^n\rightarrow dt\wedge(\omega_{t}+\dc\f_t)^n$$
		in the sense of measures on $X_T$.
	\end{proposition}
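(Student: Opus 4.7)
The plan is to reduce the convergence in $X_T$ to the classical Bedford--Taylor monotone convergence (in its Hermitian form, \cite{dinew2012pluripotential,kolodziej2015weak}) slice by slice, and then integrate in the time variable by dominated convergence, using Lemma~\ref{lem: cln} for the continuity/measurability of the slice integrals and a uniform mass bound coming from the $L^\infty_{\loc}$ hypothesis.

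First, fix a test function $\chi\in\mathcal{C}^0_c(X_T)$ and denote by $K\subset(0,T)$ the projection of $\mathrm{supp}\,\chi$ onto the time axis. By the $L^\infty_{\loc}(X_T)$ hypothesis on the monotone limit $\f$, and after possibly passing to a tail of the sequence, we may assume that the $\f^j$ are uniformly bounded on $K\times X$; say $\|\f^j\|_{L^\infty(K\times X)}+\|\f\|_{L^\infty(K\times X)}\leq M$. Since $(\f^j)$ is monotone and converges to $\f$ a.e. on $X_T$, Fubini's theorem gives a full-measure set $\mathcal{T}\subset K$ such that for every $t\in\mathcal{T}$, $\f^j_t\to\f_t$ pointwise a.e. on $X$ and monotonically.

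Next, for each such $t\in\mathcal{T}$, the functions $\f^j_t,\f_t$ are bounded $\omega_t$-psh; by the Hermitian Bedford--Taylor monotone convergence theorem (see \cite{dinew2012pluripotential,kolodziej2015weak}) we get the weak convergence
\begin{equation*}
    (\omega_t+\dc\f^j_t)^n\longrightarrow(\omega_t+\dc\f_t)^n
\end{equation*}
of Radon measures on $X$. Setting
\begin{equation*}
\Gamma^j(t):=\int_X\chi(t,\cdot)(\omega_t+\dc\f^j_t)^n,\qquad \Gamma(t):=\int_X\chi(t,\cdot)(\omega_t+\dc\f_t)^n,
\end{equation*}
Lemma~\ref{lem: cln} ensures that $\Gamma^j$ and $\Gamma$ are continuous on $(0,T)$, and the previous weak convergence gives $\Gamma^j(t)\to\Gamma(t)$ for every $t\in\mathcal{T}$.

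It remains to pass to the limit in $\int_0^T\Gamma^j(t)\,dt$. A Chern--Levine--Nirenberg type estimate in the Hermitian context (using that $\|\f^j_t\|_{L^\infty(X)}\leq M$ uniformly for $t\in K$ and the smooth dependence of $\omega_t$ on $t$, cf.\ \cite{kolodziej2015weak,guedj2021quasi}) yields a uniform bound
\begin{equation*}
|\Gamma^j(t)|\leq\|\chi\|_\infty\int_X(\omega_t+\dc\f^j_t)^n\leq C\,\|\chi\|_\infty\mathbf{1}_K(t),
\end{equation*}
with $C$ independent of $j$. Dominated convergence then gives $\int_0^T\Gamma^j\,dt\to\int_0^T\Gamma\,dt$, which is precisely the required weak convergence of the $(2n+1)$-currents on $X_T$.

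The only non-routine step is securing the uniform mass bound $\int_X(\omega_t+\dc\f^j_t)^n\leq C$ on the time interval $K$: in the Hermitian setting the total mass is not topologically fixed, but the $L^\infty$ bound on $\f^j$ combined with the smoothness of $t\mapsto\omega_t$ provides one via the Hermitian Chern--Levine--Nirenberg inequalities recalled in the references above. Everything else is a direct transcription of the K\"ahler proof in~\cite[Proposition 2.6]{guedj2020pluripotential}.
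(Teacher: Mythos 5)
Your argument is correct and is essentially the proof the paper intends: the paper simply defers to \cite[Proposition 1.11]{guedj2020pluripotential}, whose proof is exactly this slice-wise Bedford--Taylor monotone convergence combined with continuity of the slice integrals and dominated convergence in $t$. Your remark that the uniform mass bound must come from Hermitian Chern--Levine--Nirenberg/Dinew--Kolodziej estimates rather than from cohomology (cf.\ the bound invoked in Lemma~\ref{lem_aver}) is precisely the adjustment needed in the non-K\"ahler setting.
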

	\begin{proof} See~\cite[Proposition 1.11]{guedj2020pluripotential}.
	\end{proof}

	    We have the following stronger version of the Chern–Levine–Nirenberg inequalities:

\begin{proposition} Assume $\f^1,\cdots,\f^n\in\mathcal{P}(X_T,\omega)\cap L^\infty_{}(X_T)$. Then there exists a constant $C>0$ such that for any non-positive parabolic potential $\psi\in\mathcal{P}(X_T,\omega)$,
\begin{equation*}
    \begin{split}
        \int_{X_T}-\psi(\omega_t+\dc\f_t^1)\wedge\cdots\wedge(\omega_t+\dc\f_t^n)
        \leq C(\|\psi\|_{L^1(X_T)}+1)\prod_{j=1}^n(\|\f^j\|_{L^\infty(X_T)}+1).
    \end{split}
\end{equation*}
	\end{proposition}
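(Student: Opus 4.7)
The strategy is to reduce the parabolic estimate to its elliptic counterpart applied slicewise in $t$. I would first fix a Hermitian metric $\omega_X$ on $X$ with $\omega_X \geq \omega_t$ for every $t \in [0,T]$; such a choice exists because $(\omega_t)_{t \in [0,T]}$ is a smooth family on a compact interval. Since $\omega_X - \omega_t$ is then a smooth non-negative $(1,1)$-form, every $\omega_t$-psh function is automatically $\omega_X$-psh, so in particular each slice $\f_t^j$ is bounded and $\omega_X$-psh, and $\psi_t$ is $\omega_X$-psh and non-positive.

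Next I would establish the slicewise domination of mixed Monge-Amp\`ere measures
\[
(\omega_t + \dc \f_t^1) \wedge \cdots \wedge (\omega_t + \dc \f_t^n) \leq (\omega_X + \dc \f_t^1) \wedge \cdots \wedge (\omega_X + \dc \f_t^n)
\]
via the telescoping identity $\prod_k B_k - \prod_k A_k = \sum_{k=1}^n B_1 \wedge \cdots \wedge B_{k-1} \wedge (B_k - A_k) \wedge A_{k+1} \wedge \cdots \wedge A_n$, with $A_k = \omega_t + \dc\f_t^k$ and $B_k = \omega_X + \dc\f_t^k$, in which each summand is a wedge of positive currents (since $B_k - A_k = \omega_X - \omega_t \geq 0$) and is a well-defined positive measure by Bedford-Taylor theory in the Hermitian setting. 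Multiplying by $-\psi_t \geq 0$ and invoking the elliptic Chern-Levine-Nirenberg inequality with respect to the fixed Hermitian background $\omega_X$ (as established, e.g., by Dinew-Ko\l odziej, Ko\l odziej-Nguyen, or Guedj-Lu in the Hermitian setting) yields
\[
\int_X (-\psi_t)(\omega_t+\dc\f_t^1) \wedge \cdots \wedge (\omega_t+\dc\f_t^n) \leq C(1 + \|\psi_t\|_{L^1(X)}) \prod_{j=1}^n (1 + \|\f_t^j\|_\infty),
\]
for a constant $C = C(X,\omega_X,n)$ that is uniform in $t$ thanks to the uniform upper bound $\omega_t \leq \omega_X$.

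Finally I would integrate this pointwise-in-$t$ inequality over $(0,T)$ using Definition~\ref{def_lhs} and Fubini; bounding $\|\f_t^j\|_\infty \leq \|\f^j\|_{L^\infty(X_T)}$ and using $\int_0^T \|\psi_t\|_{L^1(X)}\, dt = \|\psi\|_{L^1(X_T)}$, the stray factor $T$ coming from $\int_0^T dt$ can be absorbed into the constant to produce the stated estimate. The main subtle point is justifying the monotonicity of the mixed wedge products through the telescoping identity; measurability in $t$ of the slicewise integrand, needed for Fubini, is inherited from the continuity of such integrals against continuous test functions proved in Lemma~\ref{lem: cln}, after a standard monotone approximation of $-\psi$ by non-positive continuous test functions.
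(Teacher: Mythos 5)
Your proposal is correct and rests on exactly the same two ideas as the paper's proof: a monotonicity comparison that replaces the moving background $\omega_t$ by a fixed one, followed by a Chern--Levine--Nirenberg inequality applied slice by slice and integrated in $t$. The only difference is in how you implement the comparison. The paper localizes to a finite cover by triple charts $U_i\Subset V_i\Subset W_i$, chooses a local K\"ahler potential $\rho_i$ with $\dc\rho_i>\omega_t$, bounds $(\omega_t+\dc\f^j_t)\le\dc(\rho_i+\f^j_t)$, and then applies the classical \emph{local} CLN inequality before summing over the cover. You instead stay global, take a single Hermitian $\omega_X\ge\omega_t$, prove $(\omega_t+\dc\f^1_t)\wedge\cdots\wedge(\omega_t+\dc\f^n_t)\le(\omega_X+\dc\f^1_t)\wedge\cdots\wedge(\omega_X+\dc\f^n_t)$ via the telescoping identity (correct, since $B_k-A_k=\omega_X-\omega_t\ge 0$ is smooth and each summand is a well-defined positive measure in the Hermitian Bedford--Taylor theory), and then invoke a global Hermitian CLN bound. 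Both routes are valid; the paper's is slightly more self-contained since it reduces to the local CLN inequality, which is elementary and standard, whereas a global compact-Hermitian CLN inequality of exactly the polynomial form you invoke is itself usually proved by the very localization the paper performs. Your handling of the measurability-in-$t$ needed for Fubini (via Lemma~\ref{lem: cln} and monotone approximation of $-\psi$ by continuous functions) and the absorption of the stray factor $T$ are both fine.
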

	\begin{proof}
	    In each local chart of triple covers $U_i\Subset V_i\Subset W_i$ one can find a K\"ahler form $\Omega=\dc\rho_i>\omega_t$ for all $t\in [0,T)$. Then we have
	    \begin{equation*}
    \begin{split}
        \int_{(U_i)_T}-\psi(\omega_t+\dc\f_t^1)\wedge\cdots\wedge(\omega_t+\dc\f_t^n)
        \quad\leq \int_{(U_i)_T}-\psi(\dc(\rho_i+\f_t^1))\wedge\cdots\wedge(\dc(\rho_i+\f_t^n)).
    \end{split}
\end{equation*} It follows from the local version of the C-L-N inequalities that the latter integral is bounded by \[ C\int _0^T\left(\|\psi_t\|_{L^1(V_i)}+1\right)\prod_{j=1}^n\left(\|\rho_i+\f^j_t\|_{L^\infty(V_i)}+1\right)dt <+\infty\] and the proof is completed since $\rho_i$ is uniformly under control in $V_i$.
	\end{proof}
	\begin{definition}
	    A family $\mathcal{F}\subset\mathcal{P}(X_T,\omega)$ is uniformly semi-concave in $(0,T)$ if, for any compact  interval $J\Subset(0,T)$, there exists a constant $\kappa=\kappa(\mathcal{F},J)>0$ such that any $\f\in\mathcal{F}$ is  uniformly $\kappa$-concave in $J$, i.e. for each $x\in X$, $t\mapsto\f(t,x)-\kappa t^2$ is concave in $t\in J$.
	\end{definition}
	
	Fix $\mu$ a Borel measure on X, and let $\ell$ denote the Lebesgue measure on $\mathbb{R}$.
	\begin{theorem}
	    Let $(f_j)$ be a sequence of positive functions which converges to $f$ in $L^1(X_T,\ell\otimes\mu)$. Let $(\f^j) \subset\mathcal{P}(X_T,\omega)$ be a sequence of parabolic potentials which \begin{itemize}
	        \item converges $\ell\otimes\mu$-almost everywhere in $X_T$ to a function $\f\in \mathcal{P}(X_T,\omega)$,
	        \item is uniformly semi-concave in $(0,T)$.
	    \end{itemize}
	    Then $\lim_{j\to+\infty}\dot{\f}^j(t,x)=\dot{\f}(t,x)$ for $\ell\otimes\mu$-almost every $(t,x)\in X_T $, and \begin{equation*}
	        h(\dot{\f}^j)f^j\ell\otimes\mu\longrightarrow h(\dot{\f})f\ell\otimes\mu,
	    \end{equation*} in the weak senses of measures on $X_T,$ for all $h\in\mathcal{C}^0(\mathbb{R},\mathbb{R})$.
	\end{theorem}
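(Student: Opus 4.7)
The plan is to split the argument into two parts: first, establish $\ell\otimes\mu$-a.e. convergence of the time derivatives $\dot\f^j\to\dot\f$, and then deduce the weak convergence of measures by a bounded-convergence argument. The first part is the substantive step and rests entirely on the uniform semi-concavity assumption, while the second is routine once appropriate uniform bounds are in place.

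For the a.e. convergence of derivatives, I would fix a compact subinterval $J\Subset(0,T)$ with corresponding semi-concavity constant $\kappa=\kappa(\{\f^j\},J)$, so that $g_j(t,x):=\f^j(t,x)-\kappa t^2$ and $g(t,x):=\f(t,x)-\kappa t^2$ are all concave in $t\in J$. Fubini applied to the hypothesis $\f^j\to\f$ $\ell\otimes\mu$-a.e. gives, for $\mu$-a.e. $x\in X$, that $g_j(\cdot,x)\to g(\cdot,x)$ $\ell$-a.e. on $J$. The classical principle that pointwise convergence of concave functions on a dense set (together with local boundedness, itself a consequence of semi-concavity and of the local boundedness of $\f\in\mathcal{P}(X_T,\omega)$) upgrades to pointwise convergence on the whole interior of $J$, with concave limit. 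Then at every $t$ where $g(\cdot,x)$ is differentiable---that is, outside an ($x$-dependent) countable set---the super-differentials of the $g_j(\cdot,x)$ shrink to the singleton $\{\dot g(t,x)\}$, forcing $\dot\f^j(t,x)\to\dot\f(t,x)$. A second application of Fubini and exhaustion of $(0,T)$ by such intervals yield $\dot\f^j\to\dot\f$ at $\ell\otimes\mu$-a.e. point of $X_T$.

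With this in hand, I would next extract a locally uniform bound on the derivatives: uniform semi-concavity plus a locally uniform two-sided bound on $\f^j$ on $K\Subset X_T$ (obtained from a.e. convergence, concavity of $g_j$, and local boundedness of $\f$) yields, via standard one-dimensional estimates, a bound $|\dot\f^j|\leq M(K)$ on every compact $K$. For a test function $\chi\in\mathcal{C}^0_c(X_T)$ and $h\in\mathcal{C}^0(\mathbb{R})$, I would then decompose
\begin{equation*}
\int\chi\,h(\dot\f^j)f^j\,d(\ell\otimes\mu)-\int\chi\,h(\dot\f)f\,d(\ell\otimes\mu)=I_j+II_j,
\end{equation*}
where $I_j:=\int\chi\,h(\dot\f^j)(f^j-f)\,d(\ell\otimes\mu)$ and $II_j:=\int\chi(h(\dot\f^j)-h(\dot\f))f\,d(\ell\otimes\mu)$. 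On $\mathrm{supp}(\chi)\subset K$ the derivatives $\dot\f^j$ lie in $[-M,M]$, so $h(\dot\f^j)$ is uniformly bounded; then $I_j\to 0$ follows from the $L^1$-convergence $f^j\to f$, and $II_j\to 0$ from dominated convergence applied to the a.e. convergent sequence $h(\dot\f^j)\to h(\dot\f)$ with integrable majorant $\|\chi\|_\infty\|h\|_{L^\infty([-M,M])}f$.

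The main obstacle is the first step: passing from $\ell\otimes\mu$-a.e. convergence of $\f^j$ to a.e. convergence of $\dot\f^j$. The delicate point is that the countable set of non-differentiability of $g(\cdot,x)$ depends on $x$, so the Fubini reductions must be combined carefully with the one-dimensional theory of pointwise limits of concave functions and convergence of their derivatives at differentiability points of the limit. Once this reduction is secured, the measure-level conclusion is a routine application of dominated convergence and the $L^1$-hypothesis on the densities.
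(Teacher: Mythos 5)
Your overall strategy is the same one the paper follows (which is to cite \cite[Theorem 1.14]{guedj2020pluripotential}, where the result is proved): use uniform semi-concavity to upgrade a.e.\ convergence of the potentials to a.e.\ convergence of their time-derivatives, and then pass to the limit of measures by a dominated-convergence and $L^1$-perturbation argument. The first step is carried out correctly and in the right way: Fubini to reduce to one variable, the classical fact that pointwise convergence of concave functions on a dense set implies locally uniform convergence, and convergence of the slopes at differentiability points of the concave limit. Note, however, that you should say ``on a dense set of $t$'' rather than ``$\ell$-a.e.\ on $J$'': a set of full $\ell$-measure is dense, so the claim is fine, but it is the density and not the full measure that drives the one-dimensional argument.

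The weak point is the claimed \emph{locally uniform} bound $|\dot\f^j|\leq M(K)$ on $K\Subset X_T$, which you base on a ``locally uniform two-sided bound on $\f^j$.'' This step is not justified as stated. The slice functions $x\mapsto\f^j(t,x)$ are $\omega_t$-psh; they are bounded above uniformly on compacts, but there is no reason they should be uniformly bounded below in $x$ and $j$, and mere $\ell\otimes\mu$-a.e.\ convergence to $\f$ does not supply such a lower bound (the rate of convergence, and hence any resulting Lipschitz bound on $g_j(\cdot,x)$, depends on $x$). Since $\f\in\mathcal{P}(X_T,\omega)$ only requires $\f(t,\cdot)$ to be $\omega_t$-psh, $\f$ itself need not be locally bounded; what \emph{is} uniform, because $\f\in\mathcal{P}(X_T,\omega)$, is the Lipschitz constant in $t$, i.e.\ $|\dot\f(t,x)|\leq\kappa_J$ on $J\times X$. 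What you can extract cleanly from semi-concavity plus a.e.\ convergence is: fixing points $c'<c<d<d'$ in the full-$\ell$-measure set of good times, for $\mu$-a.e.\ $x$ the sequence $\mathrm{Lip}_{[c,d]}(g_j(\cdot,x))$ converges to $\mathrm{Lip}_{[c,d]}(g(\cdot,x))\leq\kappa_J+2\kappa\,d'$, which is uniform in $x$; but this gives an \emph{eventual} bound for $j\geq j_0(x)$ with $j_0$ depending on $x$, not a bound uniform in $(j,x)$. To close the argument for the term $\int\chi\,h(\dot\f^j)(f^j-f)$, you therefore need either an additional global $L^\infty$ control on $(\f^j)$ (which is present in all the applications in the paper, where the $\f^j$ are produced with uniform a priori bounds), or a more refined device: e.g.\ truncate $h$ and combine Egorov in $x$ with the uniform (in $x$) limit of the slopes, carefully estimating the contribution of the exceptional set using the $L^1$-equi-integrability of $(f^j)$. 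In short: the skeleton of your proof is the standard one and the a.e.\ derivative convergence is correct, but the passage from ``a.e.\ derivative convergence'' to ``uniform local bound on $\dot\f^j$'' is the genuine technical content of the measure-convergence half, and it is asserted rather than proved.
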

	We refer the reader to \cite[Theorem 1.14]{guedj2020pluripotential} for a proof.

\subsection{Assumptions and Notations}\label{assumption}

In the whole article,  we let $X$ denote a compact complex manifold of complex dimension $n\geq 1$. We always denote by $\omega_X$ a Hermitian metric on $X$.  

We fix $T\in (0,+\infty]$. We are mainly concerned with finite time intervals, i.e. $T<+\infty$, and we implicitly assume that our data are possibly defined in slightly large time interval, i.e. on $(0,T+\varepsilon)$ for some $\varepsilon>0$. 
We let $X_T$ denote the real $(2n+1)$-dimensional manifold $X_T=(0,T)\times X$ with parabolic boundary 
\[\partial X_T:=\{0\}\times X. \] 
We fix a smooth semi-positive  $(1,1)$ form $\theta$ whose equivalence class is big, i.e. contains a positive (1,1) current  which dominates  a hermitian form. We fix $\rho$ a $\theta$--psh function  with analytic singularities such that
\begin{equation}\label{funct: rho}
    \theta+\dc\rho\geq \delta \omega_X\quad\text{for some }\;\delta>0.
\end{equation} We let $\Omega$ denote the {\em positive locus} of the big form $\theta$, \begin{equation}\label{ample locus}
  \Omega:=X\setminus\{\rho=-\infty\}.
\end{equation}
\subsubsection{Assumptions on the forms}\label{sect: forms}

We assume throughout the article that $(\omega_t)_{t\in[0,T)}$ is a smooth family of semi-positive (1,1) forms on $X$ satisfying
\begin{equation*}
    \theta\leq \omega_t
\end{equation*} for all $t\in[0,T)$. For finite times we can also assume, up to multiplying by a positive constant, that $\omega_t\leq \omega_X$ for all $t\in[0,T)$.

In Section~\ref{sect: estimate}, we need to assume that $t\mapsto\omega_t$  moreover satisfies for $t\in[0,T)$,
\begin{equation}\label{assump: om}
    -A\omega_t\leq\dot{\omega}_t\leq A\omega_t,
\end{equation}
and \begin{equation}
    \ddot{\omega}_t\leq A\omega_t
\end{equation} for some constant $A>0$. The lower bound~\eqref{assump: om} is equivalent to the fact that $t\mapsto e^{At}\omega_t$ is increasing. In particular, 
\[ \omega_{t+s}\geq e^{-As}\omega_t\geq (1-As)\omega_t,\quad s>0. \] The latter will be used on several occasions in  the sequel.
    %

\subsubsection{Assumptions on the densities} We assume throughout the article that
\begin{itemize}
    \item $dV$ is a fixed 
volume form on $X$;
\item $0\leq f\in L^p(X,dV)$ for some $p>1$, and $\Vol(\{f=0\})=0$;
\item $(t,x,r)\mapsto F(t,x,r)$ is a continuous function on $[0,T)\times X\times \mathbb{R}$;
\item $r\mapsto F(\cdot,\cdot,r)$ is uniformly quasi-increasing, i.e. there exists a constant $\lambda_F\geq 0$ such that for every $(t,x)\in[0,T)\times X$, the function
\begin{equation}\label{assump: increasing}
    r\mapsto F(t,x,r)+\lambda_F r\quad \text{is increasing in}\; \mathbb{R};
\end{equation}
\item $(t,r)\mapsto F(t,\cdot,r)$ is locally uniformly Lipschitz, i.e. for any $J\Subset [0,T)\times\mathbb{R}$ there exists a constant $\kappa_J>0$ such that for every $x\in X$, $(t,r), (t',r')\in J$,
\begin{equation}\label{assump: Lips}
    |F(t,x,r)-F(t',x,r')|\leq\kappa_J(|t-t'|+|r-r'|);
\end{equation}
\item $(t,r)\mapsto F(t,\cdot,r)$ is locally uniformly semi-convex, i.e. for any $J\Subset [0,T)\times\mathbb{R}$ there exists a constant $C_{J}>0$ such that for every $x\in X$,
\begin{equation}\label{assump: convex}
    (t,r)\mapsto F(t,x,r)+C_J(t^2+r^2) \quad\text{is convex in}\; J.
\end{equation}
\end{itemize}
Note that if $F$ is $\mathcal{C}^2$-smooth then the local assumptions~\eqref{assump: Lips} and~\eqref{assump: convex} are automatically satisfied, while~\eqref{assump: increasing} is a global assumption.
  
 Our  assumptions  on  the  data $F$, $f$ are  mild  enough  so  that the results of this article can be applied to the study of the Chern-Ricci flow on mildly singular Hermitian varieties. We refer to Sect.~\ref{sect: crf_lt} for more details and geometric applications.

	\section{A priori estimates}\label{sect: estimate}
	In this section we assume that $\f_t=\f(t,\cdot)$ is a smooth solution to~\eqref{cmaf} with given smooth data ($\omega_t$, $F$, $f$, $\f_0$), i.e. $t\mapsto\omega_t$ is a smooth family of Hermitian forms, $F$ and $f$ are smooth densities with $f$ being strictly positive, and $\f_0$ is  smooth and strictly $\omega_0$-psh. 
	\smallskip
	
	Our aim is to establish various a priori estimates that will allow us to construct weak solutions to the corresponding degenerate equations.  
	We will make several extra assumptions, depending on the a priori estimates that we are interested in.
	\subsection{Bounding the oscillation of $\f_t$}
	Recall that $\omega_t\geq \theta$ for all $t\in [0,T]$, where $\theta$ is a semi-positive and big (1,1) form. For finite time we can assume without loss of generality that $\omega_t\leq \omega_X$ for $t\in[0,T]$.
	It follows from~\cite[Theorem 3.4]{guedj2021quasi} (respectively~\cite[Theorem 5.8]{kolodziej2015weak}) that there exist a constant $c_-$ and a bounded $\theta$-psh function $\phi$ (respectively a constant $c_+$ and a bounded $\omega_X$-psh function $\Phi$) such that
	\begin{align*}
	(\theta+\dc \phi)^n=e^{c_-} f dV, \qquad (\omega_X+\dc\Phi)^n=e^{c_+}fdV.
	\end{align*}
	 Up to adding a constant we may assume that
	\[\sup_X\phi=\inf_X\Phi=0. \] 
	\begin{proposition}\label{prop_0dot}
		There exists a uniform constant $C_0>0$ only depending on  $\sup_X|\f_0|$, $\lambda_F$, $\inf_X\phi$, $\sup_X\Phi$, and $\sup_{X_T}F(t,x,0)$ such that
		\begin{align*}
		|\f_t(x)|\leq C_0, \quad\forall\, (t,x)\in [0,T]\times X.
		\end{align*}
	\end{proposition}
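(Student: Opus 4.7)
The plan is to bound $\f$ above and below by comparing it to the elliptic Calabi-type solutions $\Phi$ and $\phi$, via the parabolic maximum and minimum principles applied to the time-weighted barriers
\begin{equation*}
u(t,x) := e^{-\lambda_F t}\bigl(\f_t(x)-\Phi(x)\bigr) - At, \qquad v(t,x) := e^{-\lambda_F t}\bigl(\f_t(x)-\phi(x)\bigr)+Bt,
\end{equation*}
with positive constants $A,B$ to be chosen. The exponential weight $e^{-\lambda_F t}$ is tailored precisely to absorb the fact that $F$ is only quasi-increasing in $r$.

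For the upper bound, I would analyze an interior maximum point $(t_0,x_0)$ of $u$ on $[0,T]\times X$. The condition $\partial_t u(t_0,x_0)\geq 0$ produces the pointwise lower bound $\dot\f \geq Ae^{\lambda_F t_0}+\lambda_F(\f-\Phi)$, while $\dc u(t_0,\cdot)\leq 0$ at $x_0$ combined with $\omega_t\leq\omega_X$ yields, through monotonicity of determinants of non-negative $(1,1)$-forms, the Monge-Amp\`ere comparison
\begin{equation*}
(\omega_{t_0}+\dc\f_{t_0})^n(x_0)\leq(\omega_X+\dc\Phi)^n(x_0)=e^{c_+}f(x_0)\,dV(x_0).
\end{equation*}
Feeding both into \eqref{cmaf} and regrouping, one gets
\begin{equation*}
A e^{\lambda_F t_0} + \bigl(\lambda_F\f(t_0,x_0)+F(t_0,x_0,\f(t_0,x_0))\bigr)\leq c_+ + \lambda_F\sup_X\Phi.
\end{equation*}
In the subcase $\f(t_0,x_0)\geq 0$, the quasi-increasing assumption \eqref{assump: increasing} bounds the middle term below by $F(t_0,x_0,0)\geq -\sup_{X_T}|F(\cdot,\cdot,0)|$, so $A$ is a priori bounded; choosing $A$ strictly larger than the resulting constant rules out this case. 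The alternative subcases $t_0=0$ and $\f(t_0,x_0)<0$ both give $u(t_0,x_0)\leq \sup_X|\f_0|$ directly (using $\Phi\geq 0$), whence $u\leq\sup_X|\f_0|$ on $[0,T]\times X$ and unwinding the definition of $u$ yields the desired upper bound on $\f_t$.

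The lower bound is dual: applying the minimum principle to $v$ and using $\omega_t\geq\theta$ for the reversed Monge-Amp\`ere comparison $(\omega_t+\dc\f)^n\geq(\theta+\dc\phi)^n=e^{c_-}f\,dV$ at the minimum point, combined with the opposite side of the quasi-increasing condition, namely $F(t,x,r)+\lambda_F r\leq F(t,x,0)$ for $r\leq 0$, one reaches a contradiction at an interior minimum with $\f(t_0,x_0)\leq 0$ once $B$ is large enough; the remaining cases force $v\geq -\sup_X|\f_0|$, whence $\f_t\geq \inf_X\phi - C$ for a constant $C$ depending only on the listed data.

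The only nontrivial design choice is the exponential weight $e^{-\lambda_F t}$ in the barriers: it is precisely what converts the $\lambda_F(\f-\Phi)$ (resp.\ $\lambda_F(\f-\phi)$) drift produced by $\partial_t$ into the monotone combination $F+\lambda_F\f$ that assumption \eqref{assump: increasing} controls in terms of $F(t,x,0)$. Once this calibration is in place, the rest is a routine application of the parabolic maximum principle; the main obstacle is really just identifying the right barriers to make the quasi-monotonicity usable.
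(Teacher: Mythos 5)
Your argument is correct and amounts to the same approach as the paper: both bound $\f$ above and below via parabolic comparison with the elliptic Calabi solutions $\Phi$ (for $\omega_X$) and $\phi$ (for $\theta$), using the quasi-monotonicity of $F$ to absorb the $\lambda_F\f$ drift. The paper constructs the sub/supersolution barriers $\phi-\gamma(t)$ and $\Phi+\gamma(t)$ with $\gamma'-\lambda_F\gamma=C$ and cites the comparison principle, whereas you unwind that comparison directly at an extremum of $e^{-\lambda_F t}(\f-\Phi)-At$; the exponential weight is precisely the integrating factor for the paper's ODE, so the two computations are identical up to reorganization.
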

	Recall that $\lambda_F\geq 0$ is a constant    such that, for all $(t,x)\in X_T$, the function $F(t,x,r)+\lambda_F r$ is increasing on $\mathbb{R}$.
	\begin{proof} 
		Set, for any $t\in\mathbb{R}$,
		\begin{align*}
		\gamma(t):=\sup_X|\f_0|e^{\lambda_Ft}+C\frac{e^{\lambda_F t}-1}{\lambda_F},
		\end{align*}
		for $C>0$ to be chosen later. It is clear that $\gamma(0)=\sup_X|\f_0|$ and $\gamma$ satisfies the following ordinary differential equation $\gamma'(t)-\lambda_F\gamma(t)=C$.
		
		We first get a lower bound for $\f_t$. Set, for $(t,x)\in [0,T]\times X$,
		\[u(t,x):=\phi(x)-\gamma(t). \] We observe that $u_t$ is hence $\theta$-psh such that $u_0\leq \f_0$, and 
		\begin{align*}
		(\omega_t+\dc u_t)^n\geq (\theta+\dc u_t)^n=e^{c_-}fdV.
		\end{align*}
		On the other hand, since $r\mapsto F(\cdot,\cdot,r)+\lambda_Fr$ is increasing and $\phi\leq 0$, we have for $(t,x)\in [0,T]\times X$,
		\begin{align*}
		\dot{u}_t(x)+F(t,x,u_t(x))&= -\gamma'(t)+F(t,x,u_t(x))\\
		&\leq -\gamma'(t)+F(t,x,0)-\lambda_F(\phi(x)-\gamma(t))\\
		&\leq \lambda_F|\phi(x)|-C+F(t,x,0)\\
		&\leq c_-,
		\end{align*}
		choosing $C>0$ large enough. Therefore
		\begin{align*}
		(\omega_t+\dc u_t)^n\geq e^{\dot{u}_t+F(t,\cdot,u_t(\cdot))}fdV.
		\end{align*}
		Now the maximum principle ensures that $\f\geq u $ on $[0,T]\times X$.
		
	For a upper bound,	
	set for any $(t,x)\in [0,T]\times X$,
		$v(t,x):=\Phi(x)+\gamma(t)$, where $\gamma(t)$ is the solution to ODE: $\gamma'(t)-\lambda_F\gamma(t)=C$ with $C=c_+-\inf_{X_T}F(t,x,0)$, and $\gamma(0)=\sup_X\f_0$. We can check that
		\begin{align*}
		e^{\dot{v}_t+F(t,x,v_t)}fdV\geq e^{c_+}fdV= (\omega_X+\dc v_t)^n
		\end{align*}
		Since $v_0\geq \f_0$ it follows the maximum principle that $v(t,x)\geq \f(t,x)$ for any $(t,x)\in X_T$ which implies the upper bound for $\f$. More precisely, we have for any $(t,x)\in[0,T]\times X$,
		\begin{align*}
		|\f_t(x)|\leq C_0:=\sup_X|\f_0|e^{\lambda_Ft}+C\frac{e^{\lambda_F t}-1}{\lambda_F}
		\end{align*}
		where $C$ is the following uniform constant
		\begin{align*}
		C=\sup_{X_T}|F(t,x,0)|+(\lambda_F+1)\sup_X(|\phi|+|\Phi|)+\max (-c_-,c_+). 
		\end{align*}
	\end{proof}

		Let $\alpha>0$ be such that $\alpha T<1$. The following barrier constructions  will be useful in showing that the pluripotential solution to~\eqref{cmaf} has the right value at $t=0$:
	\begin{proposition}\label{prop_barrier}
	  There exists a uniform constant $C>0$ such that for all $(t,x)\in [0,T)\times X$,
		\begin{align*}
		\f_t(x)\geq (1-\alpha t)e^{-At}\f_0(x)+\alpha t\rho+n(t\log\alpha t-t)-C\frac{e^{\lambda_F t}-1}{\lambda_F}.
		\end{align*}
	\end{proposition}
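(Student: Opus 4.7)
I construct the lower barrier
$$u(t,x) := (1-\alpha t)e^{-At}\f_0(x) + \alpha t\rho(x) + n(t\log\alpha t - t) - C\frac{e^{\lambda_F t}-1}{\lambda_F}$$
for $C>0$ large, and apply the parabolic maximum principle to conclude $\f \geq u$. Note $u(0,\cdot) = \f_0 = \f(0,\cdot)$, and $u \equiv -\infty$ on $\{\rho = -\infty\}$ for $t > 0$, so the claim is trivial there; we work on $\Omega = \{\rho > -\infty\}$, where $\rho$ is smooth (analytic singularities). For the key Monge--Amp\`ere lower bound: $\dot\omega_t \geq -A\omega_t$ gives $e^{-At}\omega_0 \leq \omega_t$, and combined with $\theta \leq \omega_t$ the convex combination satisfies $(1-\alpha t)e^{-At}\omega_0 + \alpha t\theta \leq \omega_t$. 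Hence
$$\omega_t + \dc u_t \geq (1-\alpha t)e^{-At}(\omega_0 + \dc\f_0) + \alpha t(\theta + \dc\rho) \geq \alpha t\delta\omega_X,$$
so $(\omega_t + \dc u_t)^n \geq (\alpha t)^n\delta^n\omega_X^n \geq K(\alpha t)^n f\,dV$ for some $K > 0$, using boundedness of $f$ in this smooth section.

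\emph{Maximum principle argument.} Suppose for contradiction $m := \inf_{[0,T)\times\Omega}(\f - u) < 0$ (after a standard $\epsilon t$-perturbation to ensure attainment away from $t=T$, with $\epsilon \to 0$ at the end). Since $u \to -\infty$ near $\partial\Omega$ while $\f$ is bounded, the infimum is attained at some $(t_0, x_0) \in (0,T)\times\Omega$, not at $t_0 = 0$ where $\f - u = 0$. At this point $u(t_0, x_0) > \f(t_0, x_0) \geq -C_0$ by Proposition~\ref{prop_0dot}, and $u$ is bounded above by $\sup_X\f_0 + \sup_{[0,T]}|g|$, so $u(t_0, x_0)$ lies in a fixed bounded interval; in particular $F(t_0, x_0, u(t_0, x_0))$ is bounded by some $\bar F$ and $\Delta := u(t_0,x_0) - \f(t_0,x_0)$ by some $\bar\Delta$, all depending only on the data. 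The interior-minimum conditions $\dot\f = \dot u$ and $\dc(\f - u) \geq 0$ yield $(\omega_{t_0} + \dc\f)^n \geq (\omega_{t_0} + \dc u)^n$. Combining with the Monge--Amp\`ere equation for $\f$ and the lower bound above, and bounding $F(t_0, x_0, \f) \leq F(t_0, x_0, u) + \lambda_F\Delta \leq \bar F + \lambda_F\bar\Delta$ via the quasi-increasing property, after inserting
$$\dot u_t = -\bigl[\alpha + A(1-\alpha t)\bigr]e^{-At}\f_0 + \alpha\rho + n\log\alpha t - Ce^{\lambda_F t}$$
and cancelling $n\log\alpha t_0$ on both sides, we obtain
$$Ce^{\lambda_F t_0} \leq (\alpha+A)\sup_X|\f_0| + \bar F + \lambda_F\bar\Delta - \log K - n\log\delta =: C^{\ast},$$
using $\alpha\rho(x_0) \leq 0$. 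Choosing $C > C^{\ast}$ at the outset gives the desired contradiction $Ce^{\lambda_F t_0} \geq C > C^{\ast}$.

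\emph{Main obstacle.} The crucial observation is that $u$ need \emph{not} be a global subsolution of (CMAF) on $\Omega$: the Monge--Amp\`ere inequality may fail where $\rho$ is very negative (and hence $u$ very low), but at such points $\f \geq -C_0 > u$ already, so the minimum of $\f - u$ cannot occur there. The inequality only has to be verified at \emph{potential} minima, where $u \geq -C_0$ forces $u$ --- and therefore $F(t,x,u)$ --- to be controlled by universal constants. The specific form of $g(t) = n(t\log\alpha t - t) - C(e^{\lambda_F t}-1)/\lambda_F$ is engineered precisely so that $\dot g$ contributes the $n\log\alpha t$ canceling the $(\alpha t)^n$ from the Monge--Amp\`ere bound, while $-Ce^{\lambda_F t}$ furnishes strict-subsolution slack controlled by a single scalar $C$.
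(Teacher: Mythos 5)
Your overall scheme (explicit lower barrier plus maximum principle, with the ODE factor $-C\bigl(e^{\lambda_F t}-1\bigr)/\lambda_F$ furnishing the slack) matches the paper, but the concrete choice of barrier function contains a genuine gap. You build $u$ with the unbounded potential $\rho$ (analytic singularities, $\theta+\dc\rho\geq\delta\omega_X$), and to obtain a subsolution you compare $(\omega_t+\dc u_t)^n\geq(\alpha t)^n\delta^n\omega_X^n$ to $f\,dV$ via ``boundedness of $f$ in this smooth section''. That step forces the constant $K$, and hence the final $C$, to depend on $\sup_X f$. This is not a \emph{uniform} constant in the sense the statement requires: in Theorem~\ref{thm: existence} one approximates an $L^p$-density $f$ by smooth $f_j\to f$ in $L^p$, and $\sup_X f_j$ is not controlled by $\|f_j\|_{L^p}$, so your $C$ would blow up along the approximation and the barrier would be useless precisely where it is needed.

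The paper avoids this by using, in place of $\rho$, the bounded $\theta$-psh potential $\phi$ with $\sup_X\phi=0$ solving $(\theta+\dc\phi)^n=e^{c_-}f\,dV$ (introduced just before Proposition~\ref{prop_0dot}). With $\alpha t\phi$ in the barrier one gets directly $(\omega_t+\dc u_t)^n\geq(\alpha t)^n(\theta+\dc\phi)^n=(\alpha t)^n e^{c_-}f\,dV$, and the constants $c_-$ and $\|\phi\|_{L^\infty}$ depend only on $\|f\|_{L^p}$ through the Kolodziej-type estimate in \cite[Theorem 3.4]{guedj2021quasi}. Because $\phi$ is \emph{bounded}, $u$ becomes a genuine global subsolution, $F(t,x,u)$ is automatically controlled, and the classical maximum principle applies without the localization workaround in your ``Main obstacle'' paragraph. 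Since $\phi\geq\rho+c$ for a constant $c$ (as $\phi$ is bounded below and $\rho$ bounded above), the statement with $\rho$ then follows after absorbing $c$ into $C$ using $t\leq(e^{\lambda_F t}-1)/\lambda_F$; this is why the proposition can be phrased with $\rho$ even though the proof runs with $\phi$. To repair your argument, replace $\rho$ by $\phi$ in the barrier and drop the ad hoc comparison $\omega_X^n\geq Kf\,dV$; the maximum-principle bookkeeping you wrote then goes through unchanged and gives a genuinely uniform $C$.
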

		
	\begin{proof} Recall  that  $\dot{\omega}_t\geq -A\omega_t$ for some constant $A>0$. In particular $\omega_t\geq e^{-At}\omega_0$ and $\omega_t\geq \theta$.
		Consider for any $(t,x)\in [0,T)\times X$,
		\begin{align*}
		u(t,x):=(1-\alpha t)e^{-At}\f_0+\alpha t\phi_-+n(t\log\alpha t-t)-C\frac{e^{\lambda_Ft}-1}{\lambda_F}
		\end{align*}
		for $C>0$ a uniform constant. 
		We proceed the same as in~\cite[Proposition 2.2]{guedj2020pluripotential} to show that
		\begin{align*}
		(\omega_t+\dc u_t)^n\geq e^{\dot{u}_t+F(t,\cdot,u_t)}fdV,
		\end{align*}
		and moreover $u_0\leq \f_0$.
		Therefore the maximum principle yields the desired estimate. 
	\end{proof}
	
	\subsection{Lipschitz control in time}
	Our goal in this section is to establish an a priori bound which allows us to show that the solutions $\f$ to degenerate complex Monge-Amp\`ere flows~\eqref{cmaf} are locally uniformly Lipschitz in time, away from zero.

	We recall here that there exists a constant $A>0$ such that
	\begin{equation}
	    -A\omega_t\leq \dot{\omega}_t\leq A\omega_t\;\, \forall \, t\in [0,T].
	\end{equation}
	Recall also that $F(t,x,r)$ is quasi-increasing in $r$ i.e. there exists a constant $\lambda_F>0$ such that for every $(t,x)\in [0,T)\times X$, the function $r\mapsto F(t,x,r)+\lambda_F r$ is increasing in $\mathbb{R}$.
	\begin{proposition}\label{prop_1dot}
		There exists a uniform constant $C>0$ such that for all $(t,x)\in X_T$,
		\begin{align*}
	 n\log t-C	\leq \dot{\f}_t(x)\leq \frac{C}{t}.
		\end{align*}
	\end{proposition}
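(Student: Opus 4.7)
The argument is an application of the parabolic maximum principle after differentiating \eqref{cmaf} in time. Write $\omega'_t := \omega_t+\dc\f_t$, let $\Delta':=\tr_{\omega'_t}\dc$, and set $L:=\partial_t-\Delta'$. Differentiating the scalar form of the equation
\[
\log\frac{(\omega'_t)^n}{fdV}=\dot{\f}_t+F(t,x,\f_t)
\]
in $t$ yields
\[
L\dot{\f}_t \;=\; \tr_{\omega'_t}\dot{\omega}_t-F_t(t,x,\f_t)-F_r(t,x,\f_t)\dot{\f}_t,
\qquad L\f_t \;=\; \dot{\f}_t-n+\tr_{\omega'_t}\omega_t.
\]
Both identities hold pointwise on $X_T$ since $\f_t$ is smooth, and they are the engine of both estimates. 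The uniform bound $|\f_t|\le C_0$ from Proposition~\ref{prop_0dot} will be used throughout, as will the standing assumptions $-A\omega_t\le\dot{\omega}_t\le A\omega_t$ and the Lipschitz/quasi-increasing properties of $F$.

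\textbf{Upper bound.} I would apply the maximum principle to the auxiliary function
\[
H_+(t,x) \;:=\; t\dot{\f}_t(x) \;-\; A_0\bigl(\f_t(x)-\inf_{X_T}\f\bigr) \;-\; B_0 t,
\]
with constants $A_0,B_0>0$ chosen large. A direct computation combining the two identities above gives
\[
LH_+ \;=\; t\,\tr_{\omega'_t}\dot{\omega}_t - tF_t-tF_r\dot{\f}_t + \dot{\f}_t - A_0\dot{\f}_t + A_0 n - A_0\tr_{\omega'_t}\omega_t - B_0.
\]
Using $\dot{\omega}_t\ge-A\omega_t$, for $t$ less than $1/(2A)$ the term $t\tr_{\omega'_t}\dot{\omega}_t$ is absorbed by $\tfrac{1}{2}A_0\tr_{\omega'_t}\omega_t$ as soon as $A_0\ge 2AT$. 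Choosing $A_0,B_0$ larger than the Lipschitz constants of $F$ and the $L^\infty$ bound $C_0$, one sees that at an interior maximum $(t_0,x_0)$ of $H_+$ the inequality $LH_+(t_0,x_0)\ge0$ forces $t_0\dot{\f}_{t_0}(x_0)\le C$. Since $H_+(0,\cdot)=-A_0(\f_0-\inf_{X_T}\f)$ is already bounded, this yields $\dot{\f}_t\le C/t$ on all of $X_T$.

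\textbf{Lower bound.} For the lower bound I would apply the minimum principle to
\[
H_-(t,x) \;:=\; \dot{\f}_t(x) - n\log t + Kt
\]
on $[t_0,T)\times X$ with $t_0>0$ small. Using the differentiated equation together with $\dot{\omega}_t\le A\omega_t$ and $F_r\ge-\lambda_F$ (i.e.\ the quasi-increasing assumption on $F$), one computes $LH_-\ge -n/t+K-A\tr_{\omega'_t}\omega_t-\kappa+\kappa\dot{\f}_t^-$; at an interior minimum $LH_-\le0$, which combined with the AM--GM inequality $\tr_{\omega'_t}\omega_t\ge n(\omega_t^n/(\omega'_t)^n)^{1/n}=n\,c\,e^{-(\dot{\f}_t+F(t,x,\f_t))/n}$ and the equation itself, produces a contradiction as soon as $\dot{\f}_t(x_0)$ is much smaller than $n\log t_0-C$. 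Hence the minimum is attained on the parabolic boundary $\{t=t_0\}$. To treat the boundary $\{t=t_0\}$ as $t_0\to0^+$, I would use the barrier $\psi_t$ from Proposition~\ref{prop_barrier}: $\f_t-\psi_t\ge0$ with equality at $t=0$, and an explicit computation of $\dot{\psi}_t=-[\alpha+A(1-\alpha t)]e^{-At}\f_0+\alpha\rho+n\log(\alpha t)-Ce^{\lambda_F t}$ then transfers the bound $\dot{\f}_t\ge\dot{\psi}_t+o(1)\ge n\log t-C'$ to $(0,T)$.

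\textbf{Main obstacle.} The new difficulty compared to the K\"ahler setting \cite{guedj2020pluripotential} is the appearance of the torsion term $\tr_{\omega'_t}\dot{\omega}_t$ in the differentiated equation, which has no definite sign. Controlling it requires both bounds $-A\omega_t\le\dot{\omega}_t\le A\omega_t$ and a careful tuning of the coefficients $A_0,B_0,K$ so that this trace term can be absorbed into a negative multiple of $\tr_{\omega'_t}\omega_t$ coming from $L\f_t$. The lower bound near $t=0$ is the more delicate of the two, because the logarithmic blow-up $-n\log t$ prevents a direct application of the minimum principle on all of $[0,T)\times X$; its treatment is where the Proposition~\ref{prop_barrier} barrier (and implicitly the bigness of $\theta$) is essential.
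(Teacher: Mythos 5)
Your two identities for $L\dot\f_t$ and $L\f_t$ are correct and your treatment of the \emph{upper} bound via $H_+ = t\dot\f_t - A_0(\f_t-\inf\f) - B_0t$ is essentially sound: the torsion trace $t\,\tr_{\omega'_t}\dot\omega_t$ is indeed absorbed by $-A_0\tr_{\omega'_t}\omega_t$ once $A_0\geq AT$, and then $(A_0-1+tF_r)\dot\f_t\leq A_0n - B_0 - tF_t$ at an interior maximum, which is what one wants. (Two small slips: you cite $\dot\omega_t\geq -A\omega_t$ where you actually need $\dot\omega_t\leq A\omega_t$ to bound $t\tr_{\omega'_t}\dot\omega_t$ from above, and the restriction ``$t<1/(2A)$'' is unnecessary --- taking $A_0\geq AT$ works on all of $[0,T]$.) This is, up to cosmetics, the argument of \cite[Theorem 2.4]{guedj2020pluripotential}, which the paper invokes verbatim.

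The \emph{lower} bound, however, has two genuine gaps that cannot be repaired within your framework. First, the claimed contradiction at an interior minimum of $H_- = \dot\f_t - n\log t + Kt$ does not close. At such a minimum, $L H_-\leq 0$ gives
$A\tr_{\omega'_t}\omega_t\geq K - n/t - \kappa - \lambda_F\,|\dot\f_t|$ (the sign of the $F_r\dot\f_t$ contribution works \emph{against} you when $\dot\f_t\ll 0$), so the right-hand side plunges to $-\infty$ precisely when $\dot\f_t$ is very negative, and there is no contradiction. You then invoke $\tr_{\omega'_t}\omega_t\geq n\bigl(\omega_t^n/(\omega'_t)^n\bigr)^{1/n}$ and write this as $n\,c\,e^{-(\dot\f_t+F)/n}$ with a uniform $c$; but
$\bigl(\omega_t^n/(\omega'_t)^n\bigr)^{1/n}=\bigl(\omega_t^n/(fdV)\bigr)^{1/n}e^{-(\dot\f_t+F)/n}$
and the factor $\bigl(\omega_t^n/(fdV)\bigr)^{1/n}$ is \emph{not} bounded below by a uniform constant: $\omega_t$ is only semi-positive and $f$ is allowed to vanish, so even along the smooth approximating flows the constant degenerates and the resulting estimate is not uniform. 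Second, the proposed passage to the parabolic boundary $\{t=t_0\}$ as $t_0\to0^+$ via Proposition~\ref{prop_barrier} does not work: $\f_t\geq\psi_t$ with equality at $t=0$ gives information about the one-sided derivatives at $t=0$ only, and says nothing about the relation between $\dot\f_{t_0}$ and $\dot\psi_{t_0}$ for $t_0>0$. So you never obtain a uniform bound on $\min_X\dot\f_{t_0}$ as $t_0\to0$.

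The paper's proof (cf.\ \cite[Theorem 2.5]{guedj2020pluripotential}) handles the lower bound by a completely different mechanism: a translation-in-time comparison, not a pointwise maximum principle. Roughly, one fixes a small $s>0$ and constructs from $\f_t$ a \emph{subsolution} $w_t$ of the shifted flow (with forms $\omega_{t+s}$ and density $F(t+s,\cdot,\cdot)$) of the form $w_t=(1-\lambda)\f_t+\lambda\phi+c(s,t)$, where $\phi$ is the elliptic barrier solving $(\theta+\dc\phi)^n=e^{c_-}fdV$, $\lambda\sim s$, and the additive term carries the $ns\log s$ contribution; the key inputs are the Hermitian mixed Monge--Amp\`ere inequality (Lemma~\ref{lem: mixed_ineq}) and $\omega_{t+s}\geq e^{-As}\omega_t$. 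Comparison then gives $w_t\leq\f_{t+s}$, hence $\f_{t+s}-\f_t\geq ns\log s-Cs+O(\lambda)$, and dividing by $s$ and letting $s\to0^+$ produces $\dot\f_t\geq n\log t - C$ with constants that \emph{are} uniform in the approximation. This is the monotonicity statement the paper records in Theorem~\ref{thm: existence} (the map $t\mapsto\f_t - n(t\log t - t)+Ct$ is increasing) and it is the step your sketch is missing.
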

	
	\begin{proof} 
	The proof is identical to that of~\cite[Theorem 2.4, 2.5]{guedj2020pluripotential}.
	Note here that $C$ depends explicitly on  $T$, $C_0$ (defined in Proposition~\ref{prop_0dot}), and upper bounds for $\|f\|_{L^p}$, 
	\[ \sup_{[0,T)\times X\times [-C_0\times C_0]}\frac{\partial F}{\partial t},\; \text{and}\; \sup_{[0,T)\times X\times [-C_0\times C_0]}\frac{\partial F}{\partial r}.\]

	\end{proof}
	\subsection{Semi-concavity in time} We now establish that $\omega_t$-psh solutions $\f_t$ to~\eqref{cmaf} are semi-concave in time away from zero.
	
	We assume in this subsection that there exists a constant $A>0$ such that, for all $t\in [0,T]$, 
	\begin{equation}\label{ass: omega}
	    -A\omega_t\leq\dot{\omega}_t\leq A\omega_t,\;\, \text{and}\,\; \ddot{\omega}_t\leq A\omega_t.
	\end{equation}
	We also assume that $(t,r)\mapsto F(t,x,r)$ is uniformly semi-convex, i.e. there exists a constant $C_F>0$ such that for every $x$, the function $$(t,x)\mapsto F(t,x,r)+C_F(t^2+r^2) \;\text{is convex on}\; [0,T]\times [-C_0,C_0].$$

	\begin{proposition}\label{prop_2dot}
		There exists a uniform constant $C>0$ such that for all $(t,x)\in (0,T]$,
		\begin{align*}
		\ddot{\f}_t(x)\leq \frac{C}{t^2},
		\end{align*} where $C$ depends explicitly on $T$, $\|f\|_{L^p}$, $C_F$, $\sup_{[0,T]\times X}|\f|$, $\sup_{[0,T]\times X}t|\dot{\f}|$, and the $L^\infty$-norms of $\partial F/\partial r$, $\partial F/\partial t$.
	\end{proposition}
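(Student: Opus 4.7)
The plan is to apply the parabolic maximum principle to the auxiliary function
\[
H(t,x) := t^2\ddot{\f}_t - B\f_t,\qquad B := AT^2,
\]
on $\bar{X}_T$, adapting the strategy of~\cite[Theorem 2.7]{guedj2020pluripotential} to the Hermitian setting. Set $\omega_t' := \omega_t + \dc\f_t$ and $\Box := \partial_t - \Delta_t'$ where $\Delta_t' u := \tr_{\omega_t'}(\dc u)$. The equation then reads $\log[(\omega_t')^n/dV] = \dot{\f}_t + F(t,x,\f_t) + \log f$. Differentiating once in $t$ gives $\tr_{\omega_t'}(\dot{\omega}_t + \dc\dot{\f}_t) = \ddot{\f}_t + F_t + F_r\dot{\f}_t$, and differentiating a second time, using the standard identity for $\partial_t\tr_{\omega_t'}$, produces a term $-|\dot{\omega}_t + \dc\dot{\f}_t|^2_{\omega_t'}$. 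The Hermitian Cauchy--Schwarz inequality $|\cdot|^2_{\omega_t'}\geq \tfrac{1}{n}(\tr_{\omega_t'}\cdot)^2$ then yields
\[
\Box\ddot{\f}_t \leq \tr_{\omega_t'}\ddot{\omega}_t - \tfrac{1}{n}\bigl(\ddot{\f}_t + F_t + F_r\dot{\f}_t\bigr)^2 - F_{tt} - 2F_{tr}\dot{\f}_t - F_{rr}\dot{\f}_t^2 - F_r\ddot{\f}_t.
\]
The $-\tfrac{1}{n}\ddot{\f}_t^2$ hidden in the squared term is the penalty that will make the maximum principle effective.

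Next I would exploit the identity $\tr_{\omega_t'}\omega_t = n - \Delta_t'\f_t$ and the hypothesis $\ddot{\omega}_t\leq A\omega_t$ to obtain $\tr_{\omega_t'}\ddot{\omega}_t\leq A\tr_{\omega_t'}\omega_t$. Since $\Box\f_t = \dot{\f}_t - n + \tr_{\omega_t'}\omega_t$, applying $\Box$ to $H$ and collecting $\tr_{\omega_t'}\omega_t$ with coefficient $(At^2 - B)$, the choice $B = AT^2$ makes this coefficient non-positive and the trace term is discarded. At an interior maximum $(t_0,x_0)$ of $H$, $\Box H \geq 0$; combining this with the bounds $\|\f_t\|_{\infty}\leq C_0$ from Proposition~\ref{prop_0dot}, $|\dot{\f}_t|\leq C_1/t$ from Proposition~\ref{prop_1dot}, and the $L^\infty$-control of $F_t,F_r,F_{tt},F_{tr},F_{rr}$ (via smoothness and $C_F$), one obtains a quadratic-in-$\ddot{\f}_{t_0}$ inequality of the shape
\[
\tfrac{t_0^2}{n}\ddot{\f}_{t_0}^2 \leq C_2 t_0\ddot{\f}_{t_0} + C_2\bigl(1 + 1/t_0\bigr),
\]
which yields $t_0^2\ddot{\f}_{t_0}\leq C_3$ by elementary algebra. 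Since $H|_{t=0} = -B\f_0$ is bounded, the maximum principle delivers the estimate $t^2\ddot{\f}_t \leq C_3 + 2B\|\f\|_{\infty}$ on $\bar{X}_T$.

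The chief obstacle compared to the Kähler setting is the Hermitian trace correction $\tr_{\omega_t'}\ddot{\omega}_t$ (and analogously $\tr_{\omega_t'}\dot{\omega}_t$), which arises from the non-closedness of $\omega_t$ and has no a priori bound. The assumption $\ddot{\omega}_t\leq A\omega_t$ only reduces it to the uncontrolled quantity $A\tr_{\omega_t'}\omega_t$. The key observation is that the contribution $-B\,\Box\f_t = -B\dot{\f}_t + Bn - B\tr_{\omega_t'}\omega_t$ supplies a compensating $-B\tr_{\omega_t'}\omega_t$, and one takes $B$ large enough ($B = AT^2$) to absorb the offending term. The subsidiary issue that $\dot{\f}_t$ has a logarithmic lower bound $n\log t - C$ is harmless because the relevant $\dot{\f}_t$-dependent terms are all handled by the one-sided bound $|\dot{\f}_t|\leq C/t$ from Proposition~\ref{prop_1dot}.
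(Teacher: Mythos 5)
Your proof is correct and follows the same strategy that the paper invokes by referring to \cite[Theorems 2.7, 2.9]{guedj2020pluripotential}: differentiate \eqref{cmaf} twice in time, apply Cauchy--Schwarz to the $-|\dot\omega_t+\dc\dot\f_t|^2_{\omega_t'}$ term to extract the $-\tfrac{t^2}{n}\ddot\f_t^2$ penalty, and run a parabolic maximum principle on an auxiliary quantity of the form $t^2\ddot\f_t$ minus a lower-order correction. Your specific choice $H=t^2\ddot\f_t-B\f_t$ with $B=AT^2$ is exactly what is needed to absorb the Hermitian trace term $\tr_{\omega_t'}\ddot\omega_t\leq A\tr_{\omega_t'}\omega_t$ via $-B\Box\f_t=-B\dot\f_t+Bn-B\tr_{\omega_t'}\omega_t$, which is the issue the paper flags when it notes that Lemma 2.8 of \cite{guedj2020pluripotential} persists for Hermitian, non-closed forms.
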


	\begin{proof}
The proof is almost identical to that of~\cite[Theorem 2.7, 2.9]{guedj2020pluripotential}. We mention here that Lemma~2.8 in~\cite{guedj2020pluripotential} still holds when $\omega$ is Hermitian and $\eta$ is no longer closed.
	\end{proof}

	\begin{theorem}\label{thm_211}
		Let $(f_j)_{j\in\mathbb{N}}$ be a sequence of $L^p(X)$-densities converging towards $f$ in $L^1(X)$. Let $F_j(t,x,r)$ be continuous densities which uniformly converge towards  $F$. Let $\f_j(t,x)$ be a family of $\omega_t$-psh functions such that
		\begin{itemize}
			\item $\f_j$ are uniformly bounded,
			
			\item for any $x\in X$, $\ddot{\f}_j(t,x)\leq C/t^2$ for some uniform constant $C>0$.
		\end{itemize}
		Then there exists a bounded function $\f\in\mathcal{P}(X_T,\omega)$ such that, up to passing a subsequence, $\f_j\to \f$ in $L^1_{\rm loc}(X_T)$, 
		and
		\begin{align*}
		e^{\dot{\f}_j+F_j(t,x,\f_j(t,x))}f_j(x)dV(x)\wedge dt\rightarrow e^{\dot{\f}+F(t,x,\f(t,x))}f(x)dV(x)\wedge dt
		\end{align*}
		in the weak sense of measures on $X_T$.
	\end{theorem}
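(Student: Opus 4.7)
The plan is to first extract a subsequential limit of $(\f_j)$ as a parabolic potential using the compactness proposition of Section~\ref{sect: pp}, and then pass to the limit in the right-hand side by applying the convergence theorem for uniformly semi-concave families stated just before Section~\ref{assumption}.

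First, I would exploit the hypothesis $\ddot{\f}_j(t,x)\leq C/t^2$: on any compact interval $J=[a,b]\Subset(0,T)$ the function $t\mapsto\f_j(t,x)-\tfrac{C}{2a^2}t^2$ is concave, so $(\f_j)$ is uniformly semi-concave on $(0,T)$. Combined with the uniform $L^\infty$ bound $\|\f_j\|_{L^\infty(X_T)}\leq M$, a standard one-sided-derivative argument for concave functions yields a uniform Lipschitz estimate in $t$ on compact subsets of $(0,T)$. This is exactly the hypothesis of the compactness result for parabolic potentials, which produces (after passing to a subsequence, still denoted $(\f_j)$) a limit $\f\in\mathcal{P}(X_T,\omega)\cap L^\infty(X_T)$ with $\f_j\to\f$ in $L^1_{\loc}(X_T)$, hence also almost everywhere. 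The limit $\f$ inherits the uniform $L^\infty$ bound and the uniform semi-concavity in $t$.

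Next, I would verify $L^1$-convergence of the outer density
\begin{equation*}
g_j(t,x):=e^{F_j(t,x,\f_j(t,x))}f_j(x)\;\longrightarrow\;g(t,x):=e^{F(t,x,\f(t,x))}f(x).
\end{equation*}
Since the $\f_j$ are uniformly bounded and $F_j\to F$ uniformly on the compact set $[0,T]\times X\times[-M,M]$, the factors $e^{F_j(t,x,\f_j(t,x))}$ are uniformly bounded and, by continuity of $F$ together with the a.e. convergence $\f_j\to\f$, they converge a.e. to $e^{F(t,x,\f(t,x))}$. Dominated convergence gives convergence in every $L^r$; combined with $f_j\to f$ in $L^1(X)$ this forces $g_j\to g$ in $L^1(X_T,dt\otimes dV)$, by splitting
\begin{equation*}
g_j-g=\left(e^{F_j(\cdot,\cdot,\f_j)}-e^{F(\cdot,\cdot,\f)}\right)f+e^{F_j(\cdot,\cdot,\f_j)}(f_j-f)
\end{equation*}
and estimating the two terms separately.

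Finally, I would apply the cited convergence theorem with the continuous function $h(r)=e^r$, the reference measure $\mu=dV$, the densities $g_j$ playing the role of $f^j$, and the uniformly semi-concave family $(\f_j)$ converging a.e.\ to $\f$. The theorem delivers simultaneously $\dot{\f}_j\to\dot{\f}$ a.e.\ and
\begin{equation*}
e^{\dot{\f}_j}g_j\, dV\wedge dt\longrightarrow e^{\dot{\f}}g\, dV\wedge dt
\end{equation*}
weakly as measures on $X_T$, which is exactly the desired statement. The main obstacle I expect is the first step: converting the pointwise semi-concavity bound $\ddot{\f}_j\leq C/t^2$ into a genuine uniform Lipschitz estimate in $t$ on compact subsets of $(0,T)$, so that the compactness proposition applies and produces an honest element of $\mathcal{P}(X_T,\omega)$ in the sense of Definition~\ref{def: pp}. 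Once this is in hand, everything else reduces to a clean packaging of the $j$-dependent right-hand side as a single $L^1$-convergent sequence of densities to which the cited convergence theorem directly applies.
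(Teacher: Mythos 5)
Your proof is correct and takes essentially the same route as the paper, which simply defers to the corresponding K\"ahler-case statement in Guedj--Lu--Zeriahi: you convert the second-order bound $\ddot{\f}_j\leq C/t^2$ into local uniform semi-concavity, hence (via bounded one-sided slopes of a bounded concave function) into local uniform Lipschitz continuity in $t$, then invoke the compactness proposition for $\mathcal{P}(X_T,\omega)$ and finally the convergence theorem for uniformly semi-concave families with $h(r)=e^r$ after repackaging the right-hand side as a single $L^1$-convergent density $g_j=e^{F_j(\cdot,\cdot,\f_j)}f_j$. The only small wording issue is that ``$\f_j\to\f$ in $L^1_{\loc}$, hence also almost everywhere'' should read ``hence, after extracting a further subsequence, almost everywhere,'' which is harmless here since the statement already allows passing to subsequences.
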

	\begin{proof} We refer to~\cite[Theorem 2.11]{guedj2020pluripotential} for more details.
		
		
	\end{proof}
	\subsection{Stability estimates}
	\begin{proposition}\label{stab1}
		Let  $0<\varepsilon<T'<T$. 
	Assume that $\f^1,\f^2\in\mathcal{P}(X_T,\omega)\cap \mathcal{C}^{\infty}(X_T)$  solve
		\begin{align*}
		dt\wedge (\omega_t+\dc \f_t^i)^n=e^{\dot{\f}_t^i+F_i(t,\cdot,\f_t^i)}f_idV\wedge dt,
		\end{align*}
		with assumptions that $(F_i,f_i)$ are smooth for $i=1,2$. 
		Then for all $(t,x)\in [\varepsilon,T']\times X$,
		\begin{align*}
		|\f^1(t,x)-\f^2(t,x)|\leq B\left(\|\f^1-\f^2\|_{L^1(X_T)}^\gamma+\|f_1-f_2\|_{L^p(X)}\right)^{ \frac{1}{n}},
		\end{align*}
		where $0<\gamma=\gamma(n,p)$ while $0<B$  depends on $\varepsilon$, $T'$, $\theta$, $\omega_X$, and  upper bounds for $\|f_i\|_{L^{\frac{1}{n}}}$, $\|f_i\|_{L^p}$, $\|\f^i\|_{L^\infty(X_T)}$, and $\sup_{[\varepsilon,T]\times X}|\dot{\f}^i_t|$.
	\end{proposition}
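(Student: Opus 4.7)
The plan is to reduce Proposition~\ref{stab1} to the elliptic Kolodziej-type stability estimate (Theorem~\ref{thm: stability}) applied slice by slice in time, and then to convert the resulting spatial $L^1$-norms into the spacetime $L^1(X_T)$-norm via the uniform Lipschitz bound of Proposition~\ref{prop_1dot}. For each fixed $t\in[\varepsilon,T']$ the slice $\f^i_t$ is smooth, bounded and $\omega_t$-psh, and it solves the elliptic Monge-Amp\`ere equation $(\omega_t+\dc\f^i_t)^n = g_i(t,\cdot)\,dV$ with density $g_i(t,x):=\exp\bigl(\dot\f^i_t(x)+F_i(t,x,\f^i_t(x))\bigr)f_i(x)$. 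The a priori bounds from Propositions~\ref{prop_0dot} and~\ref{prop_1dot}, combined with the Lipschitz hypothesis on $F_i$, guarantee that on $[\varepsilon,T']\times X$ the exponential factor lies between two positive constants depending only on the quantities entering $B$; thus each $g_i(t,\cdot)$ has uniformly bounded $L^p$-norm and uniformly positive $\int_X g_i^{1/n}\,dV$, so Theorem~\ref{thm: stability} applies with constants uniform in $t$.

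At a fixed $t$, Theorem~\ref{thm: stability} yields
\[
\|\f^1_t-\f^2_t\|_{L^\infty(X)}^n \leq C\bigl(\|\f^1_t-\f^2_t\|_{L^1(X)}^\alpha+\|g_1(t,\cdot)-g_2(t,\cdot)\|_{L^p(X)}\bigr).
\]
To convert the spatial $L^1$-norm into the spacetime one I use the uniform Lipschitz constant $L:=\sup_{[\varepsilon,T']\times X}|\dot\f^i|$: the pointwise inequality $|\f^i_t(x)-\f^i_s(x)|\leq L|t-s|$ gives $\|\f^1_t-\f^2_t\|_{L^1(X)}\leq \|\f^1_s-\f^2_s\|_{L^1(X)}+2L|t-s|\,\Vol(X)$, and averaging over $s$ in a window of length $2\delta$ followed by optimization in $\delta$ yields $\|\f^1_t-\f^2_t\|_{L^1(X)}\leq C\|\f^1-\f^2\|_{L^1(X_T)}^{1/2}$, which produces the exponent $\gamma$ appearing in the statement.

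The main technical step is to estimate the $L^p$-density difference. Setting $a_i:=\dot\f^i_t+F_i(t,\cdot,\f^i_t)$ and exploiting $|e^{a_1}-e^{a_2}|\leq e^{\max(a_1,a_2)}|a_1-a_2|$ together with the Lipschitz hypothesis on $F_i$ reduces the bound on $\|g_1-g_2\|_{L^p}$ to $\|f_1-f_2\|_{L^p}$ plus error terms of the form $\|f_i\|_{L^p}\bigl(\|\dot\f^1_t-\dot\f^2_t\|_\infty+\kappa\|\f^1_t-\f^2_t\|_\infty\bigr)$. The $\|\f^1-\f^2\|_\infty$ contribution will be absorbed into the left-hand side by Young's inequality after taking the supremum in $t$; to handle $\|\dot\f^1_t-\dot\f^2_t\|_\infty$ I will apply the parabolic maximum principle to $w:=\f^1-\f^2$: at an interior spacetime maximum the inequality $\dc w\leq 0$ forces $(\omega_t+\dc\f^1)^n\leq(\omega_t+\dc\f^2)^n$, so comparing with the PDE, the quasi-increasing property of $F$, and Gr\"onwall bound $\sup_{[\varepsilon,T']\times X}w$ by its value on an earlier slice, at which Theorem~\ref{thm: stability} can be applied again. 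Iterating this comparison and bootstrapping gives the claimed inequality, with the same argument run for $-w$ yielding the two-sided bound and $\gamma=\alpha/2$, along the lines of~\cite[Prop.~2.12]{guedj2020pluripotential}.

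The principal obstacle is precisely the joint control of the two time derivatives: Proposition~\ref{prop_1dot} bounds each $\dot\f^i$ only individually, so the refined estimate on $\dot\f^1-\dot\f^2$ must be extracted from the parabolic structure itself rather than from a direct a priori estimate. This is also the reason the conclusion is restricted to the slab $[\varepsilon,T']$ away from $t=0$, where $\dot\f^i$ is under control, and why the constant $B$ is allowed to depend explicitly on $\sup|\dot\f^i|$ and on the $L^p$-norm of the densities $f_i$.
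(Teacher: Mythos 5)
Your first two steps reproduce the paper's proof exactly: apply the elliptic $L^{1}$\,--\,$L^{\infty}$ stability estimate (Theorem~\ref{thm: stability}) slice by slice to the effective densities $g_i(t,\cdot):=e^{\dot\f^i_t+F_i(t,\cdot,\f^i_t)}f_i$, using Propositions~\ref{prop_0dot} and~\ref{prop_1dot} to get uniform $L^{p}$ and $L^{1/n}$ control of the $g_i(t,\cdot)$ on $[\varepsilon,T']$, then convert the slice $L^{1}(X)$ norm to the spacetime $L^{1}(X_T)$ norm via the uniform Lipschitz bound (the paper invokes \cite[Lemma 1.5]{guedj2020pluripotential}, which is your averaging-in-time argument). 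Up to this point you are aligned with the source.

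Your third step, however, does not close the argument. The slice estimate leaves you with $\|g_1(t,\cdot)-g_2(t,\cdot)\|_{L^{p}}$ on the right-hand side, and the two devices you propose to reduce this to $\|f_1-f_2\|_{L^{p}}$ both fail. First, the Young absorption: the slice inequality has the shape $M_t^{\,n}\leq C_1\|\f^1_t-\f^2_t\|_{L^1}^{\alpha}+C_2\|f_1-f_2\|_p+cM_t+cD_t$ with $M_t:=\|\f^1_t-\f^2_t\|_{\infty}$, $D_t:=\|\dot\f^1_t-\dot\f^2_t\|_{\infty}$, where $c$ is a \emph{fixed} constant of order $L_F\|f_1\|_{L^p}$ that you cannot make small. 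For $n\geq 2$, splitting $cM_t\leq\tfrac12 M_t^{\,n}+C(n,c)$ leaves the Young constant $C(n,c)$ which does not vanish with the other terms, so the resulting bound on $M_t$ is trivial. (In the proof of Theorem~\ref{thm: stability} the analogous $|\f_1-\f_2|$ factor is removed by Hölder interpolation down to $\|\f_1-\f_2\|_{L^1}^{\alpha}$, not by absorption; that trick does not transplant to the $\dot\f$ piece because there is no smallness for $\dot\f^1-\dot\f^2$ in any integral norm.) Second, the parabolic maximum principle applied to $w:=\f^1-\f^2$ controls $\sup w$, not $\|\dot w\|_{\infty}$: at an interior spacetime maximum the PDE only yields $\dot\f^1+F_1(\cdot,\f^1)-\dot\f^2-F_2(\cdot,\f^2)\leq\log(f_2/f_1)$, and $\log(f_2/f_1)$ is merely an $L^{p}$-type quantity, not pointwise bounded; falling back to "an earlier slice" reinstates the same density-difference term, so the proposed iteration is circular rather than convergent. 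Proposition~\ref{prop_1dot} bounds each $|\dot\f^i|$ separately and gives nothing on the difference, exactly as you observe; your scheme does not manufacture such a bound. To actually close the argument you would need a version of the elliptic stability whose right-hand side is the difference of the \emph{data densities} $f_i$ (not of the effective densities $g_i(t,\cdot)$), or a genuinely different mechanism to neutralise the $\dot\f$-difference; the route through Young plus a maximum-principle bootstrap, as written, does not deliver that.
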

	\begin{proof}
		We are going to use the stability result established in~\cite[Theorem 2.3]{guedj2021quasi}.   
		For any $t\in [\varepsilon,T']$, consider the densities \[f^i_{t}=e^{\dot{\f}^i_t+F_i(t,\cdot,\f_t^i)}f_i,\quad i=1,2. \] 
		It follows from Proposition~\ref{prop_1dot} that for any $t\in[\varepsilon,T']$, $\dot{\f}_t^i$ are uniformly bounded by $C_1/\varepsilon$, while Proposition~\ref{prop_0dot} ensures that the $\f_t^i$ are uniformly bounded. We thus deduce that the $L^p$-norms of  the densities $f^i_{t}$ are uniformly bounded from above by $$A_1:=e^{C_1/\varepsilon+M_F}\left(\|f_1\|_{L^p}+\|f_2\|_{L^p}\right).$$ 
		Similarly, the $L^{\frac{1}{n}}$-norms of the densities $f_t^i$ are uniformly bounded from below by \begin{equation*}
		    A_2:=e^{-C_1/\varepsilon-M_F}\left(\|f_1\|_{L^\frac{1}{n}}+\|f_2\|_{L^\frac{1}{n}}\right)
		\end{equation*}
		for $t\in[\varepsilon,T']$.		
		 Theorem~\ref{thm: stability} ensures that
		\begin{align*}
		\|\f^1_t-\f^2_t\|_{L^\infty(X)}\leq C\left(\|\f^1_t-\f^2_t\|^\gamma_{L^1(X)}+\|f^1_t -f^2_t\|_{L^p(X)}\right)^{\frac{1}{n}},\quad \forall t\in [\varepsilon,T'],
		\end{align*}  
		 where $\gamma\in (0,1)$ only depends on $p,n$, and $C$ only depends on $n$, $p$, upper bounds for $\|\f^i\|_{L^\infty(X_T)}$, $\|f_t^i\|_{L^\frac{1}{n}}$ and $\|f_t^i\|_{L^p}$. On the other hand, it follows from \cite[Lemma 1.5]{guedj2020pluripotential} that
		\begin{align*}
		\|\f^1_t-\f^2_t\|_{L^1(X)}\leq B\max\{\|\f^1-\f^2\|_{L^1(X_T)}, \|\f^1-\f^2\|_{L^1(X_T)}^{1/2} \},
		\end{align*}
		with $B=2\max\{\sqrt{\kappa},(T-T')^{-1} \}$, where $\kappa$ is a uniformly Lipschitz constant of $\f^1-\f^2$ in $[\varepsilon,T']$. The proof follows from the last two inequalities. 
	\end{proof}
	The previous results yields easily the following useful information:
	\begin{corollary}
	Assume that $\|f_j\|_{L^p}$, $\|F_j\|_{L^\infty},\|\partial_t F_j\|_{L^\infty}$ and $\|\partial_rF_j\|_{L^\infty}$ are uniformly bounded. If a sequence $(\f^j)$ of solutions to $\eqref{cmaf}_{f_j,F_j}$ converges in $L^1(X_T)$ to $\f$, then it converges uniformly on any compact subsets of $(0,T)\times X$.  
	\end{corollary}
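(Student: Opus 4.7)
The strategy is to invoke the stability estimate of Proposition~\ref{stab1} in a Cauchy fashion, applying it to pairs $(\f^j,\f^k)$. The uniform bounds on $(f_j,F_j)$ make the a priori estimates of Propositions~\ref{prop_0dot}, \ref{prop_1dot} and \ref{prop_2dot} hold with constants independent of $j$: in particular, $\|\f^j\|_{L^\infty([0,T']\times X)}\le C_0$, $\|\dot{\f}^j\|_{L^\infty([\varepsilon,T']\times X)}\le C_1/\varepsilon$ for every $0<\varepsilon<T'<T$, and the family $(\f^j)$ is uniformly semi-concave in time. Consequently the constant $B$ in Proposition~\ref{stab1} can be chosen independent of $j,k$.

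Fix $0<\varepsilon<T'<T$. Applying Proposition~\ref{stab1} to the pair $(\f^j,\f^k)$ yields
\[
\|\f^j-\f^k\|_{L^\infty([\varepsilon,T']\times X)}
\le B\bigl(\|\f^j-\f^k\|_{L^1(X_T)}^{\gamma}+\|f_j-f_k\|_{L^p(X)}\bigr)^{1/n}.
\]
The first term tends to $0$ as $j,k\to\infty$ by the $L^1(X_T)$-Cauchy property of $(\f^j)$. For the second term one extracts, from the uniform $L^p$-bound together with the almost-everywhere convergence of $\f^j$ to $\f$ and the equation satisfied by $\f^j$, a subsequence $(f_{j_k})$ that is Cauchy in $L^p$. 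Along this subsequence the right-hand side vanishes, so $(\f^{j_k})$ is Cauchy in $L^\infty([\varepsilon,T']\times X)$; its uniform limit must agree with the $L^1$-limit $\f$. A standard Urysohn-type subsequence argument then upgrades this to uniform convergence of the full sequence on every compact subset of $(0,T)\times X$.

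The main obstacle lies in the term $\|f_j-f_k\|_{L^p(X)}$: only uniform $L^p$-boundedness of $(f_j)$ is hypothesized, not $L^p$-convergence. Resolving this requires combining reflexivity (to extract a weakly convergent subsequence) with the a.e.\ convergence of $\f^j$ and the pluripotential equation to identify the limit and upgrade weak convergence to strong $L^p$-convergence along a subsequence, and then invoking uniqueness of the $L^1$-limit $\f$ together with Urysohn's principle to transfer the uniform-on-compacts conclusion to the full sequence.
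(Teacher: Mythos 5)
Your first two paragraphs correctly track the paper's intended route: invoke Proposition~\ref{stab1} in a Cauchy fashion, using the uniform a priori estimates of Propositions~\ref{prop_0dot} and~\ref{prop_1dot} to make the constant $B$ independent of $j,k$. And you correctly flag the obstruction: the term $\|f_j-f_k\|_{L^p(X)}$ is not controlled by the stated hypothesis that $\|f_j\|_{L^p}$ be merely uniformly bounded. This is in fact an omission in the corollary as written. In the only places its conclusion is actually used --- the Cauchy step inside the proof of Theorem~\ref{thm: existence} and the argument completing Theorem~\ref{thmC} --- the densities $f_j$ are explicitly assumed to converge to $f$ in $L^p(X)$, and that assumption should be read into the corollary. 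Once it is added, the corollary follows immediately exactly as in your first two paragraphs, and there is nothing left to prove.

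The repair you sketch in the last paragraph, however, does not hold up. Uniform $L^p$-boundedness gives only a weakly convergent subsequence $f_{j_k}\rightharpoonup f^*$ in $L^p$; there is no mechanism in the hypotheses to promote this to strong $L^p$-Cauchy convergence. The a.e.\ convergence of $\f^j$ together with the uniform semi-concavity (via Theorem~\ref{thm_211}) yields $\dot\f^j\to\dot\f$ a.e., but the $F_j$ are only assumed to be uniformly bounded with uniformly bounded derivatives, not to converge, so the factor $e^{\dot\f^j+F_j(t,x,\f^j)}$ need not converge pointwise and cannot be divided out. Even where the right-hand-side measures do converge, Bedford--Taylor convergence gives only weak-$*$ convergence of measures, which is far weaker than $L^p$-norm convergence of densities. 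Reflexivity plus a.e.\ convergence of the potentials simply does not force $(f_j)$ to be norm-Cauchy (a bounded, weakly but not strongly convergent oscillating family of densities is not ruled out by anything in the hypotheses). The correct fix is to add the hypothesis $f_j\to f$ in $L^p(X)$, not to try to derive it.
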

	\section{Pluripotential solutions}
	
	From now on we assume that the family $(\omega_t)$ and the density $F$ and $f$ satisfy the conditions given in the introduction.
	
	For bounded parabolic potentials $\f\in\mathcal{P}(X_T,\omega)\cap L^\infty(X_T)$, we can define the parabolic complex Monge-Amp\`ere equation in the sense of measures on $(0,T)\times X$:
	\begin{equation}\label{eq cmaf}\tag{CMAF}
	    (\omega_t+\dc\f_t)^n\wedge dt=e^{\dot{\f}_t+F(t,x,\f_t)}f(x)dV(x)\wedge dt.
	\end{equation}
	Indeed, it follows from Definition~\ref{def_lhs} that the left-hand side is a well-defined Radon positive measure, while Lemma~\ref{lem: rhs} ensures that so is the right-hand side.
	\subsection{Existence results}\label{sect: existence}
	Our goal in this section is to run the flow \eqref{eq cmaf} from a singular initial data $\f_0\in \PSH(X,\omega_0)$ having uniform bounds. The strategy is to approximate $\f_0$ by a  decreasing sequence of  smooth $\omega_0$-psh functions, $f$ by a smooth sequence functions $f_j$ in $L^p(dV)$, and $F$ by a smooth sequence functions $F_j$, we then consider unique solutions $\f_{t}^j$ to the flow \eqref{eq cmaf} with smooth data $\f_{0,j}$, $F_j$, $f_j$. In order to pass to the limit we shall use the a priori estimates established in the previous section.  
	
	Before proving Theorem~\ref{thmA} in the introduction, we give the following definition of pluripotential solutions to the Cauchy problem.
	\begin{definition}
	    A parabolic potential $\f\in\mathcal{P}(X_T,\omega)$ is a {\em pluripotential solution} to~\eqref{eq cmaf} with initial data $\f_0\in\PSH(X,\omega_0)\cap L^\infty(X)$ if $\f$ satisfies~\eqref{eq cmaf} in the sense of measures on $X_T$ and $\f_t\to \f_0$ in $L^1(X)$ as $t\to 0^+$. 
	\end{definition}
	\begin{theorem}\label{thm: existence}
		Let $\f_0$ be a bounded $\omega_0$-psh function on $X$. Assume that $(\omega,F,f)$ is as in the introduction. Then there exists $\f\in\mathcal{P}(X_T,\omega)$ solving \eqref{eq cmaf} with initial data $\f_0$ such that for all $0<T'<T$,
		\begin{itemize}
			\item $(t,x)\mapsto\f(t,x)$ is uniformly bounded in $(0,T']\times X$,
			
			\item $t\mapsto \f(t,\cdot)-n(t\log t-t)+Ct$ is increasing on $(0,T')$,
			
			\item $t\mapsto\f(t,\cdot)-Ct^2$ is concave on each compact subset of $(0,T')$ for some $C>0$,
			
			\item  $\f_t\to\f_0$ as $t\to 0^+$ in $L^1(X)$.
		\end{itemize}
	\end{theorem}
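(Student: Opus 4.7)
The plan is to construct $\f$ as a limit of smooth solutions to a sequence of regularized flows, using the a priori estimates from Section~\ref{sect: estimate} to pass to the limit.

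\emph{Step 1 (Regularization).} Approximate $\f_0$ from above by a decreasing sequence $(\f_{0,j})$ of smooth strictly $\omega_0$-psh functions, using Demailly's regularization on $X$. Choose smooth strictly positive densities $f_j$ converging to $f$ in $L^p(X)$ with $\|f_j\|_{L^p}$ uniformly bounded and $\inf_X f_j > 0$, and smooth $F_j$ converging to $F$ locally uniformly on $[0,T) \times X \times \mathbb{R}$ while preserving the structural constants $\lambda_F$, the local Lipschitz bound, and the semi-convexity constant $C_F$. Also keep the constants $A$ controlling $\omega_t$ fixed. For each $j$ the classical parabolic Hermitian theory (Tosatti--Weinkove \cite{tosatti2015evolution} and its variants) yields a unique smooth solution $\f^j \in \mathcal{C}^\infty(X_T)$ to \eqref{eq cmaf} with data $(\omega_t, F_j, f_j, \f_{0,j})$.

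\emph{Step 2 (Uniform a priori estimates).} Apply the estimates of Section~\ref{sect: estimate} to $\f^j$ with constants depending only on the regularized data, which remain uniform in $j$:
\begin{itemize}
\item Proposition~\ref{prop_0dot}: $\|\f^j\|_{L^\infty(X_T)} \leq C_0$;
\item Proposition~\ref{prop_barrier}: the lower barrier $\f^j_t(x) \geq (1-\alpha t)e^{-At}\f_{0,j}(x) + \alpha t\,\rho(x) + n(t\log \alpha t - t) - C(e^{\lambda_F t}-1)/\lambda_F$;
\item Proposition~\ref{prop_1dot}: $n\log t - C \leq \dot{\f}^j_t \leq C/t$;
\item Proposition~\ref{prop_2dot}: $\ddot{\f}^j_t \leq C/t^2$.
\end{itemize}
The Lipschitz and semi-concavity estimates show that the family $(\f^j)$ is uniformly semi-concave and locally uniformly Lipschitz in $(0,T)$.

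\emph{Step 3 (Passage to the limit).} By the compactness result for parabolic potentials (stated after Definition~\ref{def: pp}) and the uniform $L^\infty$ bound, we extract a subsequence converging in $L^1_{\rm loc}(X_T)$ to some $\f \in \mathcal{P}(X_T,\omega) \cap L^\infty(X_T)$. The stability Corollary following Proposition~\ref{stab1} upgrades this to uniform convergence on every compact set of $(0,T) \times X$; in particular $(\omega_t + \dc \f^j_t)^n \wedge dt \to (\omega_t + \dc \f_t)^n \wedge dt$ weakly on $X_T$. The uniform semi-concavity and Theorem~\ref{thm_211} yield
\[ e^{\dot{\f}^j + F_j(t,x,\f^j)} f_j \, dV \wedge dt \longrightarrow e^{\dot{\f} + F(t,x,\f)} f \, dV \wedge dt \]
weakly on $X_T$, so $\f$ solves \eqref{eq cmaf}. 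The monotonicity $\partial_t[\f^j - n(t\log t - t) + Ct] \geq 0$ follows from the Lipschitz lower bound and passes to the limit; similarly the uniform semi-concavity inequality passes to the limit, giving the second and third bullet points of the theorem.

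\emph{Step 4 (Initial data).} Fix $\varepsilon > 0$. From the monotonicity in $t$, $\limsup_{t \to 0^+} \f_t \leq \f_0$ pointwise. From the lower barrier (which survives the limit since $\f_{0,j} \downarrow \f_0$), for every $x \in \Omega = \{\rho > -\infty\}$ one has $\liminf_{t \to 0^+} \f_t(x) \geq \f_0(x)$; since $X \setminus \Omega$ is pluripolar, $\f_t \to \f_0$ a.e. and dominated convergence (using the $L^\infty$ bound) yields $\f_t \to \f_0$ in $L^1(X)$.

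The main obstacle is ensuring that the nonlinear right-hand side $e^{\dot{\f}^j + F_j(t,\cdot,\f^j)} f_j\, dV \wedge dt$ passes to the limit in the weak sense; this relies crucially on the uniform semi-concavity in time (hence a.e. convergence of $\dot{\f}^j$), which itself requires the second bound $\ddot{\omega}_t \leq A \omega_t$ on the family of forms and the semi-convexity of $F$. Controlling the convergence at $t=0$, using only the bounded initial data and the barrier with the auxiliary function $\rho$, is the other delicate point.
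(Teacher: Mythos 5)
There are two genuine gaps in your proposal, one in Step~1 and one in Step~4.

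\emph{Gap in Step~1: missing the perturbation of $\omega_t$.} The forms $\omega_t$ are only semi-positive (and merely big), not Hermitian, so the smooth parabolic theory of Tosatti--Weinkove does not directly apply to the flow with reference form $\omega_t$. Likewise, when $\omega_0$ is degenerate there is in general no decreasing sequence of smooth \emph{strictly} $\omega_0$-psh functions; Demailly's regularization gives smooth functions $\f_{0,j}$ with $\omega_0 + \dc\f_{0,j} \geq -\varepsilon_j\omega_X$ only. The paper therefore approximates the family of forms itself, setting $\omega_t^j := \omega_t + \varepsilon_j\omega_X$ (Hermitian, with uniform constants $A$), and solves the regularized flow with data $(\omega_t^j, F_j, f_j, \f_{0,j})$. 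This is not a cosmetic step: it is what makes the approximating problems genuinely non-degenerate and hence solvable by \cite{tosatti2015evolution}. Your Step~1, which keeps $\omega_t$ fixed and invokes Tosatti--Weinkove, breaks down here. Once you add this perturbation, the rest of Step~2 and Step~3 goes through as you describe, since the a priori estimates of Section~\ref{sect: estimate} are uniform in the $\varepsilon_j$-perturbation.

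\emph{Gap in Step~4: the upper bound at $t=0$.} The monotonicity you invoke, namely that $t\mapsto\f_t - n(t\log t - t) + Ct$ is increasing, gives a \emph{lower} bound as $t\to 0^+$, not an upper one: if $g(t)$ is increasing then $g(t)\geq g(0^+)$, so $\f_t(x) \geq \lim_{s\to 0^+}\f_s(x) + n(t\log t - t) - Ct$. This is the same direction as the barrier estimate; it does not show $\limsup_{t\to 0^+}\f_t \leq \f_0$. In fact no pointwise upper bound of this form appears in the paper. The correct argument is the integral estimate of Lemma~\ref{lem_aver}: Jensen's inequality applied to the equation gives $\int_X\dot\f_t\,d\mu \leq C$, hence $\int_X\f_t\,d\mu \leq \int_X\f_0\,d\mu + Ct$. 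Combined with the pointwise lower barrier (which converges to $\f_0$ in $L^1$ as $t\to 0$ because $\rho\in L^1$), and the uniform $L^\infty$ bound, the squeeze argument $\int_X\f_t\,d\mu \to \int_X\f_0\,d\mu$ together with $\f_t \geq (\text{barrier})_t \to \f_0$ in $L^1$ yields $\f_t\to\f_0$ in $L^1(X)$. You should replace the pointwise $\limsup$ claim with this averaging argument. (One \emph{could} salvage a pointwise upper bound by arranging $\f^j_t$ to decrease in $j$ via the comparison principle and then interchanging the limits in $t$ and $j$, but that requires additional care with the perturbations of $F$, $f$, $\omega_t$ and is not what you wrote: you invoked monotonicity in $t$, which points the other way.)
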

	\begin{proof}
		We first approximate
		\begin{itemize}
			\item $F$ by smooth densities $F_j$ with uniform constants $L_{F_j},C_{F_j},\lambda_{F_j}$;
			
			\item $f$ by smooth densities $f_j>0$ in $L^p(X)$;
			
			\item $\f_0$ by smoothly decreasing $\omega_0$-psh functions $\f_{0,j}$ thanks to \cite{demailly1992regularization, blocki2007regularization};
			
			\item $\omega_t$ by smooth (in $t$) $\omega_t^j=\omega_t+2^j\omega_X$ and $\omega_t^j$ satisfies the assumptions in the introduction.
		\end{itemize}
		It is well-known  (see e.g. \cite[Theorem 1.2]{tosatti2015evolution}) that for every $j$, there exists a unique smooth function $\f^j\in\mathcal{P}(X_T,\omega)$ to $\eqref{eq cmaf}_{F_j,f_j}$, i.e.
		\begin{align*}
		dt\wedge(\omega_t^j+\dc\f_t^j)^n=e^{\dot{\f}^j(t,x)+F_j(t,x,\f^j(t,x))}f_j dt\wedge dV(x).
		\end{align*}
		By the previous section, we see that the $\f^j$'s are uniformly bounded and the derivatives $\ddot{\f}^j$ are locally uniformly bounded from above in $X_T$. Extracting and relabelling, it follows from Theorem \ref{thm_211} that there exists  $\f\in \mathcal{P}(X_T,\omega)\cap L^\infty_{\rm loc}(X_T)$ such that $\f^j\to \f$ in $L^1_{\rm loc}(X_T)$ and 
		\begin{align*}
		e^{\dot{\f}_j+F_j(t,x,\f_j(t,x))}f_j(x)dV(x)\wedge dt\rightarrow e^{\dot{\f}+F(t,x,\f_j(t,x))}f(x)dV(x)\wedge dt
		\end{align*}
		in the weak sense of measures on $X_T$. 
		
		We claim that $\f^j\to \f$ locally uniformly in $X_T$. Indeed, fix $0<\varepsilon<T'<T$. Since the densities $f_j$ have uniform $L^p$ norms, it follows from Proposition~\ref{prop_1dot} that for each $j$, $\dot{\f}^j$ is uniformly bounded on $[\varepsilon,T']\times X$. By Proposition \ref{stab1}, we have
		\begin{align*}
		|\f^j(t,x)-\f^k(t,x)|\leq B\left(\|\f^j-\f^k\|_{L^1(X_T)}^\alpha+\|f_j-f_k\|_p \right)^{1/n},
		\end{align*}
		where $B>0$ and $\alpha\in (0,1)$ are uniform constants which do not depend on $j,k$, and $t\in [\varepsilon,T']$. The claim thus follows.
		
		Therefore
		\begin{align*}
		dt\wedge(\omega_t+\dc\f_t^j)^n\rightarrow dt\wedge(\omega_t+\dc\f_t)^n
		\end{align*} in the sense of measures on $X_T$, hence $\f$ solves~\eqref{eq cmaf}. 
		
		We now show similarly that $\f$ is locally uniformly semi-concave in $(0,T)$. Since the densities $f_j$ are uniformly bounded in $L^p(X)$, Proposition \ref{prop_2dot} ensures that there is a uniform constant $C>0$ such that for all $j$,
		\begin{align*}
		\ddot{\f}^j(t,x)\leq C/t^2,\quad\forall (t,x)\in X_T. 
		\end{align*}
		Hence, for each compact sub-interval $J\subset\subset(0,T)$ there exists a constant $C=C_J>0$ such that the functions $t\mapsto\f^j(t,x)-Ct^2$ are concave in $J$ for all $x\in X$. The same properties hold for the limiting function $\f(t,x)$ by letting $j\to+\infty$. 
		
		The last statement follows from Lemma \ref{lem_aver} below.
	\end{proof}
	\begin{lemma}\label{lem_aver} Let $\f\in\mathcal{P}(X_T)\cap L^\infty_{\rm loc}(X_T)$ be a solution to \eqref{eq cmaf}. Then
		there exists a uniform constant $C>0$ such that
		\begin{align*}
		\int_X\f_td\mu\leq \int_X\f_0d\mu+C.
		\end{align*}
	\end{lemma}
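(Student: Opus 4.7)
The plan is to lift the inequality to the smooth approximations $\f^j$ constructed in the proof of Theorem~\ref{thm: existence}, differentiate the integral in time, and exploit Jensen's inequality together with the Hermitian Chern--Levine--Nirenberg bounds on total Monge--Amp\`ere mass. Concretely, I let $\f^j$ denote the smooth solutions to the regularized flow with data $(\omega_t^j,F_j,f_j,\f_{0,j})$ as in Section~\ref{sect: existence}, converging to $\f$ in $L^1_{\loc}(X_T)$ (and locally uniformly in $(0,T)\times X$). By Proposition~\ref{prop_0dot} the $\f^j$ are uniformly bounded on $[0,T]\times X$, and I choose the approximating densities so that the ratio $f/f_j$ stays uniformly bounded (e.g.\ by mollifying $f+1/j$), which will be crucial below.

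Since $\f^j$ is smooth, I can differentiate directly:
\[
\frac{d}{dt}\int_X \f^j_t\,d\mu=\int_X\dot\f^j_t\,d\mu=\int_X\bigl(\dot\f^j_t+F_j(t,\cdot,\f^j_t)\bigr)\,d\mu-\int_X F_j(t,\cdot,\f^j_t)\,d\mu,
\]
and the last term is harmlessly bounded by the $L^\infty$ control on $F_j$ and on $\f^j$. For the first term I use $\dot\f^j_t+F_j=\log\bigl((\omega_t^j+\dc\f^j_t)^n/(f_j\,dV)\bigr)$ and apply Jensen's inequality (concavity of $\log$) against the probability measure $d\mu/\mu(X)$:
\[
\exp\!\Bigl(\tfrac{1}{\mu(X)}\int_X(\dot\f^j_t+F_j)\,d\mu\Bigr)\le\tfrac{1}{\mu(X)}\int_X e^{\dot\f^j_t+F_j}\,d\mu=\tfrac{1}{\mu(X)}\int_X\tfrac{d\mu/dV}{f_j}\,(\omega_t^j+\dc\f^j_t)^n.
\]
Provided $\mu$ has bounded density with respect to $f\,dV$ (the natural choice that matches the right-hand side of \eqref{eq cmaf}), the factor $(d\mu/dV)/f_j$ is uniformly bounded, so the right-hand side is controlled by $\int_X(\omega_t^j+\dc\f^j_t)^n$.

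The uniform bound on this Monge--Amp\`ere total mass is the heart of the matter. Combining the uniform $L^\infty$ estimate on $\f^j$ with the Hermitian Chern--Levine--Nirenberg proposition stated just after Proposition~\ref{lem: convergence}, I obtain $\int_X(\omega^j_t+\dc\f^j_t)^n\le C$ uniformly in $j,t$. Taking logarithms therefore yields $\int_X\dot\f^j_t\,d\mu\le C_1$ uniformly, and integrating over $[0,t]\subset[0,T]$ gives $\int_X\f^j_t\,d\mu\le\int_X\f^j_0\,d\mu+C_1T$. Letting $j\to\infty$, the $L^1_{\loc}$ convergence of $\f^j$ together with the uniform $L^\infty$ upper bound lets me pass to the limit on the left-hand side (dominated convergence), while $\f_{0,j}\searrow\f_0$ handles the right-hand side.

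The principal obstacle is precisely the Hermitian subtlety: unlike the K\"ahler case of~\cite{guedj2020pluripotential}, the total mass $\int_X(\omega_t^j+\dc\f^j_t)^n$ is not a topological invariant and must be controlled through the $L^\infty$ bound on $\f^j$ via the Hermitian CLN inequality. A secondary technical point is the coordination between the approximating densities $f_j$ and the measure $\mu$, which is why the approximation must be chosen so that $(d\mu/dV)/f_j$ remains uniformly bounded.
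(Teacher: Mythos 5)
Your overall strategy is the right one and matches the paper's in its essentials: Jensen's inequality applied to $\exp$ against the probability measure $d\mu/\mu(X)$, a uniform bound on the total Monge--Amp\`ere mass $\int_X(\omega_t+\dc\f_t)^n$ (the paper invokes \cite[Proposition 2.3]{dinew2012pluripotential}; your Hermitian CLN bound serves the same purpose), and integration of the resulting one-sided bound on $\int_X\dot\f_t\,d\mu$.

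The genuine gap is the claimed uniform control of the ratio $f/f_j$. Mollifying $f+1/j$ gives $f_j\ge 1/j>0$ and $f_j\to f$ in $L^p$, but it does \emph{not} give a pointwise bound $f\le C f_j$: at a point where $f$ is large, the local averaging can make $f_j$ much smaller than $f$, so $f/f_j$ is in general unbounded. Since your Jensen step reduces to bounding $\int_X (f/f_j)(\omega_t^j+\dc\f^j_t)^n$ and the MA measure is not dominated by a fixed reference measure, this step as written does not go through. Two clean fixes: (i) do as the paper does and argue directly with the pluripotential solution $\f$: because $\f$ is locally uniformly Lipschitz in $t$, the map $t\mapsto\int_X\f_t\,d\mu$ is locally Lipschitz with a.e.\ derivative $\int_X\dot\f_t\,d\mu$, and the slice identity $(\omega_t+\dc\f_t)^n=e^{\dot\f_t+F(t,\cdot,\f_t)}f\,dV$ (with $\mu=f\,dV$) gives $\int_X e^{\dot\f_t+m}\,d\mu\le\int_X(\omega_t+\dc\f_t)^n\le M$ with $m:=\inf F$, after which Jensen yields $\int_X\dot\f_t\,d\mu\le C$ and one integrates; no approximation is needed and the ratio issue disappears. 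Or (ii) if you insist on the smooth route, run the Jensen argument with $\mu_j:=f_j\,dV$ rather than with the fixed $\mu$, obtaining $\int_X\f^j_t f_j\,dV\le\int_X\f^j_0 f_j\,dV+CT$ uniformly, and only at the very end pass to the limit using $f_j\to f$ in $L^p$, $\f_{0,j}\searrow\f_0$, and the uniform $L^\infty$ bound on $\f^j$. Either route makes the argument sound; as written, the proposal relies on a property of the approximating densities that is not available.
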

	\begin{proof}
		Set $m:=\inf_{X_T\times [-C_0,C_0]}F(t,x,r)$. We first observe that
		\begin{align*}
		\int_Xe^{\dot{\f}_t+m}d\mu\leq \int_X(\omega_t+\dc\f_t)^n\leq M<+\infty,
		\end{align*} where the last inequality follows from~\cite[Proposition 2.3]{dinew2012pluripotential}. Note that we have shown that $|\f(t,x)|\leq C_0$ in Proposition~\ref{prop_0dot}.
		It follows from Jensen's inequality that
		\begin{align*}
		\int_X e^{\dot{\f}_t}\frac{d\mu}{\mu(X)}\geq \exp\left(\int_X\dot{\f}_t\frac{d\mu}{\mu(X)} \right).
		\end{align*}
		Combining these two estimates we obtain
		\begin{align*}
		\int_X\dot{\f}_td\mu\leq C:=m\mu(X)+\mu(X)\log M-\mu(X)\log\mu(X).
		\end{align*}
		We then infer that the function $t\mapsto \int_X\f_td\mu-Ct$ is decreasing in $t\in(0,T)$, hence
		\begin{align*}
		\int_X\f_td\mu\leq \int_X\f_0d\mu+C,
		\end{align*} as required.
	\end{proof}
\subsection{Pluripotential subsolutions/supersolutions}
Before establishing a pluripotential parabolic comparison principle, we introduce in this section some essential materials. \subsubsection{Definitions}	 
\begin{definition}
	A parabolic potential $\f\in\mathcal{P}(X_T)\cap L^\infty(X_T)$ is called a \emph{pluripotential subsolution} of~\eqref{eq cmaf} if 
	\begin{align*}
	dt\wedge(\omega_t+\dc\f_t)^n\geq e^{\dot{\f}_t+F(t,\cdot,\f_t)}fdV\wedge dt
	\end{align*}
	holds in the sense of measures in $(0,T)\times X$.
	
	A parabolic potential $\f\in\mathcal{P}(X_T)\cap L^\infty(X_T)$ is called a \emph{pluripotential supersolution} of~\eqref{eq cmaf} if 
	\begin{align*}
	dt\wedge(\omega_t+\dc\f_t)^n\leq e^{\dot{\f}_t+F(t,\cdot,\f_t)}fdV\wedge dt
	\end{align*}
	holds in the sense of measures in $(0,T)\times X$.
\end{definition}
In the sequel one can interpret these notions by considering a family of inequalities on slices:
\begin{lemma}\label{lem_subsol}
	Let $\f\in\mathcal{P}(X_T)\cap L^\infty(X_T)$. Then
	
	1) $\f$ is a pluripotential subsolution of \eqref{eq cmaf} if and only if for all $t\in (0,T)$,
	\begin{align*}
	(\omega_t+\dc\f_t)^n\geq e^{\dot{\f}_t+F(t,\cdot,\f_t)}fdV
	\end{align*}
	in the sense of measures in $X$.
	
	2) $\f$ is a pluripotential supersolution of \eqref{eq cmaf} if and only if for all $t\in (0,T)$,
	\begin{align*}
	(\omega_t+\dc\f_t)^n\leq e^{\dot{\f}_t+F(t,\cdot,\f_t)}fdV
	\end{align*}
	in the sense of measures in $X$.
\end{lemma}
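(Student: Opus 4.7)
Plan: The two parts of the lemma are completely symmetric (reverse the inequality throughout), so it suffices to prove (1). The content is the equivalence between an inequality of positive Borel measures on $X_T=(0,T)\times X$ and a family of inequalities of positive Borel measures on $X$ indexed by $t$.

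For the easy direction $(\Leftarrow)$, I would fix a non-negative test function $\gamma\in\mathcal{C}^0_c(X_T)$ and invoke Definition~\ref{def_lhs} to write
\[
\int_{X_T}\gamma\, dt\wedge(\omega_t+\dc\f_t)^n \;=\; \int_0^T\!\!\Bigl(\int_X\gamma(t,\cdot)(\omega_t+\dc\f_t)^n\Bigr)\,dt.
\]
Applying the slice inequality pointwise in $t$ inside the inner integrand, and then Fubini to the right-hand side (valid since $\dot\f\in L^\infty_{\rm loc}(X_T)$ by Lemma~\ref{lem: rhs}, so the density $e^{\dot\f_t+F(t,\cdot,\f_t)}f$ is locally integrable on $X_T$), immediately delivers the global $X_T$-inequality.

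For the converse $(\Rightarrow)$, I would test the $X_T$-inequality against product test functions $\gamma(t,x)=\tau(t)\chi(x)$ with $\tau\in\mathcal{C}^0_c((0,T))$ and $\chi\in\mathcal{C}^0(X)$ both non-negative. Setting $\Gamma_\chi(t):=\int_X\chi(\omega_t+\dc\f_t)^n$ and $H_\chi(t):=\int_X\chi\,e^{\dot\f_t+F(t,\cdot,\f_t)}f\,dV$, this produces $\int_0^T\tau(t)\Gamma_\chi(t)\,dt \geq \int_0^T\tau(t)H_\chi(t)\,dt$ for every admissible $\tau\geq 0$, hence $\Gamma_\chi(t)\geq H_\chi(t)$ for almost every $t$. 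Running $\chi$ over a countable dense family of non-negative continuous functions on $X$ (such a family exists by separability of $\mathcal{C}^0(X)$) pins down a single full-measure set of times on which the desired slice measure inequality holds in full.

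The main obstacle is upgrading the conclusion from "a.e. $t$" to the statement "for all $t\in(0,T)$" as written. For the left-hand side this is unproblematic: Lemma~\ref{lem: cln} ensures that $t\mapsto\Gamma_\chi(t)$ is continuous on $(0,T)$. For the right-hand side there is no comparable regularity, and indeed by Lemma~\ref{lem: rhs} the function $\dot\f_t$ is only defined for $t$ outside an $\ell$-negligible set, so the slice inequality is intrinsically meaningful only on the full-measure subset of $(0,T)$ at which $\dot\f_t(\cdot)$ exists $dV$-a.e. on $X$. Thus the natural reading of "for all $t\in(0,T)$" is "for every such admissible $t$", and on that set the argument above yields the inequality. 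This interpretation is exactly the one required by the subsequent applications of the lemma in the paper.
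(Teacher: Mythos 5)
Your proof is essentially the argument behind the cited result (the paper's ``proof'' is just a pointer to \cite[Lemma 3.11]{guedj2020pluripotential}), and both the easy direction (Fubini on the disintegrated measure from Definition~\ref{def_lhs}) and the hard direction (product test functions, a countable dense family of non-negative $\chi$, a single full-measure set of times) are correct.

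The one point worth flagging is that the issue you raise at the end is precisely what \cite{guedj2020pluripotential} address, and they resolve it in a slightly sharper way than ``reinterpret the quantifier.'' Their Lemma~3.11 actually distinguishes two slice-wise statements: the one you derive, which holds for a.e.\ $t$ with the pointwise derivative $\dot\f_t$; and a genuine \emph{for all} $t$ statement in which $\dot\f_t$ is replaced by the upper right Dini derivative $\partial^+_t\f_t$ (which exists everywhere by the local Lipschitz bound and agrees with $\dot\f$ on the full-measure set). Passing from your a.e.\ conclusion to the everywhere Dini-derivative version uses Lemma~\ref{lem: cln} for weak continuity of $t\mapsto(\omega_t+\dc\f_t)^n$ together with Fatou-type semicontinuity of $t\mapsto\int_X\chi\,e^{\partial^+_t\f_t+F}f\,dV$ along $t_k\downarrow t_0$ in the full-measure set. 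Your proof does not carry out this last step, so strictly speaking it establishes the a.e.\ version, not the ``for all $t$'' version as literally stated. That said, your argument is sound, the gap is a genuine imprecision in the paper's own formulation, and your reading suffices for the downstream applications (Proposition~\ref{prop: continuous}, Lemma~\ref{lem_ss}, and the comparison principle), so this is a quibble about the phrasing of the lemma rather than a defect in your reasoning.
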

\begin{proof}
	\cite[Lemma 3.11]{guedj2020pluripotential}
\end{proof}

We use properties of solutions to complex Monge-Amp\`ere equations to show that parabolic supersolutions automatically have continuity properties: 
\begin{proposition}\label{prop: continuous}
    Let $\psi\in\mathcal{P}(X_T,\omega)\cap L^\infty(X_T)$ be a pluripotential supersolution to \eqref{eq cmaf}. Then $\psi$ is continuous on $(0,T)\times \Omega$.
\end{proposition}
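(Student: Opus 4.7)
The strategy is to reduce joint continuity on $(0,T)\times\Omega$ to spatial continuity of each time slice by exploiting the Lipschitz-in-$t$ regularity built into $\mathcal{P}(X_T,\omega)$, and then to obtain the slice-wise continuity by comparing $\psi_{t_0}$ with a continuous solution of an auxiliary elliptic Hermitian Monge--Amp\`ere equation.

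First I would invoke the defining Lipschitz property of parabolic potentials: for any compact sub-interval $J\Subset(0,T)$ there exists $\kappa_J>0$ such that $|\psi(t,x)-\psi(s,x)|\leq\kappa_J|t-s|$ uniformly in $x\in X$ for all $s,t\in J$. Consequently, joint continuity of $\psi$ on $(0,T)\times\Omega$ is equivalent to continuity of each slice $\psi_{t_0}$ on $\Omega$; since $\psi_{t_0}\in\PSH(X,\omega_{t_0})\cap L^\infty(X)$ is automatically upper semi-continuous, only lower semi-continuity needs to be established.

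Next, the Lipschitz-in-$t$ bound yields $\|\dot\psi_t\|_{L^\infty(X)}\leq\kappa_J$ for $t\in J$. Combined with the boundedness of $\psi$, the local Lipschitz property of $F$, and $f\in L^p(X)$, this gives $g_{t_0}:=e^{\dot\psi_{t_0}+F(t_0,\cdot,\psi_{t_0})}f\in L^p(X)$. By Lemma~\ref{lem_subsol}, $\psi_{t_0}$ satisfies the elliptic inequality
\[
(\omega_{t_0}+\dc\psi_{t_0})^n\leq g_{t_0}\,dV
\]
in the sense of measures on $X$. I would then solve the Hermitian Monge--Amp\`ere equation $(\omega_{t_0}+\dc u_{t_0})^n=c\,g_{t_0}\,dV$ for a normalizing constant $c>0$, using the existence/continuity theory for Monge--Amp\`ere on compact Hermitian manifolds with $L^p$ densities (see e.g.~\cite{kolodziej2015weak}), thus producing a continuous $\omega_{t_0}$-psh function $u_{t_0}$. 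To compare $u_{t_0}$ and $\psi_{t_0}$ I would pass to a $\theta$-psh setting by adding an appropriate multiple of the big-potential $\rho$, and apply Proposition~\ref{min-prin} on relatively compact open sets $D\Subset\Omega$ to conclude that $\min_{\bar D}(\psi_{t_0}-u_{t_0})$ is attained on $\partial D$. Exhausting $\Omega$ by such $D$ forces $\psi_{t_0}$ to coincide with $u_{t_0}$ up to a continuous additive term on $\Omega$, which yields continuity.

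The main obstacle is the application of Proposition~\ref{min-prin}: it requires a strict mass inequality $(\theta+\dc v)^n\leq c(\theta+\dc u)^n$ with $c\in[0,1)$ for $\theta$-psh functions, whereas our natural comparison is for $\omega_{t_0}$-psh functions with a Hermitian (non-cohomological) mass-matching constant that need not be strictly less than $1$. I expect to resolve this by a barrier perturbation: replace $u_{t_0}$ by $u_{t_0}+\varepsilon\rho$ to create a strict excess of MA mass on $D$, apply Proposition~\ref{min-prin}, and let $\varepsilon\to 0$ to recover continuity at each interior point of $\Omega$.
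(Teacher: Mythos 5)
Your first two reductions match the paper's strategy: the uniform Lipschitz bound in $t$ on compact sub-intervals $J\Subset(0,T)$ gives $\|\dot\psi_t\|_{L^\infty}\leq\kappa_J$, which combined with the boundedness of $\psi$, $F$ and $f\in L^p$ yields $(\omega_t+\dc\psi_t)^n\leq e^M f\,dV$ with $e^M f\in L^p$ for each $t\in J$, and joint continuity then follows from slice-wise spatial continuity plus Lipschitz regularity in time. The divergence — and the gap — is in how you propose to obtain spatial continuity of $\psi_{t_0}$.

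Your use of Proposition~\ref{min-prin} is circular: that proposition requires the function playing the role of $v$ (the one with the smaller Monge--Amp\`ere mass) to lie in $\mathcal{C}^0(\Omega)$. In your comparison, the small-mass function is $\psi_{t_0}$ — the very function whose continuity you are trying to prove — so the hypothesis cannot be verified without already knowing the conclusion. Swapping roles does not help: $\psi_{t_0}$ has no lower bound of the form $\theta+\dc\psi_{t_0}\geq\varepsilon_0\omega_X$, and the mass inequality then points the wrong way. Moreover, even granting the conclusion of the minimum principle, the statement that $\min_{\bar D}(\psi_{t_0}-u_{t_0})$ is attained on $\partial D$ for every $D\Subset\Omega$ does not force $\psi_{t_0}-u_{t_0}$ to be continuous; an upper semi-continuous function with no strict interior minima can still have upward jumps, so ``coincide with $u_{t_0}$ up to a continuous additive term'' is not a consequence. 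Continuity of supersolutions with $L^p$ density genuinely requires a capacity-type argument; the paper obtains it by directly invoking \cite[Theorem~3.7]{guedj2021quasi} (a Kolodziej-type regularity theorem for bounded $\theta$-psh functions whose Monge--Amp\`ere measure is dominated by an $L^p$ density, valid on the positive locus of a big form), rather than trying to deduce it from the comparison principle.
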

Recall here that $\Omega$ is defined by $\Omega:=\{\rho>-\infty\}$, where $\rho$ is an $\theta$-psh function with analytic singularities such that $\theta+\dc\rho$ dominates a Hermitian form. 
\begin{proof}
    Fix $J\Subset (0,T)$. By definition, for every $t\in (0,T)$ we have
    \begin{equation*}
        (\omega_t+\dc\psi_t)^n\leq e^{\dot{\psi}_t+F(t,\cdot,\psi_t)}fdV
    \end{equation*} in the sense of measures in $X$. 
    
    Since $\psi$ is locally uniformly Lipschitz in $t$ and $F$ is bounded from above, there exists a constant $M=M(J)>0$ for almost every $t\in J$. We thus obtain
    \begin{equation*}
       (\omega_t+\dc\psi_t)^n\leq e^M fdV 
    \end{equation*} for almost every $t\in J$. This inequality actually holds for all $t\in J$ by the (weak) continuity of the left-hand side. It follows from~\cite[Theorem 3.7]{guedj2021quasi} that $\psi_t$ is continuous on $\Omega$ for each $t\in J$. Let us emphasize that in the proof of~\cite[Theorem 3.7, p. 25]{guedj2021quasi} we only  require $\omega_t$ to be merely big since the supersolution $\psi_t$ is bounded on $X$, for each $t>0$.
    Since $\psi$ is uniformly Lipschitz in $J$ it follows that $\psi$ is continuous on $J\times \Omega$ as follows from standard arguments; see e.g.~\cite[Proposition~3.12]{guedj2020pluripotential}.
\end{proof}
\subsubsection{Regularization of subsolutions.} In this part, we regularize subsolutions of~\eqref{eq cmaf} in time. This is analogous to that of~\cite[Sect. 3.4.3]{guedj2020pluripotential}.
We always assume that for all $t\in [0,T]$,
\begin{align}\label{hyp_1}
-A\omega_t\leq \dot{\omega}_t\leq A\omega_t,
\end{align}
for some constant $A>0$. We briefly recall a regularization process for subsolutions. Fix $0<T'<T$ and $\varepsilon_0>0$ such that $(1+\varepsilon_0)T'\leq T$. It follows from \eqref{hyp_1} that there exists $A_1>0$ such that for all $t\in (0,T')$ and $|s-1|<\varepsilon_0$,
\begin{align*}
\omega_t\geq (1-A_1|s-1|)\omega_{ts}.
\end{align*}
For $|s-1|<\varepsilon_0$ we set
\begin{align*}
\lambda_s:=\frac{|s-1|}{s},\quad\alpha_s=s(1-\lambda_s)(1-A_1|s-1|)\in (0,1).
\end{align*}
Up to shrinking $\varepsilon_0$, we can also assume that 
\begin{align*}
\lambda_s:=\frac{\lambda_s}{1-\alpha_s}\geq \varepsilon_1>0.
\end{align*}
It follows from \cite[Theorem 3.4]{guedj2021quasi} that there exist an $\varepsilon_1\theta$-psh function $\rho_1$ and a constant $c_1\in\mathbb{R}$ satisfying
\begin{align*}
(\varepsilon_1\theta+\dc\rho_1)^n=e^{c_1}fdV.
\end{align*}
This solution can be normalized by $\sup_X\rho_1=0$.
\begin{lemma}\label{lem_ss}
	Assume that $\f\in\mathcal{P}(X_T)$ is a bounded pluripotential subsolution to \eqref{cmaf}. Then there exists a uniform constant $C>0$ such that for every $s\in(1-\varepsilon_0,1+\varepsilon_0)$,
	\begin{align*}
	(t,x)\mapsto u^s(t,x):=\frac{\alpha_s}{s}\f(ts,x)+(1-\alpha_s)\rho_1(x)-C|s-1|t
	\end{align*} is a pluripotential subsolution of~\eqref{eq cmaf} in $X_{T'}$.
\end{lemma}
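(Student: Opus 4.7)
The plan is to apply the mixed Monge--Amp\`ere inequality (Lemma~\ref{lem: mixed_ineq}) after decomposing $\omega_t + \dc u^s_t$, which equals $\omega_t + \dc v^s_t$ for $v^s_t := u^s_t + C|s-1|t = \frac{\alpha_s}{s}\f(ts,\cdot)+(1-\alpha_s)\rho_1$, as a positive linear combination of $\omega_{ts}+\dc\f_{ts}$ and $\varepsilon_1\theta+\dc\rho_1$. This transfers the subsolution inequality for $\f$ at time $ts$ and the Monge--Amp\`ere equation satisfied by $\rho_1$ into a subsolution inequality for $u^s$ at time $t$.

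Starting from the hypothesis $\omega_t\geq(1-A_1|s-1|)\omega_{ts}$ and the identity $\alpha_s/s = (1-\lambda_s)(1-A_1|s-1|)$, I would split
\[
\omega_t + \dc v^s_t \;\geq\; \tfrac{\alpha_s}{s}(\omega_{ts}+\dc\f_{ts}) + \lambda_s(1-A_1|s-1|)\omega_{ts} + (1-\alpha_s)\dc\rho_1.
\]
Using $\omega_{ts}\geq \theta$ and the condition $\lambda_s/(1-\alpha_s)\geq \varepsilon_1$ (shrinking $\varepsilon_0$ if needed to absorb the factor $1-A_1|s-1|$), the right-hand side dominates $c\bigl(\delta(\omega_{ts}+\dc\f_{ts}) + (1-\delta)(\varepsilon_1\theta+\dc\rho_1)\bigr)$, with $c:=\alpha_s/s+(1-\alpha_s)$ and $\delta:=(\alpha_s/s)/c\in(0,1)$. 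Since $\f$ is a subsolution and $(\varepsilon_1\theta+\dc\rho_1)^n = e^{c_1}f\,dV$, Lemma~\ref{lem: mixed_ineq} then yields
\[
(\omega_t+\dc u^s_t)^n \;\geq\; c^n\,\exp\bigl(\delta[\dot\f_{ts}+F(ts,\cdot,\f_{ts})] + (1-\delta)c_1\bigr)\,f\,dV.
\]

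Since $\dot u^s_t = \alpha_s\dot\f_{ts} - C|s-1|$, the subsolution inequality for $u^s$ reduces to verifying
\[
n\log c + (\delta-\alpha_s)\dot\f_{ts} + \delta F(ts,\cdot,\f_{ts}) + (1-\delta)c_1 - F(t,\cdot,u^s_t) + C|s-1| \geq 0.
\]
A direct computation yields the identity $\delta-\alpha_s = \alpha_s(1-\alpha_s)(1-s)/(sc)$, which exhibits a second-order cancellation $\delta-\alpha_s = O(|s-1|^2)$; together with $n\log c = O(|s-1|)$, $1-\delta = (1-\alpha_s)/c = O(|s-1|)$, the Lipschitz control \eqref{assump: Lips} on $F$, the uniform $L^\infty$-bound on $\f$ from Proposition~\ref{prop_0dot}, and the local Lipschitz control of $\f$ in time inherited from $\f\in\mathcal{P}(X_T,\omega)$, each summand in the left-hand side (excluding $C|s-1|$) is of order $O(|s-1|)$ on compact subintervals of $(0,T')$. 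Choosing $C>0$ sufficiently large therefore makes the whole expression nonnegative.

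The main technical obstacle is the delicate parameter bookkeeping: the passage from the raw lower bound on $\omega_t + \dc v^s_t$ to the clean form $c(\delta\,S_1 + (1-\delta)\,S_2)$ requires precisely the choices of $\alpha_s$, $\lambda_s$, and $\varepsilon_1$ spelled out in the preamble to the lemma, and further the uniform control of $(\delta-\alpha_s)\dot\f_{ts}$ near the temporal boundary $t=0$ is subtle since $\dot\f$ may degenerate there. The second-order cancellation $\delta-\alpha_s = O(|s-1|^2)$ is precisely what allows the local Lipschitz control of $\f$ in time to absorb the $\dot\f_{ts}$ contribution into the constant $C$ uniformly on each compact subinterval of $(0,T')$, and the measure-theoretic nature of the pluripotential subsolution inequality on the open set $X_{T'}$ then extends the slice inequalities into the global conclusion.
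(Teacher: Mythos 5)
Your overall strategy (decompose $\omega_t+\dc u^s_t$ into a combination of $\omega_{ts}+\dc\f_{ts}$ and $\varepsilon_1\theta+\dc\rho_1$ and invoke Lemma~\ref{lem: mixed_ineq}) is the intended one, and your algebra, including the identity $\delta-\alpha_s=\alpha_s(1-\alpha_s)(1-s)/(sc)$, is correct. But there is a genuine gap at the crucial step: after normalizing the weights to $(\delta,1-\delta)$ with $\delta=\frac{\alpha_s/s}{\alpha_s/s+1-\alpha_s}$, you are left with the mismatch term $(\delta-\alpha_s)\dot{\f}_{ts}$, and you propose to absorb it into $C|s-1|$ using the $O(|s-1|^2)$ cancellation together with the Lipschitz control of $\f$ in time. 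A parabolic potential is only \emph{locally} uniformly Lipschitz in $(0,T)$; the Lipschitz constant is allowed to blow up as $t\to 0^+$ (this is built into Definition~\ref{def: pp}, and in the applications the subsolutions satisfy only $|\dot{\f}_t|\leq C/t$). Hence $(\delta-\alpha_s)\dot{\f}_{ts}$ is bounded by $O(|s-1|^2)\kappa_J$ only on compact subintervals $J\Subset(0,T')$, with $\kappa_J\to\infty$ near $0$, and even under the bound $|\dot\f_t|\le C/t$ one gets $O(|s-1|^2)/(ts)$, which is not $\leq C|s-1|$ uniformly as $t\to0$. Your own phrasing ("uniformly on each compact subinterval of $(0,T')$") concedes this: the constant you produce depends on the subinterval, whereas the lemma asserts a single uniform $C$ making $u^s$ a subsolution on all of $X_{T'}$. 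This uniformity near $t=0$ is exactly what is needed later, since Proposition~\ref{prop: reg} uses $u^s$ to control $\Phi^\varepsilon(0,\cdot)-\f_0$ at the initial time.

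The way to close the gap is to choose the convex weights in Lemma~\ref{lem: mixed_ineq} so that the coefficient of $\dot{\f}_{ts}$ in the exponent is \emph{exactly} $\alpha_s$, so that no time-derivative term survives at all. For $s\leq 1$ one has $\frac{\alpha_s}{s}\geq\alpha_s$, hence $\omega_t+\dc v^s_t\geq \alpha_s(\omega_{ts}+\dc\f_{ts})+(1-\alpha_s)(\varepsilon_1\theta+\dc\rho_1)$ (using the same positivity of the leftover of $\omega_t$ guaranteed by the choice of $\varepsilon_1$), and Lemma~\ref{lem: mixed_ineq} with weights $(\alpha_s,1-\alpha_s)$ gives the exponent $\alpha_s\bigl(\dot{\f}_{ts}+F(ts,\cdot,\f_{ts})\bigr)+(1-\alpha_s)c_1$. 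For $s\geq 1$ one instead keeps the weights $(\alpha_s/s,(1-\alpha_s)/s)$, whose sum is $1/s$, and the normalization produces the same exponent plus the harmless term $-n\log s=O(|s-1|)$. In either case the required inequality reduces to $\alpha_s F(ts,\cdot,\f_{ts})-F(t,\cdot,u^s_t)+(1-\alpha_s)c_1-n\log s+C|s-1|\geq 0$, where every term except $C|s-1|$ is $O(|s-1|)$ with constants depending only on $n$, $T'$, $\sup|\f|$, $\sup|\rho_1|$, $|c_1|$ and the Lipschitz constant of $F$ on the relevant compact set; choosing $C$ large then works uniformly on $(0,T')\times X$, with no control on $\dot{\f}$ needed.
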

\begin{proof}
	The proof is similar to that of \cite[Lemma 3.14]{guedj2020pluripotential}. Notice that we apply the mixed type inequality in the Hermitian setting; see Lemma~\ref{lem: mixed_ineq}.
\end{proof}

Let $\chi:\mathbb{R}\to[0,+\infty)$ be a smooth function with compact support in $[-1,1]$ such that $\int_\mathbb{R}\chi(s)ds=1$. For any $\varepsilon>0$, we set $\chi_\varepsilon(s):=\varepsilon^{-1}\chi(s/\varepsilon)$.
\begin{proposition}\label{prop: reg}
	Assume that $\f\in\mathcal{P}(X_T)$ is a bounded  pluripotential subsolution to \eqref{eq cmaf}. Let $u^s$ be defined as in Lemma \ref{lem_ss}. Then there exists a constant $B>0$ such that for all $\varepsilon>0$ small enough, the function
	\begin{align*}
	\Phi^\varepsilon(t,x):=\int_\mathbb{R}u^s(t,x)\chi_\varepsilon(s-1)ds-B\varepsilon(t+1)
	\end{align*} is a pluripotential subsolution of \eqref{eq cmaf} which is $\mathcal{C}^1$ in $t$ and such that
	\begin{align*}
	\sup_X[\Phi^\varepsilon(0,\cdot)-\f_0]\xrightarrow{\varepsilon\to 0}0.
	\end{align*}
\end{proposition}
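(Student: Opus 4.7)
The plan is to adapt the K\"ahler argument of Guedj--Lu--Zeriahi (the analogue of \cite[Proposition 3.15]{guedj2020pluripotential}) to the Hermitian setting, choosing the constant $B$ large enough so that the linear penalization $-B\varepsilon(t+1)$ absorbs all the error terms produced by the mollification in the dilation parameter $s$. Three things need to be verified: (i) $\Phi^\varepsilon \in \mathcal{P}(X_{T'},\omega)$, (ii) $\Phi^\varepsilon$ is $\mathcal{C}^1$ in $t$, and (iii) $\Phi^\varepsilon$ is a pluripotential subsolution to \eqref{eq cmaf} on $X_{T'}$. The initial-value statement will be a direct computation from the definition of $u^s$.

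For (i) and (ii), for each fixed $s$ with $|s-1|<\varepsilon$ the slice $x\mapsto u^s(t,x)$ is $\omega_t$-psh by Lemma \ref{lem_ss}, so the integral $w(t,x):=\int u^s(t,x)\chi_\varepsilon(s-1)\,ds$ is $\omega_t$-psh as a convex combination of $\omega_t$-psh functions (weighted by the probability measure $\chi_\varepsilon(s-1)\,ds$), and subtracting the smooth $B\varepsilon(t+1)$ does not alter this. For the time regularity, recall $u^s(t,x)=\frac{\alpha_s}{s}\f(ts,x)+(1-\alpha_s)\rho_1(x)-C|s-1|t$; applying the change of variable $\tau=ts$ (valid since $t>0$) turns the mollifier $\chi_\varepsilon(s-1)$ into $\chi_\varepsilon(\tau/t-1)/t$, a function which is smooth in $t$ with compact support in $\tau$. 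Since $\f$ is locally bounded and the mollifier is smooth in $t>0$, differentiation under the integral gives $\Phi^\varepsilon \in \mathcal{C}^\infty_t$.

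For the subsolution property (iii), we apply the Hermitian mixed inequality (Lemma \ref{lem: mixed_ineq}) to Riemann-sum approximations of $w$ and pass to the limit using Proposition \ref{lem: convergence}, obtaining
\begin{equation*}
(\omega_t+\dc w_t)^n \;\geq\; \exp\!\Bigl(\!\int \bigl[\dot{u}^s_t+F(t,\cdot,u^s_t)\bigr]\chi_\varepsilon(s-1)\,ds\Bigr)\,f\,dV .
\end{equation*}
Since $\omega_t+\dc\Phi^\varepsilon_t=\omega_t+\dc w_t$ and $\dot{w}_t=\dot{\Phi}^\varepsilon_t+B\varepsilon$, it suffices to check
\begin{equation*}
\int F(t,\cdot,u^s_t)\,\chi_\varepsilon(s-1)\,ds \;\geq\; F(t,\cdot,\Phi^\varepsilon_t)-C\varepsilon
\end{equation*}
for some constant $C$ independent of $\varepsilon$ on $[\delta,T']$ for any fixed $\delta>0$. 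This follows from the uniform Lipschitz property~\eqref{assump: Lips} of $F$ in $r$ together with the estimate $|u^s_t-w_t|=O(\varepsilon)$, which holds by the local uniform Lipschitz control of $\f$ in $t$ on $[\delta,T']$ (used through $\f(ts,x)$). Choosing $B\geq C$ yields the subsolution inequality on $(\delta,T']\times X$, and $\delta>0$ being arbitrary the subsolution property extends to all of $X_{T'}$.

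Finally, for the initial value, at $t=0$ the linear term vanishes, so $u^s(0,x)=\frac{\alpha_s}{s}\f_0(x)+(1-\alpha_s)\rho_1(x)$. Because $\alpha_s\to 1$ and $\alpha_s/s\to 1$ as $s\to 1$, and both $\f_0$ and $\rho_1$ are bounded, one has $|u^s(0,x)-\f_0(x)|\leq C_0|s-1|$ uniformly in $x$, hence $\bigl|\int u^s(0,\cdot)\chi_\varepsilon(s-1)\,ds-\f_0\bigr|\leq C_0\varepsilon$, and therefore $\sup_X[\Phi^\varepsilon(0,\cdot)-\f_0]\leq (C_0-B)\varepsilon\to 0$. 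The main obstacle is really the bookkeeping in step (iii): the Hermitian mixed inequality must be combined with Jensen-type manipulations and with the precise Lipschitz/convexity assumptions on $F$, and the constant $B$ must be chosen uniformly to dominate the aggregated error from the $F$-term, the time derivative, and the potential term simultaneously on the time slab $(0,T']$.
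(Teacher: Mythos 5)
Your outline follows the same route the paper intends -- the paper's own proof is just the one-liner ``use Lemma~\ref{lem_ss} and proceed as in [GLZ20, Prop.~3.16]'' -- so the skeleton is right: mollify $u^s$ in the dilation parameter, use the Hermitian mixed inequality (Lemma~\ref{lem: mixed_ineq}) on Riemann sums, pass to the limit, and absorb the residual error in the $F$-term into the linear penalization. Two points, however, deserve scrutiny.

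First, a minor one: you invoke Proposition~\ref{lem: convergence} to pass from finite convex combinations to $w_t$, but that proposition treats \emph{monotone} sequences, whereas Riemann sums of the mollification converge \emph{uniformly}, not monotonically. The correct tool is Bedford--Taylor continuity of the Monge--Amp\`ere operator under uniform convergence of bounded quasi-psh functions, which does hold in the Hermitian setting (see e.g.\ \cite{dinew2012pluripotential,kolodziej2015weak}); the cite should be adjusted.

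Second, and more seriously: your step (iii) derives the bound $|u^s_t-w_t|=O(\varepsilon)$ from the Lipschitz control of $\f$ on a compact slab $[\delta,T']$, and then asserts that ``$\delta>0$ being arbitrary the subsolution property extends to all of $X_{T'}$.'' But the constant $C=C(\delta)$ produced this way grows with the time-Lipschitz constant $\kappa_{[\delta,T']}(\f)$, and membership $\f\in\mathcal P(X_T,\omega)$ alone places no constraint on how $\kappa_{[\delta,T']}$ blows up as $\delta\to0$; the term $\f(ts,x)-\f(t,x)$ is only controlled by $\kappa\cdot t|s-1|$, which is uniformly $O(\varepsilon)$ iff $\kappa\cdot t$ stays bounded, i.e.\ $\dot\f_t=O(1/t)$. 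Since the constant $B$ in the definition of $\Phi^\varepsilon$ must be fixed once and for all, the ``let $\delta\to0$'' step does not close the argument unless you either (a) add the hypothesis $|\dot\f_t|\le C/t$ (which is exactly what Proposition~\ref{comparison1} assumes on the subsolution when this regularization is used there), or (b) assume $F$ convex -- rather than merely locally semi-convex -- in $r$, so that Jensen gives $\int F(t,\cdot,u^s_t)\chi_\varepsilon\,ds\ge F(t,\cdot,w_t)$ with \emph{no} error term and the penalization is needed only for the initial-time inequality. As written, your argument has a genuine gap at this point; note it is a gap you inherit from the level of detail in the paper itself, which does not record the needed control on $\dot\f$ either.
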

\begin{proof} We use Lemma~\ref{lem_ss} and proceed exactly 
the same as in~\cite[Proposition 3.16]{guedj2020pluripotential}.
\end{proof}
	\subsection{Uniqueness}	\label{sect: unique_stable}
	We first establish the following comparison principle:
	\begin{proposition}\label{comparison1}
		Let $\f\in\mathcal{P}(X_T,\omega)\cap L^\infty(X_T)$ (resp. $\psi$) be a pluripotential subsolution (resp. supersolution) to \eqref{eq cmaf} with initial data $\f_0$ (resp. $\psi_0$). Assume that
		\begin{itemize}
			\item $\f_t\to\f_0$ and $\psi_t\to\psi_0$ in $L^1$ as $t\to 0$,
			
			\item $x\mapsto \f(\cdot,x)$ is continuous in $\Omega$ and $|\partial_t(t,x)|\leq C/t$ for all $(t,x)\in X_T$;
			
			\item $\psi$ is locally uniformly semi-concave in time $t\in (0,T)$;
			
		\end{itemize}
		Then \[\f_0\leq \psi_0\Rightarrow \f\leq \psi \,\, \text{on}\; X_T. \]
	\end{proposition}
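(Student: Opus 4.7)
The plan is to argue by contradiction, adapting the Kähler parabolic comparison principle of~\cite[Theorem 4.1]{guedj2020pluripotential} to the Hermitian setting, with the Kähler maximum principle replaced by the minimum principle of Proposition~\ref{min-prin}. Suppose $m := \sup_{X_T}(\f-\psi) > 0$. The first move is a standard two-parameter perturbation of the subsolution $\f$. Using Proposition~\ref{prop: reg}, replace $\f$ by a time-regularized $\Phi^\varepsilon$ which is $\mathcal{C}^1$ in $t$, still a pluripotential subsolution, and satisfies $\sup_X(\Phi^\varepsilon_0 - \f_0) \to 0$. Then twist in the direction of $\rho$ by setting
\[
\tilde{\f}^{\,\varepsilon,\eta} := (1-\eta)\Phi^\varepsilon + \eta\rho - \Lambda\eta t,
\]
for small $\eta>0$ and $\Lambda$ large. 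By Lemma~\ref{lem: mixed_ineq} together with $\theta+\dc\rho\geq\delta\omega_X$ and the Lipschitz bound on $F$ in $(t,r)$, one checks that $\tilde\f^{\,\varepsilon,\eta}$ is a pluripotential subsolution of the equation with a strictly smaller right-hand side, and moreover $\omega_t + \dc\tilde\f^{\,\varepsilon,\eta}_t \geq \eta\delta\omega_X$. Since $\tilde\f^{\,\varepsilon,\eta} \equiv -\infty$ on $\{\rho=-\infty\}$, it suffices to prove $\tilde\f^{\,\varepsilon,\eta} \leq \psi$ for every small $\eta$ and then let $\eta,\varepsilon\to 0^+$.

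Set $g(t) := \sup_X(\tilde\f^{\,\varepsilon,\eta}_t - \psi_t)$. The hypothesis $\f_0\leq\psi_0$, the $L^1$-convergences at $t=0$, and upper semicontinuity give $\limsup_{t\to 0^+}g(t)\leq 0$. The Lipschitz-in-time control of $\tilde\f^{\,\varepsilon,\eta}$ together with the local uniform semi-concavity of $\psi$ make $g$ locally Lipschitz on $(0,T)$. If $g$ ever exceeds $0$, let $t_0\in(0,T)$ be the first such time; then $g(t_0)=0$ and the right-derivative satisfies $g'(t_0^+)\geq 0$. By Proposition~\ref{prop: continuous}, $\psi_{t_0}$ is continuous on $\Omega$; by construction, so is $\tilde\f^{\,\varepsilon,\eta}_{t_0}$. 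Hence the contact set $K=\{\tilde\f^{\,\varepsilon,\eta}_{t_0}=\psi_{t_0}\}$ is a nonempty compact subset of $\Omega$, and one can pick an open neighborhood $D\Subset\Omega$ of $K$ on which $\psi_{t_0}-\tilde\f^{\,\varepsilon,\eta}_{t_0}>0$ on $\partial D$.

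At $t_0$, exploiting $g'(t_0^+)\geq 0$, the semi-concavity of $\psi$, and the $\mathcal{C}^1$-in-time regularity of $\Phi^\varepsilon$, one shows at a.e.\ point of $K$ the time-derivative inequality $\partial_t\tilde\f^{\,\varepsilon,\eta}_{t_0}\geq\partial_t\psi_{t_0}$. Combined with the sub/supersolution inequalities of Lemma~\ref{lem_subsol}, the monotonicity of $F(\cdot,\cdot,r)+\lambda_F r$, and the strict gain built into $\tilde\f^{\,\varepsilon,\eta}$, this yields
\[
(\omega_{t_0}+\dc\psi_{t_0})^n \;\leq\; c\,(\omega_{t_0}+\dc\tilde\f^{\,\varepsilon,\eta}_{t_0})^n \quad \text{on } D
\]
for some constant $c<1$ (depending on $\eta$ through the strict perturbation). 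Since $\omega_{t_0}+\dc\tilde\f^{\,\varepsilon,\eta}_{t_0}\geq\eta\delta\omega_X$, Proposition~\ref{min-prin} applies with $u=\tilde\f^{\,\varepsilon,\eta}_{t_0}$ and $v=\psi_{t_0}$ and gives
\[
\min_{\bar D}(\psi_{t_0}-\tilde\f^{\,\varepsilon,\eta}_{t_0})=\min_{\partial D}(\psi_{t_0}-\tilde\f^{\,\varepsilon,\eta}_{t_0})>0,
\]
contradicting the existence of a contact point in $D$.

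The main obstacle is the delicate Step~3: because $\psi$ is only semi-concave and $\partial_t\f$ may blow up like $1/t$, the inequality $\partial_t\tilde\f^{\,\varepsilon,\eta}_{t_0}\geq\partial_t\psi_{t_0}$ on the contact set (needed to feed the minimum principle, which is purely elliptic and time-frozen) cannot be read off pointwise. One must instead average in $t$ around $t_0^+$, use semi-concavity of $\psi$ to pass to one-sided time-derivatives, and then invoke Lemma~\ref{lem: rhs} and the weak continuity in Lemma~\ref{lem: cln} to justify the slicewise Monge–Ampère inequality on $D$. A secondary technical point is checking that the perturbation by $\rho$ genuinely gives a strict subsolution in the pluripotential sense on all of $X_T$ despite the analytic singularities of $\rho$; this is handled by applying Lemma~\ref{lem: mixed_ineq} on $\Omega$ and extending trivially since $\rho=-\infty$ outside.
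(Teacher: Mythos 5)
Your overall strategy---regularize $\f$ via Proposition~\ref{prop: reg}, twist toward $\rho$ to force the contact set into $\Omega$, compare time-derivatives at a first-contact time, and conclude via the elliptic minimum principle Proposition~\ref{min-prin}---matches the paper's, and the first-time argument is a workable variant of the paper's space--time maximum. However, there is a genuine gap in the twist $\tilde\f^{\varepsilon,\eta}=(1-\eta)\Phi^\varepsilon+\eta\rho-\Lambda\eta t$. To verify that $\tilde\f^{\varepsilon,\eta}$ is a pluripotential subsolution with a strict gain, one must bound $(\omega_t+\dc\tilde\f^{\varepsilon,\eta}_t)^n$ from below by $e^{\partial_t\tilde\f^{\varepsilon,\eta}_t+F(t,\cdot,\tilde\f^{\varepsilon,\eta}_t)+\delta_0}f\,dV$. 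Decomposing $\omega_t+\dc\tilde\f^{\varepsilon,\eta}_t=(1-\eta)(\omega_t+\dc\Phi^\varepsilon_t)+\eta(\omega_t+\dc\rho)$, Lemma~\ref{lem: mixed_ineq} requires both factors to dominate $e^{\bullet}f\,dV$ with the \emph{same} density $f$; but $\omega_t+\dc\rho\geq\delta\omega_X$ only gives $(\omega_t+\dc\rho)^n\geq\delta^n\omega_X^n$, and $\omega_X^n$ is not comparable to $f\,dV$ because $f\in L^p$ need not be bounded. If instead you simply drop the $\eta(\omega_t+\dc\rho)$ term and use $(\omega_t+\dc\tilde\f^{\varepsilon,\eta}_t)^n\geq(1-\eta)^n(\omega_t+\dc\Phi^\varepsilon_t)^n$, the resulting gain in the exponent is $n\log(1-\eta)+\eta\,\partial_t\Phi^\varepsilon_t+\Lambda\eta$; since $\partial_t\Phi^\varepsilon_t\to-\infty$ as $t\to0^+$ (from the $n\log t$ lower bound in Proposition~\ref{prop_1dot}), no fixed $\Lambda$ makes this nonnegative for all $t$, and a $\Lambda$ depending on the (unknown) contact time $t_0$ is circular.

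This is precisely the issue the paper flags: ``the time derivative $\dot\f_t$ may blow up at time zero, so we really need another auxiliary function.'' The paper twists instead with $\lambda\frac{\rho+\phi}{2}$, where $\phi\in\PSH(X,\theta)\cap L^\infty(X)$ solves $(\theta+\dc\phi)^n=c f\,dV_X$. Since $\omega_{t}+\dc\frac{\rho+\phi}{2}\geq\frac12(\theta+\dc\rho)+\frac12(\theta+\dc\phi)\geq\frac12(\theta+\dc\phi)$, one gets $(\omega_{t}+\dc\frac{\rho+\phi}{2})^n\geq 2^{-n}c\,f\,dV$, so the mixed inequality does apply with $\mu=f\,dV$; the $\rho$-part simultaneously supplies the strict positivity $\geq\frac{\delta}{2}\omega_X$ and the $-\infty$ barrier on $\partial\Omega$. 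Your proof would be repaired by replacing $\eta\rho$ in the twist by $\eta\frac{\rho+\phi}{2}$ (and adjusting the constants), after which the rest of your contact/minimum-principle argument goes through.
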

	The proof below relies on the arguments in~\cite{guedj2020pluripotential} (see also~\cite{guedj2018stability}). 
	\begin{proof}
		Fix $0<T'<T$, we will prove that $\f\leq \psi$ on $[0,T']\times X$. The proof then follows by letting $T'\to T$. Using the invariance properties of the set of assumptions (see \cite[Sect. 3.3]{guedj2020pluripotential}) we may assume that the function $r\mapsto F(\cdot,\cdot,r)$ is increasing in the last variable.
We divide the proof in several steps.

\medskip

\noindent\textbf{Step 1.}\label{step 1} We assume moreover that
\begin{itemize}
    \item $\f$ is $\mathcal{C}^1$ in $t\in (0,T)$;
    \item the function $(t,x)\mapsto\psi(t,x)$ is continuous on $[0,T)\times \Omega$.
\end{itemize}
The proof, in this step, relies on the arguments in~\cite[Proposition 4.2]{guedj2020pluripotential}. The only difference is that we will use the minimum principle established in the previous section.
\smallskip

We recall that $\rho$ is a $\theta$-psh function with analytic singularities such that $\theta+\dc\rho \geq \delta\omega_X$ for some $\delta>0$, and $\Omega=\{\rho>-\infty\}$. In particular, $\rho$ is smooth in $\Omega$ and $\rho=-\infty$ on $\partial\Omega$.
The standard strategy is to work with $(1-\lambda)\f_t+\lambda\rho$ instead of $\f_t$. However, the time derivative $\dot{\f}_t$ may blow up at time zero, so we really need another auxiliary function. It was shown in~\cite[Theorem 3.4]{guedj2021quasi} that there exist $c=c(\theta,f)>0$ and $\phi\in\PSH(X,\theta)\cap L^\infty(X)$ normalized by $\sup_X\phi=0$ such that
\begin{equation*}
    (\theta+\dc\phi)^n=cfdV_X.
\end{equation*}
		Fix $\lambda,\varepsilon>0$ small enough and consider $$w(t,x):=(1-\lambda)\f(t,x)+\lambda\frac{\rho(x)+\phi(x)}{2}-\psi(t,x)-2\varepsilon t,\quad  (t,x)\in [0,T']\times  X.$$
		This function is upper semi-continuous on $[0,T']\times \Omega$, and goes to $-\infty$ on $\partial\Omega$.  Hence $w$ attains its maximum at some point $(t_0,x_0)\in [0,T']\times \Omega$. We claim that $w(t_0,x_0)\leq 0$. Suppose by contradiction that $w(t_0,x_0)>0$, in particular $t_0>0$. Set \[K:=\{x\in X: w(t_0,x)=w(t_0,x_0) \}. \]
		Since $\f(\cdot,x_0)$ is differentiable in $(0,T)$, the classical maximum principle insures that for all $x\in K$,
		\begin{align*}
		(1-\lambda)\partial_t\f(t_0,x)\geq \partial_t^-\psi(t_0,x)+2\varepsilon.
		\end{align*}
		By assumption the partial derivative $\partial_t\f (t_0,x)$ is continuous on $\Omega$. By local semi-concavity of $t\mapsto\psi_t$ it then follows that for every $t\in (0,T)$, $\partial_t^-\psi(t,\cdot)$ is upper semi-continuous in $\Omega$. Moreover, by Proposition~\ref{prop: continuous}, $\psi_t$ is continuous on $\Omega$  for each $t\in(0,T)$.
		We thus can find $\delta>0$ so small that, by introducing
		an open neighborhood  of $K$ 
		\begin{equation*}
		    D:=\{x\in\Omega: w(t_0,x)>w(t_0,x_0)-\delta \}\subset\subset \Omega,
		\end{equation*}
		 one has
		\begin{align}\label{41}
		(1-\lambda)\partial_t\f(t_0,x)> \partial_t^-\psi(t_0,x)+\varepsilon,\,\forall\, x\in D.
		\end{align} 
		Set $u:=(1-\lambda)\f(t_0,\cdot)+\lambda\frac{\rho+\phi}{2}$ and $v=\psi(t_0,\cdot)$. Since $\f$ is a subsolution and $\psi$ is a supersolution of~\eqref{eq cmaf} we infer
		\begin{align*}
		(\omega_{t_0}+\dc u)^n\geq e^{F(t_0,x,u(x))-F(t,x,v(x))+\varepsilon}(\omega_{t_0}+\dc v)^n
		\end{align*} in the weak sense of measures in $D$, where we have used \eqref{41}. Recall that $u(x)>v(x)+\varepsilon t_0$ for all $x\in K$. Shrinking $D$ if necessary, we can assume that  $u(x)>v(x)$ for all $x\in D$. Since $F$ is increasing in $r$ we thus infer
		\begin{align*}
		(\omega_{t_0}+\dc u)^n\geq e^\varepsilon(\omega_{t_0}+\dc v)^n
		\end{align*} in the weak sense of measures in $D$. 
		It follows from~the minimum principle (see Proposition~\ref{min-prin})  that \begin{align*}
		\min\{x\in\bar{D}:v(x)-u(x) \}=\min\{x\in \partial D: v(x)-u(x) \}.
		\end{align*}
		In particular, for all $x\in D$,
		\begin{align}\label{42}
		u(x)-v(x)+\min_{\partial D}(v-u)\leq 0.
		\end{align}
		Since $K\cap\partial D=\varnothing$, we infer $w(t_0,x)<w(t_0,x_0)$ for all $x\in \partial D$, hence
		\begin{align*}
		u(x)-v(x)<u(x_0)-v(x_0),\, \forall \, x\in \partial D
		\end{align*}
		which contradicts \eqref{42}. Altogether this shows that $t_0=0$, therefore $$(1-\lambda)\f(t,x)+\lambda\frac{\phi(x)+\rho(x)}{2}- \psi(t,x)-2\varepsilon t\leq \lambda\sup_X|\f_0|$$ in $[0,T']\times \Omega$. Letting $\lambda\to 0$ and then $\varepsilon\to 0$ we obtain  $\f\leq \psi$ in $[0,T']\times \Omega$, hence in $[0,T']\times X$.

	\smallskip

\noindent\textbf{Step 2.} We finally get rid of the assumptions in Step~\href{step 1}{1}. For the assumption that $\f$ is $\mathcal{C}^1$ in $t$, we use Proposition~\ref{prop: reg} and proceed the same way as in~\cite[Theorem 4.1]{guedj2020pluripotential}. 

To remove the assumption on $\p$, the argument is the same as that of \cite[Lemma 3.15, Proposition 4.4]{guedj2020pluripotential}.


Moreover, we can get rid of the assumption $\f_0\leq \psi_0$. Indeed, if $M_0:=\sup_X(\f_0-\psi_0)$ then $\f_0-M_0$ is a subsolution of the same equations since $F$ is increasing in the last variable. Hence $\f-M_0\leq \psi$ in $X_T$, i.e.,
		\begin{align*}
		\sup_{X_T}(\f-\psi)\leq \sup_X(\f_0-\psi_0)_+.
		\end{align*}
\end{proof}

The proof of Theorem~\ref{thmB} follows immediately from the comparison principle that we have just established.
\subsection{Stability}
We assume here that 
\begin{itemize}
    \item $F,G: [0,T)\times X\times \mathbb{R}\to\mathbb{R}$ are continuous;
    \item $F$ and $G$ are increasing in the last variable;
    \item $F$ and $G$ are uniformly Lipschitz in the last variable with Lipschitz constants $L_F$ and $L_G$,
    \item $0\leq f,g\in L^p(X)$ for some $p>1$, 
    \item the family $(\omega_t)_{t\in [0,T)}$ is assumed to be as in the introduction; see~\eqref{assum: omega}.
\end{itemize} 

The Lipschitz assumption on $F$ means that for all $(t,x)\in X_T$,
    \begin{equation*}
        |F(t,x,r)-F(t,x,r')|\leq L_F|r-r'|,\quad r,r'\in\mathbb{R}.
    \end{equation*}

\begin{proposition}\label{prop: stability}
  Let $\f\in \mathcal{P}(X_T,\omega)\cap L^\infty(X_T)$ (resp. $\psi$) be a subsolution (resp. supersolution) to~\eqref{cmaf} with data $(F,f,\omega_t,\f_0)$ (resp. $(G,g,\omega_t,\psi_0)$). Assume that $\psi$ is locally semi-concave in time $t\in (0,T)$ and is continuous on $(0,T)\times X$. Assume also that for each $t\in (0,T)$, $\f_t$ is continuous on $X$. 
   Fix $\varepsilon>0$. There exist constants $\alpha, A,B>0$ such that for all $(t,x)\in [\varepsilon,T)\times X$,
    \begin{equation*}
        \f(t,x)-\psi(t,x)\leq B\|\f_\varepsilon-\psi_\varepsilon\|_{L^1(X)}^\alpha+T\sup_{X_T\times \mathbb{R}}(G-F)_+ +A\|f-g\|^{1/n}_p,
    \end{equation*} where $A,B>0$ depend on $\theta$, $n$, $p$, a uniform bound $\f$, $\psi$, $\dot{\f}$, and $\dot{\psi}$ on $[\varepsilon,T)\times X$, $L_G$, and $\sup_{X_T}G(\cdot,\cdot,\sup_{X_T}|\f|)$.
\end{proposition}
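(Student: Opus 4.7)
I would adapt the proof of the parabolic comparison principle (Proposition~\ref{comparison1}), tracking three additional error terms that arise from the three sources of discrepancy between the data $(F,f,\omega_t)$ of $\f$ and $(G,g,\omega_t)$ of $\psi$: the Hamiltonian difference $G-F$, the density difference $f-g$, and the lack of pointwise initial ordering at $t=\varepsilon$. Each discrepancy would be absorbed into an additive correction of $\f$ so that the resulting function becomes a pluripotential subsolution of the same equation as $\psi$ with initial trace $\leq\psi_\varepsilon$, allowing Proposition~\ref{comparison1} to conclude. To start, set $M:=\sup_{X_T\times\mathbb{R}}(G-F)_+$ and consider $\f^*(t,x):=\f(t,x)-M(t-\varepsilon)$; since $F$ is increasing in $r$ and $\f^*\leq\f$,
\[(\omega_t+\dc\f^*_t)^n=(\omega_t+\dc\f_t)^n\geq e^{\dot{\f}^*_t+M+F(t,x,\f^*_t)}fdV\geq e^{\dot{\f}^*_t+G(t,x,\f^*_t)}fdV,\]
so $\f^*$ is a pluripotential subsolution of~\eqref{cmaf} with data $(G,f,\omega_t)$ on $[\varepsilon,T)\times X$, generating the $T\sup(G-F)_+$ term in the bound.

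Next I would dispose of the density and initial-trace discrepancies simultaneously. Both $\f^*_\varepsilon$ and $\psi_\varepsilon$ are bounded $\omega_\varepsilon$-psh functions whose Monge--Amp\`ere masses have $L^p$ densities uniformly controlled by the $L^\infty$ bounds on $\dot{\f},\dot{\psi},F,G,\f,\psi$ on $[\varepsilon,T)\times X$ together with $f,g\in L^p$. A one-sided Kolodziej-type stability in the spirit of Theorem~\ref{thm: stability} then yields $\alpha=\alpha(p)\in(0,1)$ and $A,B>0$ with
\[\sup_X(\f^*_\varepsilon-\psi_\varepsilon)_+\leq B\|\f_\varepsilon-\psi_\varepsilon\|_{L^1(X)}^{\alpha}+A\|f-g\|_{L^p(X)}^{1/n}.\]
Setting $\tilde{\f}(t,x):=\f^*(t,x)-B\|\f_\varepsilon-\psi_\varepsilon\|_{L^1}^{\alpha}-A\|f-g\|_p^{1/n}$, I would verify, by combining the Lipschitz bound on $G$ with the mixed Monge--Amp\`ere inequality (Lemma~\ref{lem: mixed_ineq}) applied against an elliptic corrector $\chi$ solving $(\theta+\dc\chi)^n=c\,g\,dV$ (so that a small convex-combination perturbation converts an $f$-subsolution into a $g$-subsolution at the cost of the $A\|f-g\|_p^{1/n}$ shift), that $\tilde{\f}$ is a pluripotential subsolution of~\eqref{cmaf} with data $(G,g,\omega_t)$ on $[\varepsilon,T)\times X$ and satisfies $\tilde{\f}_\varepsilon\leq\psi_\varepsilon$ on $X$. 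Applying Proposition~\ref{comparison1} with parabolic boundary $\{\varepsilon\}\times X$ (the hypotheses on $\f$ and $\psi$ persist under additive shifts) then gives $\tilde{\f}\leq\psi$, which rearranges to the stated inequality.

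\textbf{Main obstacle.} The delicate point is the joint treatment of the density and initial-trace discrepancies: converting an $L^p$ density difference into a valid pluripotential additive correction is not automatic, since constant shifts do not change the Monge--Amp\`ere mass of $\tilde{\f}$. This forces one to go through a convex-combination argument based on Lemma~\ref{lem: mixed_ineq}, using the Lipschitz control on $G$ and on $\dot{\f}$ on $[\varepsilon,T)\times X$ to keep the correction of order $\|f-g\|_p^{1/n}$. A second subtle ingredient is extending the elliptic stability of Theorem~\ref{thm: stability} to a one-sided subsolution-versus-supersolution statement, with slice densities of the form $e^{\dot{\f}^*_\varepsilon+G(\varepsilon,\cdot,\f^*_\varepsilon)}f$ and $e^{\dot{\psi}_\varepsilon+G(\varepsilon,\cdot,\psi_\varepsilon)}g$ rather than $f$ and $g$ themselves; once the Lipschitz bounds absorb the differing exponential factors uniformly in $t$, the template of Propositions~\ref{comparison1} and~\ref{stab1} takes over.
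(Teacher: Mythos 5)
Your overall template is close to the paper's: shift $\f$ by a multiple of $t$ to absorb $G-F$; use a convex-combination perturbation against an elliptic corrector to absorb $f-g$; and use elliptic $L^1$--$L^\infty$ stability at the slice $t=\varepsilon$ to control the initial trace. But two of the three steps are not correctly set up, and the "main obstacle" paragraph reveals the confusion without resolving it.

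First, the elliptic corrector is the wrong one. You propose $\chi$ with $(\theta+\dc\chi)^n = c\,g\,dV$. With that choice, Lemma~\ref{lem: mixed_ineq} applied to $(1-\delta)\f^* + \delta\chi$ gives a lower bound of the form $e^{(1-\delta)(\dot\f^*+F)+\delta\log c}f^{1-\delta}g^{\delta}\,dV$, and to conclude subsolution with data $(G,g)$ you would need $f^{1-\delta}g^{\delta}\gtrsim g$, i.e. $(f/g)^{1-\delta}\gtrsim 1$ — which fails precisely where $g\gg f$, the region that matters. The paper instead takes $\phi$ with $(\theta+\dc\phi)^n = c_h\bigl(1+|f-g|/\|f-g\|_p\bigr)dV$ and $\delta\sim\|f-g\|_p^{1/n}$; it is the factor $|f-g|/\|f-g\|_p$ in the corrector's density, combined with superadditivity of mixed Monge--Amp\`ere masses, that lets a perturbation of size $\delta$ dominate $g$ wherever $g>f$. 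Second, your $\tilde\f$ is defined as $\f^*$ minus a constant. A constant shift does not change $\dc\tilde\f$, so $\tilde\f$ cannot become a subsolution for a different density $g$ by that shift alone; only the convex combination $(1-\delta)\f^*+\delta\phi$ changes the Monge--Amp\`ere operator, and that is not your $\tilde\f$. Related to this, you fold the density discrepancy and the initial-trace discrepancy into a single elliptic-stability bound at $t=\varepsilon$, but that bound only controls the slice $t=\varepsilon$; it produces no correction valid for $t\in(\varepsilon,T)$. The paper keeps the two separate: it applies the comparison principle to the perturbed parabolic potential $\f_\delta=(1-\delta)\f+\delta\phi+n\log(1-\delta)-B\delta t-Mt$ on $[\varepsilon,T)$ (getting a bound involving $\sup_X(\f_\varepsilon-\psi_\varepsilon)_+$ via the form $\sup_{X_T}(\f-\psi)\leq\sup_X(\f_0-\psi_0)_+$ of Proposition~\ref{comparison1}, so no ordering at the initial slice is needed), and only then estimates $\sup_X(\f_\varepsilon-\psi_\varepsilon)_+$ by Theorem~\ref{thm: stability}.

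A minor additional caveat: Theorem~\ref{thm: stability} is stated for two exact solutions; applying it at the slice with a sub- and a super-solution requires noting that $\f_\varepsilon$ and $\psi_\varepsilon$ can be bracketed by exact solutions with comparable densities, as the paper implicitly does by writing $(\theta+\dc\psi_\varepsilon)^n\leq e^{D(\varepsilon)}f\,dV$. This is straightforward once the two main gaps above are fixed.
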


\begin{proof}
  We are going to apply a perturbation argument as in \cite{guedj2018stability,guedj2020pluripotential} which was used in the work of Kolodziej~\cite{kolodziej1996sufficient}. If $\|f-g\|_p=0$ then $g\leq f$ almost everywhere on $X$. In this case, we can easily check that the function $(t,x)\mapsto \f(t,x)-Mt$ is a subsolution to~\eqref{cmaf} with data $(G,g,\omega_t,\f_0)$ and the proof thus follows from the comparison principle.
  
  Assume now $\|f-g\|_p>0$. It was shown in~\cite[Theorem 3.4]{guedj2021quasi} that there exist a constant $c_h>0$ and a function $\phi\in\PSH(X,\theta)\cap L^\infty(X)$, normalized by $\sup_X\phi=0$, such that
  \begin{equation*}
      (\theta+\dc \phi)^n=c_h hdV=c_h\left(1+\frac{|f-g|}{\|f-g\|_p}\right) dV.
  \end{equation*}
  Since $1\leq \|h\|_p\leq 2$, it follows from \cite[Lemma 3.3]{guedj2021quasi} that $c_h\geq c_1>0$ for some uniform constant $c_1$ (this means that $c_h$ is bounded from below away from 0).
  Set   
  \begin{equation*}
      M_0:=\sup_{X_T}\f,\quad M_1(\varepsilon):=\sup_{[\varepsilon,T)\times X}\dot{\f},\quad M_G=\sup_{X_T}G(t,x,M_0).
  \end{equation*}
  We also set $M_2(\varepsilon):=M_G+\max(L_GM_0,M_1(\varepsilon))$ and \begin{equation*}
     \delta:=\|f-g\|_p^{1/n}e^{(M_2(\varepsilon)-\ln c_1)/n}.
  \end{equation*}
  We may assume that $\delta$ is small enough. The computations in \cite[p. 1280-1281]{guedj2020pluripotential} shows that 
  \begin{equation*}
      \f_\delta(t,x):=(1-\delta)\f(t,x)+\delta\phi(x)+n\log(1- \delta)-B\delta t-Mt
  \end{equation*}
 is a subsolution to~\eqref{eq cmaf} on $[\varepsilon,T)$ with the data $(G,g)$, where  $B>0$ depends only on $n$, $\inf_{X_T}\f$ and $\inf_{[\varepsilon,T)\times X}\dot{\f}$, and $M>0$ is under control. 
 The comparison principle ensures that,  for all $(t,x)\in [\varepsilon,T)\times X$,
 \begin{equation*}
     \f(t,x)-\psi(t,x)\leq \max_X(\f(\varepsilon,\cdot)-\psi(\varepsilon,\cdot))_+ +TM+ A(\varepsilon)\|f-g\|_p^{1/n},
 \end{equation*}
 where $$A(\varepsilon):=(M_0+\|\phi\|_{L^\infty}+2n\log 2+BT)e^{(M_3(\varepsilon)-\ln c_1)/n}.$$ 
 Now we see that $\psi_\varepsilon$ is a supersolution to the degenerate elliptic equation
 \begin{equation*}
     (\theta+\dc\psi_\varepsilon)^n\leq e^{D(\varepsilon)}fdV
 \end{equation*} where $D(\varepsilon)$ is a upper bound of $\dot{\psi}(t,x)+G(t,x,\psi(t,x))$ on $[\varepsilon,T)\times X$. By Theorem~\ref{thm: stability}, there exists $\alpha>0$ depending on $n$, $p$ and a constant $C(\varepsilon)>0$ depending on $p$, $\theta$, $D(\varepsilon)$, and $\|f\|_p$ such that
 \begin{equation*}
     \max_X(\f_\varepsilon-\psi_\varepsilon)_+\leq C(\varepsilon)\left(\|(\f_\varepsilon-\psi_\varepsilon)\|_{L^1(X)}^\alpha+\|f-g\|_p^{1/n}\right).
 \end{equation*} This finishes the proof. 
\end{proof} 
We now finish the proof of Theorem~\ref{thmC}. Set $\Phi^j=\Phi(\omega_{t,j},F_j,f_j,\f_{0,j})$ and $\Phi=\Phi(\omega_t,F,f,\f)$. By Proposition~\ref{prop: stability} above, it suffices to show that the norm $\|\Phi^j_\varepsilon-\Phi_\varepsilon\|_{L^1(X)}$ goes to zero as $j\to+\infty$ for some small $\varepsilon>0$. Now we observe that the norm $\|\Phi^j_\varepsilon-\Phi_\varepsilon\|_{L^1(X)}$ is controlled by $\|\Phi^j-\Phi\|_{L^1([\varepsilon,T)\times X)}$ as follows from~\cite[Lemma 1.8]{guedj2018pluripotential}.   The proof  thus follows from Theorem~\ref{thm_211}. 

\section{Smoothness of the pluripotential flow}\label{sect: smooth}

In this section, we  establish, under some extra assumptions, a partial regularity of the pluripotential solution to the complex Monge-Amp\`ere flow~\eqref{cmaf} constructed in  previous sections. The assumptions on the density $f$ will become more transparent in the context of the Chern-Ricci flow on complex varieties with log terminal singularities. 
\subsection{Setup}
Recall that $(X,\omega_X)$ is a compact Hermitian manifold endowed with a reference hermitian form. Fixing $T\in (0,+\infty)$, let $(\omega_t)_{t\in [0,T]}$ be a smooth family of semi-positive forms such that for all $t\in [0,T]$, where $\theta\leq \omega_t$ with $\theta$  a semi-positive and volume non-collapsing $(1,1)$-form with big equivalence class, and
		\begin{equation}\label{assum: omega}
	    -A\omega_t\leq \dot{\omega}_t\leq A\omega_t\;\, \text{and}\;\, \ddot{\omega}_t\leq A\omega_t,\; \forall \, t\in [0,T].
	\end{equation} for some fixed constant $A>0$. Up to multiplying $\omega_X$ with a large constant, we may assume that $\omega_t\leq \omega_X$ for all $t\in[0,T]$.
We let $\Omega$ denote the "positive" locus of $\theta$; see Sect.~\ref{assumption}. 

In this section, we assume that $f$ is a positive measurable function on $X$ which is of the form
\begin{equation*}
    f=e^{\psi^+-\psi^-}
\end{equation*} where
\begin{itemize}
    \item $\psi^\pm$ are quasi-plurisubharmonic functions on $X$;
    \item $e^{-\psi^-}\in L^p$ for some $p>1$;
    \item $\psi^{\pm}$ are smooth in a given Zariski open subset $U\subset \Omega$.
\end{itemize}
Up to multiplying $\omega_X$ with a large positive constant, we may assume that  $\psi^{\pm}$ are both $\omega_X$-psh.
Assume furthermore that the function $F:[0,T]\times X\times\mathbb{R}\rightarrow\mathbb{R}$ is smooth. Given a $\omega_0$-psh function $\f_0$, our goal is to prove that there exists a unique bounded pluripotential solution $\f\in\mathcal{P}(X_T,\omega)\cap L^\infty(X_T)$ with $\f_t\to\f_0$ in $L^1(X)$ as $t\to 0^+$
and
such that on $(0,T)\times U$, $\f$ is smooth and satisfies 
\begin{equation*}
    (\omega_t+\dc\f_t)^n=e^{\partial_t\f_t+F(t,x,\f_t)}f(x)dV(x).
\end{equation*}

\subsection{A priori Laplacian estimates}
We are now going to estimate $\Delta\f_t$, where $\Delta$ denotes the Laplacian with respect to the reference metric $\omega_X$. The arguments follow the ones in \cite{boucksom2013regularizing} but there are extra difficulties coming from the torsion terms.

Since $U\subset \Omega$ is a Zariski open subset, we may choose a $\theta$-psh function $\rho_U$ with analytic singularities along $\partial U$ such that \begin{equation*}
    \theta+\dc\rho_U\geq 2\delta\omega_X
\end{equation*} for some $\delta>0$ (see e.g. \cite[Lemma 4.3.2]{boucksom2013regularizing}). In particular, $\rho_U\to -\infty$ near $\partial U$. We may normalize $\rho_U$ so that $\sup_X\rho_U= 0$.
\begin{theorem}\label{thm: laplacian}
  With the previous setup, assume also that $(\omega_t)_{t\in[0,T]}$ is a smooth path of \emph{Hermitian forms} on $X$. Assume moreover that $\psi^\pm$ are smooth $\omega_X$-psh functions on $X$ and 
  $
      \sup_X\psi^{\pm}\leq C
$ for some constant $C>0$. Let $\f_0$ be a smooth $\omega_0$-psh function on $X$. Suppose that $\f\in\mathcal{C}^\infty([0,T]\times X)$ satisfies the parabolic complex Monge-Amp\`ere equation
  \begin{equation*}
      \begin{cases}
        (\omega_t+\dc\f_t)^n=e^{\dot{\f}_t+F(t,\cdot,\f_t)+\psi^+-\psi^-}\omega_X^n\\
        \omega_t+\dc\f_t>0 \\
        \f|_{\{0\}\times X}=\f_0.
      \end{cases}
  \end{equation*}
  Then for each $\varepsilon_0>0$ and for each compact set $K\subset U$, there exists a constant $B>0$ only  depending on $K$, $\theta$, $\varepsilon_0$, $T$, $C$, $\sup_{[0,T]\times X}|\f|$, and $\sup_{[\varepsilon_0,T]\times X}|\dot{\f}|$ such that
  \begin{equation*}
    \sup_{[\varepsilon_0,T]\times K}  |\Delta\f_t|\leq Be^{-\delta\psi^-}.
  \end{equation*}
\end{theorem}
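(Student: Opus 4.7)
The plan is to apply the parabolic maximum principle to an auxiliary function combining $\log\tr_{\om_X}\om'_t$ (with $\om'_t := \om_t + \dc\f_t$), a barrier built from $\rho_U$ and $\f$, and a multiple of $\psi^-$, in the spirit of the elliptic argument of Boucksom--Eyssidieux--Guedj--Zeriahi and its parabolic/K\"ahler adaptation by T\^o. Concretely, I would consider on $[\varepsilon_0/3, T]\times X$ the quantity
\begin{equation*}
H(t, x) := (t - \varepsilon_0/3)\log \tr_{\om_X}\om'_t(x) + \delta \psi^-(x) - A\bigl(\f_t(x) - \rho_U(x)\bigr),
\end{equation*}
where $A \gg 1$ is to be chosen and $\delta > 0$ is the barrier constant from $\theta + \dc\rho_U \geq 2\delta\om_X$. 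The $+A\rho_U$ contribution forces $H \to -\infty$ along $\partial U$, while the cutoff $(t-\varepsilon_0/3)$ makes $H$ bounded above at the parabolic boundary $t = \varepsilon_0/3$ solely in terms of $\sup|\f|$ and compact data on $U$, avoiding any dependence on $\|\Delta\f_0\|$.

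At an interior maximum $(t_0, x_0)$ with $t_0 > \varepsilon_0/3$ and $x_0 \in U$, I would exploit $(\partial_t - \Delta_{\om'})H \geq 0$ with three standard inputs: (a) the Hermitian Aubin--Yau inequality $(\partial_t - \Delta_{\om'})\log\tr_{\om_X}\om' \leq C_0(\tr_{\om'}\om_X + 1)$ of Tosatti--Weinkove~\cite{tosatti2015evolution}, where $C_0$ depends on torsion and curvature of $\om_X$ and the bounds in~\eqref{assum: omega}; (b) the identity $\Delta_{\om'}(\f - \rho_U) = n - \tr_{\om'}(\om_t + \dc\rho_U) \leq n - 2\delta\tr_{\om'}\om_X$, using $\om_t + \dc\rho_U \geq \theta + \dc\rho_U \geq 2\delta\om_X$; and (c) the lower bound $\Delta_{\om'}\psi^- \geq -\tr_{\om'}\om_X$ coming from $\dc\psi^- \geq -\om_X$. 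Choosing $A$ so that $2A\delta > TC_0 + \delta + 1$, the $\tr_{\om'}\om_X$ terms combine with a strictly negative coefficient and yield an inequality of the form
\begin{equation*}
\tr_{\om'}\om_X(t_0, x_0) \leq \log\tr_{\om_X}\om'(t_0, x_0) + \delta\psi^-(x_0) + C(\varepsilon_0),
\end{equation*}
where Proposition~\ref{prop_1dot} absorbs $|\dot\f(t_0, x_0)|$ into $C(\varepsilon_0)$ (permissible because $t_0 \geq \varepsilon_0/3$).

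The next step is to substitute the Monge--Amp\`ere equation in the form $\log\tr_{\om_X}\om' \leq C_n + \dot\f + F + \psi^+ - \psi^- + (n-1)\log\tr_{\om'}\om_X$, which follows from the elementary eigenvalue inequality $n\lambda_1 \leq C_n \det\cdot(\tr_{\om'}\om_X)^{n-1}$ applied to the eigenvalues of $\om'$ with respect to $\om_X$. Absorbing the logarithmic term, this gives $\tr_{\om'}\om_X(t_0, x_0) \leq C_1(1 - \psi^-(x_0))$; plugging back controls $\log\tr_{\om_X}\om'(t_0, x_0)$ by a quantity of the form $C_2 - \psi^-(x_0) + (n-1)\log(1-\psi^-(x_0))$, from which one verifies that $H(t_0, x_0)$ is bounded above by a constant depending only on the permitted data. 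Thus $H \leq C$ uniformly on $[\varepsilon_0/3, T] \times \bar U$; restricting to $[\varepsilon_0, T] \times K$, where $t - \varepsilon_0/3 \geq 2\varepsilon_0/3$ and $|\rho_U|$ is bounded, this transforms into $\log\tr_{\om_X}\om'_t(x) \leq C_K - \delta\psi^-(x)$ after absorbing constants, whence $|\Delta\f_t| \leq B\,e^{-\delta\psi^-}$ via $\om' > 0$.

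The main technical obstacle is input (a): in the non-K\"ahler setting $d\om_X \neq 0$, so the direct Aubin--Yau computation generates torsion cross-terms and a ``good'' gradient term $|\partial\tr_{\om_X}\om'|^2_{\om'}/\tr_{\om_X}\om'$ that must be absorbed simultaneously, either via the exponential Cherrier-type trick or by adding an extra Lipschitz potential barrier in $\f$, as implemented in~\cite{tosatti2015evolution}. Adapting this to the parabolic setting (with time-dependent $\om_t$ governed by~\eqref{assum: omega}) requires some care but is essentially routine. A secondary subtlety is the bookkeeping of constants in the final exponent: with the cutoff chosen above, the dependence on $\varepsilon_0$ and $T$ is absorbed into the prefactor $B$, while the exponent remains proportional to the barrier constant $\delta$.
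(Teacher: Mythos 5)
Your skeleton is in the right spirit (parabolic maximum principle on an auxiliary function combining $\log\tr_{\om_X}\tom_t$, a $\rho_U$-barrier and a $\psi^-$-term, closed by the BEGZ eigenvalue inequality), but the way the degenerate density enters the computation is not handled, and your inputs (a), (c) together with the choice of coefficient $\delta$ on $\psi^-$ conspire to miss the essential mechanism.

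Input (a) is not correct here. Differentiating the equation in $t$ gives
$\dc\dot\f_t = -\textrm{Ric}(\tom_t) + \textrm{Ric}(\om_X) - \dc F - \dc\psi^+ + \dc\psi^-$,
so $\partial_t\log\tr_{\om_X}\tom_t$ contains the quantity $\Delta\psi^- / \tr_{\om_X}\tom_t$. This is \emph{not} dominated by a uniform $C_0(\tr_{\tom_t}\om_X + 1)$: near $\{\psi^-=-\infty\}$ the eigenvalues of $\dc\psi^-$ are unbounded, so no constant $C_0$ depending only on $\om_X$ and the bounds in~\eqref{assum: omega} can work (remember: the final constant $B$ is allowed to depend on $\sup_X\psi^\pm\leq C$, not on $\|\psi^\pm\|_{\mathcal C^2}$). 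The paper controls this via $\frac{n+\Delta\psi^-}{\tr_{\om_X}\tom_t}\leq\tr_{\tom_t}(\om_X+\dc\psi^-)$, which brings $\tr_{\tom_t}\dc\psi^-$ into the picture.

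Your barrier $+\delta\psi^-$ is exactly the device meant to cancel this: its Laplacian contributes $-\delta\,\tr_{\tom_t}\dc\psi^-$ to $(\partial_t-\Delta_{\tom_t})H$. But you use (c) to discard this as an innocuous bad term $\leq\delta\,\tr_{\tom_t}\om_X$, throwing away the cancellation. Keeping it and combining with the previous paragraph leaves a quantity of the shape $\bigl(t-\text{(coeff. of }\psi^-\text{)}\bigr)\tr_{\tom_t}(\om_X+\dc\psi^-)$, and since $t$ ranges up to $T$, the coefficient on $\psi^-$ in $H$ must be at least of order $T$, not $\delta$. In the typical case $T\gg\delta$ your linear barrier $+\delta\psi^-$ is far too weak, and the sign goes the wrong way as $\psi^-\to-\infty$ (which is also visible in your final bookkeeping: plugging $\log\tr\lesssim -\psi^-$ back into $H$ gives a coefficient $-(t_0-\e_0/3)+\delta$ on $\psi^-$, which is positive when $t_0>\e_0/3+\delta$, so $H(t_0,x_0)$ is not bounded above). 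The paper sidesteps this by packaging $-\delta\psi^-$ inside $u_t$ and composing with a concave increasing $\gamma(s)=Bs-s^{-1}$: the effective coefficient on $\psi^-$ in $H$ becomes $\gamma'(u)\,\delta\geq B\delta$, and $B\geq T/\delta$ is chosen precisely so the cancellation works uniformly in $t$.

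Finally, for the torsion cross terms that you flag at the end: the paper's $\gamma$-twist and the extra term $1/(\f_{t+\e}+C_0)$ in $H$ are not decorative. They produce the good gradient terms $-\gamma''(u)|\partial u|^2_{\tom_t}$ and $|\partial\f|^2_{\tom_t}/(\f+C_0)^3$ that absorb, via Cauchy--Schwarz and the critical-point equation $\partial_{\bar\imath}H=0$, the Hermitian cross terms coming from $|\partial\tr_{\om_X}\tom|^2_{\tom_t}/\tr^2$. A purely linear barrier in $\f-\rho_U$ gives no such quadratic gradient contribution, so these cross terms cannot be absorbed with your $H$, even after the fixes above.
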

\begin{proof}
Fix $\varepsilon>0$ small.
By previous estimates, we have
\begin{equation*}
    \sup_{[0,T]\times X}|\f|\leq C_0-1,\qquad\sup_{[\varepsilon,T]\times X}|\dot{\f}|\leq C_0',
\end{equation*}
where $C_0$ explicitly depends on $A$, $\theta$, $\omega_X$, $T$, $\|f\|_{L^p}$, $\sup_X|\f_0|$, and $\sup_{X_T}F(t,x,0)$ while $C_0'$ depends on $C_0$, $\varepsilon$,  $\|\partial F/\partial r\|_{L^\infty}$.

We consider the function
\begin{equation*}
    H(t,\cdot):=t\log\tr_{\omega_X}(\tom_t)-\gamma(u_t)+\frac{1}{\f_{t+\varepsilon}+C_0},
\end{equation*}
where \begin{equation*}
    \tom_t=\omega_{t+\varepsilon}+\dc\f_{t+\varepsilon}, \qquad u_t(x)=\f_{t+\varepsilon}(x)-\rho_U(x)-\delta\psi^-(x)+C_0+\delta C\geq 1,
\end{equation*} and $\gamma:\mathbb{R}\to\mathbb{R}$ is a smooth concave increasing function such that $\lim_{t\to+\infty}\gamma(t)=+\infty$. We are going to show that $H$ is uniformly bounded from above for an appropriate choice of $\gamma$. The proof will thus follow since $u$ is uniformly bounded on compact subsets of $U$. 

We let $g$ denote the Hermitian metric associated to $\omega_X$ and $\Tilde{g}_t$ the one associated to $\tilde{\omega}_t=\omega_{t+\varepsilon}+\dc\f_{t+\varepsilon}$.
Since $\rho_U\to-\infty$ near $\partial U$, the function $H$  thus attains its maximum at some point $(t_0,x_0)\in [0,T]\times U$. If $t_0=0$ then $$\sup_{[0,T]\times U}H\leq -\gamma(1)+1.$$
It thus suffices to treat the case $t_0>0$. We may assume $\tr_{\omega_X}(\tom_t)\geq 1$ at $(t_0,x_0)$.

In order to apply the maximum principle, we need to perform a change of coordinates made possible by the following lemma from \cite[Lemma 2.1]{guan2010complex} (see also \cite[Lemma 2.9]{streets2011hermitian} for a similar argument). 
\begin{lemma}
    There exists a holomorphic coordinate system centered at $x_0$ such that for all $i$, $j$,
    \begin{equation*}
        g_{i\bar{j}}=\delta_{ij},\quad \frac{\partial g_{i\bar{i}}}{\partial z_j}=0
    \end{equation*} and also that the matrix $\Tilde{g}_{i\bar{j}}(t_0,x_0)$ is diagonal.
\end{lemma}
We let $\Delta_t=\tr_{\tom_t}\dc$ denote the Laplacian with respect to $\tom_t$.
Applying $\left(\frac{\partial}{\partial t}-\Delta_t\right)$ to $H$, 
\begin{equation}\label{eq: max}
    \begin{split}
        \left(\frac{\partial}{\partial t}-\Delta_t\right)H&= \log\tr_{\omega_X}(\tom_t)+t\frac{\tr_{\om_X}(\dot{\omega}_{t+\varepsilon}+\dc\dot{\f}_{t+\varepsilon})}{\tr_{\omega_X}(\tom_t)}-t\frac{\Delta_t\tr_{\omega_X}(\tom_t)}{\tr_{\omega_X}(\tom_t)}+t\frac{|\partial\tr_{\omega_X}(\tom_t)|_{\tom_t}^2}{(\tr_{\omega_X}(\tom_t))^2}\\
       &\quad-\gamma'(u)\dot{u}_t+\gamma'(u)\Delta_t u_t+\gamma''(u)|\partial u_t|^2_{\tom_t}\\
       &\quad-\frac{\dot{\f}_{t+\varepsilon}}{(\f_{t+\varepsilon}+C_0)^2}+\frac{\Delta_t\f_{t+\varepsilon}}{(\f_{t+\varepsilon}+C_0)^2}-\frac{2|\partial \f_{t+\varepsilon}|^2_{\tom_t}}{(\f_{t+\varepsilon}+C_0)^3}.
    \end{split}
\end{equation}
 Following computations in~\cite[Eq. (3.20)]{to2018regularizing} 
we obtain 
 \begin{equation}\label{eq: delta}
     \begin{split}
       \Delta_t\tr_{\omega_X}(\tom_t)&\geq \sum_{i,j,k}\Tilde{g}^{i\bar{i}}g^{j\bar{j}}\Tilde{g}^{j\bar{j}}\Tilde{g}_{i\bar{j}k}\Tilde{g}_{j\bar{i}\bar{k}}-\tr_{\omega_X}\textrm{Ric}(\tilde{\omega}_t)
          -C_1\tr_{\omega_X}(\tom_t)\tr_{\tom_t}(\omega_X).
     \end{split}
 \end{equation}
On the other hand, we have 
 \begin{equation*}\begin{split}
      \textrm{Ric}(\tom_t)&=\textrm{Ric}(\omega_X) -\dc(\dot{\f}_{t+\varepsilon}+F(t+\varepsilon,\cdot,\f_{t+\varepsilon}))-\dc(\psi^+-\psi^-)\\
      &\leq C_2\omega_X+\dc\psi^- -\dc(\dot{\f}_{t+\varepsilon}+F(t+\varepsilon,\cdot,\f_{t+\varepsilon}))
 \end{split}
 \end{equation*} with $C_2>0$ under control, using $\dc\psi^+\geq -C\omega_X$ and $\textrm{Ric}(\omega_X)\leq C\omega_X$.
 Plugging this into~\eqref{eq: delta} we have
 \begin{equation}
     \begin{split}
         \Delta_t\tr_{\omega_X}(\tom_t)&\geq \sum_{i,j}\Tilde{g}^{i\bar{i}}\Tilde{g}^{j\bar{j}}\Tilde{g}_{i\bar{j}j}\Tilde{g}_{j\bar{i}\bar{j}}-\tr_{\omega_X}(C_3\omega_X+\dc\psi^-)+\Delta(\dot{\f}_{t+\varepsilon}+F(t+\varepsilon,\cdot,\f_{t+\varepsilon}))\\ &\quad-C_3\tr_{\tom_t}(\omega_X) -C_4\tr_{\omega_X}(\tom_t)\tr_{\tom_t}(\omega_X),
     \end{split}
 \end{equation} which yields
 \begin{equation}\label{eq: delta2}
 \begin{split}
     \frac{\tr_{\om_X}(\dot{\omega}_t+\dc\dot{\f}_t)}{\tr_{\omega_X}(\tom_t)}-\frac{\Delta_t\tr_{\omega_X}(\tom_t)}{\tr_{\omega_X}(\tom_t)}&\leq\frac{C_5+\Delta\psi^- -\Delta F}{\tr_{\omega_X}(\tom_t)}-\frac{1}{\tr_{\omega_X}(\tom_t)} \sum_{i,j}\Tilde{g}^{i\bar{i}}\Tilde{g}^{j\bar{j}}\Tilde{g}_{i\bar{j}j}\Tilde{g}_{j\bar{i}\bar{j}}\\
     &\quad+C_3\frac{\tr_{\tom_t}(\omega_X)}{\tr_{\omega_X}(\tom_t)}+C_4\tr_{\tom_t}(\omega_X),
 \end{split}
 \end{equation} using $\dot{\omega}_t\leq A\omega_X$ and $-\dc\psi^+\leq C\omega_X$. Note here that $\Delta:=\textrm{tr}_{\omega_X}\dc$ denotes the Laplacian with respect to $\omega_X$.

From standard arguments as in~\cite[Eq. (4.5), p. 29]{guedj2021quasi},  we have  
 \begin{equation}\label{eq: error}
    \frac{{|\partial\tr_{\omega_X}(\tom_t)|_{\tom_t}^2}}{(\tr_{\omega_X}(\tom_t))^2}\leq \frac{1}{\tr_{\omega_X}(\tom_t)}\left(   \sum_{i,j}\Tilde{g}^{i\bar{i}}\Tilde{g}^{j\bar{j}}\Tilde{g}_{i\bar{j}j}\Tilde{g}_{j\bar{i}\bar{j}} \right)+C\frac{\tr_{\tom_t}(\omega_X)}{(\tr_{\omega_X}(\tom_t))^2}+\frac{2}{(\tr_{\omega_X}(\tom_t))^2}\textrm{Re}\sum_{i,j,k} \Tilde{g}^{i\bar{i}} T_{ij\bar{j}}\Tilde{g}_{k\bar{i}\bar{k}},
 \end{equation} where $T_{ij\bar{j}}:=\tilde{g}_{j\bar{j}i}-\tilde{g}_{i\Bar{j}j}$ is the torsion term corresponding to $\omega_{t+\varepsilon}$ which is under control: $|T_{ij\bar{j}}|\leq C$.
 Now at the point $(t_0,x_0)$, we have $\partial_{\Bar{i}}H=0$, hence
 \begin{equation*}
     t\sum_k \Tilde{g}_{k\bar{k}\Bar{i}}=\tr_{\omega_X}(\tom_t)\left(\gamma'(u)u_{\bar{i}}+\frac{\f_{\bar{i}}}{(\f_{t+\varepsilon}+C_0)^2}\right).
 \end{equation*}
 Cauchy-Schwarz inequality yields
 \begin{equation*}
     \begin{split}
         \left|\frac{2}{(\tr_{\omega_X}(\tom_t))^2}\textrm{Re}\sum_{i,j,k} \Tilde{g}^{i\bar{i}} T_{ij\bar{j}}\Tilde{g}_{k\bar{i}\bar{k}} \right|&=\frac{2}{(\tr_{\omega_X}(\tom_t))^2}\left|\textrm{Re}\sum_{i,j,k} \Tilde{g}^{i\bar{i}} T_{ij\bar{j}}\Tilde{g}_{k\bar{k}\bar{i}}+\textrm{Re}\sum_{i,j,k} \Tilde{g}^{i\bar{i}} T_{ij\bar{j}}T_{ik\bar{k}}\right|\\
         &\leq C\frac{\gamma'(u)^2}{-t\gamma''(u)}\frac{\tr_{\tom_t}(\omega_X)}{(\tr_{\omega_X}(\tom_t))^2}+(-\gamma''(u))\frac{|\partial u|^2_{\tom_t}}{t}\\
         &\quad + C\frac{\tr_{\tom_t}(\omega_X)}{t(\tr_{\omega_X}(\tom_t))^2} + \frac{|\partial\f_{t+\varepsilon}|^2_{\tom_t}}{t(\f_{t+\varepsilon}+C_0)^3}  +2C\frac{\tr_{\tom_t}(\omega_X)}{(\tr_{\omega_X}(\tom_t))^2}
     \end{split}
 \end{equation*} where we have used that $|T_{ij\Bar{j}}|\leq C$.  Together with~\eqref{eq: error} this yields at $(t_0,x_0)$,
 \begin{equation}\label{eq: error2}
     \begin{split}
         \frac{{|\partial\tr_{\omega_X}(\tom)|_{\tom_t}^2}}{(\tr_{\omega_X}(\tom_t))^2}&\leq \frac{1}{\tr_{\omega_X}(\tom_t)}\left(   \sum_{i,j}\Tilde{g}^{i\bar{i}}\Tilde{g}^{j\bar{j}}\Tilde{g}_{i\bar{j}j}\Tilde{g}_{j\bar{i}\bar{j}} \right)+\frac{|\partial\f_{t+\varepsilon}|^2_{\tom_t}}{t(\f_{t+\varepsilon}+C_0)^3}\\
         &\quad
          +C_6\left(\frac{\gamma'(u)^2}{-t\gamma''(u)}+\frac{1}{t}+2\right)\frac{\tr_{\tom_t}(\omega_X)}{(\tr_{\omega_X}(\tom_t))^2}+(-\gamma''(u))\frac{|\partial u_t|^2_{\tom_t}}{t}.
     \end{split}
 \end{equation}
 On the other hand, we observe that
 \begin{equation}\label{eq: lap_u}
     \gamma'(u)\Delta_tu=\gamma'(u)(n-\tr_{\tom_t}(\omega_{t+\varepsilon}+\dc(\rho_U+\delta\psi^-))\leq \gamma'(u)(n-\delta\tr_{\tom_t}(2\omega_X+\dc\psi^-)).
 \end{equation}
 Plugging  \eqref{eq: delta2}, \eqref{eq: error2} and~\eqref{eq: lap_u} into~\eqref{eq: max}, we obtain at $(t_0,x_0)$
 \begin{equation}\label{eq: max2}
     \begin{split}
         0\leq \left(\frac{\partial}{\partial t}-\Delta_t\right)H &\leq\log\tr_{\omega_X}(\tom_t) +t\frac{C_5+\Delta\psi^- -\Delta F}{\tr_{\omega_X}(\tom_t)}-\delta\gamma'(u)\tr_{\tom_t}(\omega_X+\dc \psi^-)
          \\
         &\quad +(C_7 T-\delta\gamma'(u))\tr_{\tom_t}(\omega_X) +C_6\left(\frac{\gamma'(u)^2}{-\gamma''(u)}+C+2T\right)\frac{\tr_{\tom_t}(\omega_X)}{(\tr_{\omega_X}(\tom_t))^2}\\
         &\quad+ n\gamma'(u)-\gamma'(u) \dot{\f}_{t+\varepsilon}-\frac{\dot{\f}_{t+\varepsilon}}{(\f_{t+\varepsilon}+C_0)^2}+\frac{\Delta_t\f_{t+\varepsilon}}{(\f_{t+\varepsilon}+C_0)^2}-\frac{|\partial \f_{t+\varepsilon}|^2_{\tom_t}}{(\f_{t+\varepsilon}+C_0)^3},
     \end{split}
 \end{equation} using that $\tr_{\omega_X}(\tom_t)\geq 1$ at $(t_0,x_0)$. 
 We now take care of the term $\Delta \psi^-$. Observe that
 
 $$0\leq \omega_X+\dc\psi^-\leq \tr_{\tom_t}(\omega_X+\dc\psi^-)\tom_t,$$
 and we take the trace with respect to $\omega_X$
 \begin{equation}\label{eq: lap_p}
     0\leq \frac{n+\Delta_{}\psi^-}{\tr_{\omega_X}(\tom_t)}\leq \tr_{\tom_t}(\omega_X+\dc \psi^-).
 \end{equation}
  For the Laplacian of $F$, we note that
\begin{equation*}
    \Delta F(\cdot,z,\f)=\Delta F(\cdot,z,\cdot)+2 Re\left(\sum g^{i\Bar{j}} \left(\frac{\partial F}{\partial r}\right)_i\f_{\bar{j}}\right)+\frac{\partial F}{\partial r}\Delta\f+\frac{\partial^2 F}{\partial r^2}|\partial\f|_{\omega_X}^2.
\end{equation*}
On the right hand-side, the first term is uniformly bounded on $X$, the last one is non-negative by the convexity assumption of $F$. Next, there is a constant $C>0$ depending on $\sup_{X_T}|\f|$ such that
\begin{align*}
    -\frac{\partial F}{\partial r}\Delta\f\leq C{\rm tr}_{\omega_X}(\tom_t)
\end{align*} 
since $\omega_t$ is semi-positive, and we have assumed that ${\rm tr}_{\omega_X}(\tom_t) \geq 1$.  Next, there exists a constant $C>0$ depending on $\sup_{X_T}|\f|$ such that
\begin{equation*}
    \begin{split}
        -2 Re\left(\sum g^{i\Bar{j}} \left(\frac{\partial F}{\partial r}\right)_i\f_{\bar{j}}\right)\leq |\partial \f|^2_{\tom_t}+C{\rm tr}_{\omega_X}(\tom_t),
    \end{split}
\end{equation*} which can be proved by using Arithmetic-Geometric Mean inequality.
Altogether, we thus obtain
\begin{equation*}
    -\frac{\Delta F(t,\cdot,\f)}{\text{tr}_{\omega}(\tom_t)}\leq C{\rm tr}_{\tom_t}(\omega_X)+C+\frac{|\partial\f|^2_{\tom_t}}{{\rm tr}_{\omega_X}(\tom_t)}
\end{equation*} using again $\text{tr}_{\omega_X}(\tom_t)\text{tr}_{\tilde{\omega}_t}(\omega_X)\geq n^2$.  Together with~\eqref{eq: lap_p}, we obtain
\begin{equation*}
    \frac{C_5+\Delta\psi^- -\Delta F}{\tr_{\omega_X}(\tom_t)}\leq \tr_{\tom_t}(\omega_X+\dc \psi^-) +C_8{\rm tr}_{\tom_t}(\omega_X)+\frac{|\partial\f|^2_{\tom_t}}{{\rm tr}_\omega(\tom_t)}.
\end{equation*}
From this, we have
\begin{equation}\label{eq: lap_F}\begin{split}
   t\frac{C_5+\Delta\psi^- -\Delta F}{\tr_{\omega_X}(\tom_t)}-\delta\gamma'(u)\tr_{\tom_t}(\omega_X+\dc \psi^-)&\leq(T-\delta\gamma'(u))\tr_{\tom_t}(\omega_X+\dc \psi^-) \\
   &\quad+C_8T{\rm tr}_{\tom_t}(\omega_X)+T\frac{|\partial\f|^2_{\tom_t}}{{\rm tr}_{\omega_X}(\tom_t)}.
\end{split}
\end{equation}
We now choose the function $\gamma$ as follows $$\gamma(s):=Bs-\frac{1}{s}$$ with $B>0$ so large that $B\geq 3+(C_7+1)T\delta^{-1}$.  Since $u\geq 1$, we have
\begin{equation*}
    B\leq \gamma'(u)\leq 1+B, \qquad C_6\left(\frac{\gamma'(u)^2}{-\gamma''(u)}+C+2T\right)\leq C_{9}u^3.
\end{equation*}
Plugging this together with~\eqref{eq: lap_F} into \eqref{eq: max2} we obtain
\begin{equation}\label{eq: max3}
    \begin{split}
        0&\leq \log\tr_{\omega_X}(\tom_t)-3\tr_{\tom_t}(\omega_X)+C_9u^3\frac{\tr_{\tom_t}(\omega_X)}{(\tr_{\omega_X}(\tom_t))^2}\\
       & \quad +\left(\frac{T}{{\rm tr}_{\omega_X}(\tom_t)}-\frac{1}{(\f_{t+\varepsilon}+C_0)^3}\right)|\partial\f_{t+\varepsilon}|^2_{\tom_t}+C_{10}
    \end{split}
\end{equation} with $C_{10}$ only depending on $n$, $\sup_{X_T}|\f_t|$ and $\sup_{X_T}|\dot{\f}_{t+\varepsilon}|$, where have used $\Delta_t\f_{t+\varepsilon}\leq n$. 
If  we have at $(t_0,x_0)$, $\tr_{\omega_X}(\tom_t)\leq \sqrt{C_9 u_t^3}+T(\f_{t+\varepsilon}+C_0)^3$
 then at the same point
 \begin{equation*}
     H\leq T\log (\sqrt{C_9 u^3}+ 8TC_0^3)-\gamma(u)+1\leq C_{11},
 \end{equation*} with $C_{11}>0$ under control, and we are done. Otherwise at $(t_0,x_0)$ we have $\tr_{\omega_X}(\tom_t)\geq \sqrt{C_9 u^3}+T(\f_{t+\varepsilon}+C_0)^3$, and then \eqref{eq: max3} becomes
 \begin{equation}\label{eq: max4}
     0\leq \log\tr_{\omega_X}(\tom_t)-2\tr_{\tom_t}(\omega_X)+C_{10}.
 \end{equation}
Now it follows from \cite[Lemma 4.1.1]{boucksom2013regularizing} that 
\begin{equation}\label{ineq: bg13}
    \log\tr_{\omega_X}(\tom_t)\leq (n-1)\log\tr_{\tom_t}(\omega_X)+C_{12}-\psi^{-}
\end{equation} with $C_{12}$ depending on $\sup|\f|$, $\sup|\dot{\f}|$ and $\sup_X\psi^+$. Together with~\eqref{eq: max4} this yields at $(t_0,x_0)$
\begin{equation}
    \label{eq: max5}
    0\leq -\tr_{\tom_t}(\omega_X)-\psi^{-}+C_{13}
\end{equation} since $(n-1)\log x-2x\leq -x+O(1)$ for $x>0$. Plugging this into~\eqref{ineq: bg13}, it follows that 
\begin{equation*}
    \log\tr_{\omega_X}(\tom_t)\leq (n-1)\log(C_{13}-\psi^-)+C_{12}-\psi^{-}
\end{equation*} at $(t_0,x_0)$. Therefore, 
\begin{equation*}\begin{split}
    H(t_0,x_0)&\leq t_0(n-1)\log(C_{13}-\psi^-)+C_{12}+(B\delta-1)\psi^{-}-B(\f-\rho_U)+1+\frac{1}{\f+C_0}.
\end{split}
\end{equation*} Now, using the obvious inequality $-bx+\log x\leq O(1)$ for $x\in (0,+\infty)$ we get a uniformly upper bound for $H$ at the point $(t_0,x_0)$. Applying this for $\varepsilon=\varepsilon_0/2$ to finish the proof.

\end{proof}

\subsection{Higher order estimates}
Thanks to Demailly's regularisation theorem \cite{demailly1992regularization}, there exists two sequences $\psi^{\pm}_j\in\mathcal{C}^\infty(X)$ such that \begin{itemize}
    \item $\psi^{\pm}_j$ decreases pointwise to $\psi^\pm$ on $X$, the convergence is in $\mathcal{C}^\infty$ topology on $U$, $\sup_X\psi^\pm_j\leq C$;
    \item $\dc\psi^\pm_j\geq -B\omega$ for some $B>0$ under control.
\end{itemize} In fact, the constant $B>0$ depends on the Lelong numbers of quasi-psh functions $\psi^\pm$ according to  Demailly's result. Nevertheless, these Lelong numbers can be uniformly bounded in terms of the lower bound $-C\omega$ for $\dc\psi^\pm$ by standard arguments; see e.g.~\cite[Lemma 2.5]{boucksom2002volume}. Moreover, $\sup_X\psi^\pm_j$ is bounded independently of $j$, while we have for all $j\in\mathbb{N}$,
\begin{equation*}
    \|e^{-\psi^-_j}\|_{L^p}\leq  \|e^{\psi^-}\|_{L^p}.
\end{equation*}

Using Demailly's regularization theorem again, we also get a decreasing sequence of smooth functions $\f_0^j$ such that  $\f_0^j$ decreases pointwise to $\f_0$ as $j\to +\infty$, and $\omega_0+\dc \f_0^j\geq -\varepsilon_j\omega_X$ with $\varepsilon_j\to 0$. We then set $$\omega_t^j:=\omega_t+\varepsilon_j\omega_X,\quad f_j=e^{\psi^+_j-\psi^-_j}.$$ 
Since for each $j$, $(\omega_t^j)$ is a smooth family of Hermitian forms, and  $f_j$ is smooth. Moreover, $\f_0^j$ is smooth and strictly $\omega_0^j$-psh. It follows from~\cite{tosatti2015evolution,to2018regularizing} that there exists a unique function $\f^{j}\in \mathcal{C}^\infty([0,T]\times X)$ such that \begin{equation}\label{eq: approx_cmaf}
    \begin{cases}
    (\omega_t^j+\dc\f^{j}_t)^n=e^{\dot{\f}^{j}+F(t,x,\f^{j})}f_j dV\\
    \f^{j}|_{\{0\}\times X}=\f_0^j.
    \end{cases}
\end{equation}

As in the proof of Theorem~\ref{thm: existence},  up to extracting and relabelling, there exists a (unique) bounded function $\f$ such that $\f^j\to \f$ in $L^1_{\rm loc}(X_T)$ and 
\begin{equation*}
    (\omega_t+\dc \f_t)^n\wedge dt=e^{\dot{\f}_t+F(t,x,\f)}f(x)dV(X)\wedge dt
\end{equation*} in the sense of measures on $X_T$. The convergence $\f^j\to\f$ is moreover  locally uniform in $X_T$.
We are going to prove that $\f_t$ is smooth in $U$ for each $t>0$. 

Observe that $f_j=e^{\psi^+_j-\psi^j_j}\leq e^{C-\psi^-_j}$ is uniformly bounded in $L^p$ and the family of Hermitian form $(\omega_t^j)$ satisfies
\begin{equation*}
 \theta\leq \omega_t^j\leq A\omega_X,\qquad   -A\omega_t^j\leq \dot{\omega}_t^j\leq A\omega^t_j
\end{equation*} for some uniform constant $A>0$.
Fix $\varepsilon>0$.
From the previous section, we have a  uniform bounds for $\f_t^j$ and $\dot{\f}_t^{j}$ on $[\varepsilon,T]\times X$:
\begin{equation*}
    \sup_{[\varepsilon,T]\times X}\left( |{\f}_t^{j}|+\left|\frac{\partial{\f}_t^{j}}{\partial t} \right| \right)\leq C_{\varepsilon,T},
\end{equation*} for some constant $C_{\varepsilon,T}$.
Fix a compact set $K\Subset U$. We apply Theorem~\ref{thm: laplacian} to have a uniform Laplacian estimates for $\f_t^{j}$ on $[\varepsilon,T]\times K$:
\begin{equation*}
    \sup_{[\varepsilon,T]\times K}|\Delta\f_t^{j}|\leq C_{\varepsilon,T,K}.
\end{equation*}
 Then using the complex parabolic Evans-Krylov theory and Schauder's estimates, we obtain higher order estimates for $\f_t^j$ on $[\varepsilon,T]\times K$. Namely, for each $m\in\mathbb{N}$ there exists $C_{m,\varepsilon,K}>0$ such that
 \begin{equation*}
     \|\f^{j}\|_{\mathcal{C}^m([\varepsilon,T]\times K)}\leq C_{m,\varepsilon,K}.
 \end{equation*}
Up to extracting a subsequence, we have $\f^j\to \f$ in $L^1(X_T)$ such that on $(0,T)\times U$, $\f$ is smooth and satisfies
$$(\omega_t+\dc\f_t)^n=e^{\dot{\f}_t+F(t,x,\f_t)}f(x) dV(x)$$ 
with $\f_0=\lim_j\f_0^j$.
\section{Chern-Ricci flows on varieties with log terminal singularities}\label{sect: crf_lt}
In this section we prove the existence and uniqueness of the weak Chern-Ricci flow on complex varieties with log terminal singularities. 
We prove that the hypotheses from previous sections are satisfied, allowing one run the Chern-Ricci flow  from an arbitrary closed positive current with bounded potentials. 
\subsection{Notations}\label{sect: notionCRF}

We refer the reader to~\cite[Sect. 5]{eyssidieux2009singular} for basic facts on pluripotential theory on compact complex variety with log terminal singularities. Roughly speaking one can consider local embedding $j_\alpha:U_\alpha\hookrightarrow \mathbb{C}^N$ and objects 
that are restrictions of global ones.

We assume here that $Y$ is a $\mathbb{Q}$-Gorenstein variety, i.e. $Y$ is a normal complex space such that
its canonical divisor $K_Y$ is $\mathbb{Q}$-Cartier.
We denote the singular set of $Y$ by $Y_{\rm sing}$ and let $Y_{\rm reg}:=Y\setminus Y_{\rm sing}$.
Given a log resolution of singularities $\pi:X\to Y$ (which may and will always be chosen to be an isomorphism over $Y_{\rm reg}$ ), there exists  a unique (exceptional) $\mathbb{Q}$-divisor $\sum a_iE_i$ with simple normal  crossings (snc for short) such that
\begin{equation*}
    K_X=\pi^*K_Y+\sum_ia_iE_i,
\end{equation*} The coefficients $a_i\in\mathbb{Q}$ are called {\em discrepancies} of $Y$ along $E_i$. 
\begin{definition}
  We say that $X$ has \emph{log terminal} (\emph{lt} for short) singularities if and only if $a_i>-1$ for all $i$.  
\end{definition}
    
This definition is independent   of the choice of a log resolution. We refer the reader to \cite{kollar1998birational} for more details.
 
Let $m$ be a positive integer such that $mK_Y$ is Cartier. If we choose $\sigma$ a local generator of $mK_Y$ defined on an open subset $U$ of $Y$, then \begin{equation}\label{eq: sect4_1}
 (i^{mn^2}\sigma\wedge\Bar{\sigma})^{1/m}   
\end{equation}
defines a smooth positive volume form on $U\cap Y_{\rm reg}$. If $
f_i$ is a local equation of $E_i$ around a point $\pi^{-1}(U)$, then we can see that 
\begin{equation}\label{eq: adt}
  \pi^*\left(i^{mn^2}\sigma\wedge\Bar{\sigma} \right)^{1/m}=\prod_i|f_i|^{2a_i}dV  
\end{equation} locally on $\pi^{-1}(U)$ for some local volume form $dV$. Since $\sum E_i$ are simple normal crossing, this implies that $Y$ has lt singularities if and only if  each volume of the form \eqref{eq: sect4_1} has locally finite mass near singular points of $Y$.

The previous construction leads to the following definition of \emph{adapted measure} which is introduced in \cite[Sect. 6]{eyssidieux2009singular}:
\begin{definition}
    Let $h$ be a smooth hermitian metric on the $\mathbb{Q}$-line bundle $\mathcal{O}_Y (K_Y)$. The corresponding adapted measure $\mu_{Y,h}$ on $Y_{\rm reg}$ is locally defined by choosing a nowhere vanishing  section $\sigma$ of $mK_Y$ over a small open set $U$ and setting
    \begin{equation*}
        \mu_{Y,h}:=\frac{(i^{mn^2}\sigma\wedge\Bar{\sigma})^{1/m}}{|\sigma|_{h^m}^{2/m}}.
    \end{equation*}
\end{definition}  

The point of the definition is that the measure $\mu_{Y,h}$ does not depend on the choice of $\sigma$, so is globally defined. The arguments above show that $Y$ has lt singularities if and only if  $\mu_{Y,h}$ has finite total mass on $Y$, in which case we can consider it as a Radon measure on the whole of $Y$. Then $\chi=\dc\log\mu_{Y,h}$ is well-defined smooth closed $(1,1)$-form on $Y$. 


 \begin{remark}
 In \cite{song2017kahler}, Song and Tian defined an adapted measure of the form $\mu_{Y,h}$ for a smooth metric $h$ on $K_Y$ as a smooth volume form. We would like to avoid this terminology,  which has the drawback that $\omega^n$ might not be smooth in this sense even when $\omega$ is smooth positive $(1,1)$-form on $X$. This is in fact already the case for quotient singularities.
 \end{remark}
 Given a Hermitian form $\omega_Y$ on $Y$, there exists a unique hermitian metric $h=h(\omega_Y)$ of $K_Y$ such that 
 \[\omega_Y^n=\mu_{Y,h}.\]
We have the following definition.
\begin{definition}
    The \emph{Ricci curvature form} of $\omega_Y$ is $\textrm{Ric}(\omega_Y):=-\dc\log h$.
\end{definition}

We now define the notion of the weak Chern-Ricci flow on compact complex varieties with log terminal singularities. 
\begin{definition}
    Let $Y$ be a $\mathbb{Q}$-Gorenstein compact complex variety with log terminal singularities and let $\theta_0$ be Hermitian metric such that
    \begin{equation*}
        T_{\rm max}:=\sup \{t>0: \exists \psi\in\mathcal{C}^\infty(Y)\,\text{with}\; \theta_0-t\textrm{Ric}(\theta_0)+\dc\psi>0  \}>0.
    \end{equation*} Fix $S_0=\theta_0+\dc\phi_0$ a positive (1,1)-current with $\phi_0$ a bounded $\theta_0$-psh function.   
    A family $(\theta_t)_{t\in[0,T_{\rm max})}$ of semi-positive (1,1)-currents on $Y$ is said to be a solution of the \emph{weak Chern-Ricci flow} starting with $S_0$ if the following conditions hold:
    \begin{enumerate}
    \item $\{\theta_t\}= \{\theta_0\}-t c_1^{BC}(Y)\in H^{1,1}_{\rm BC}(X,\mathbb{R})$,
        \item $\theta_t\to S_0$ as $t\to 0$,
        \item $(\theta_t)_{t\in[0,T_{\rm max})}$ restrict to a smooth path of Hermitian forms on $ Y_{\rm reg}$ satisfying
        \begin{equation*}
            \frac{\partial\theta_t}{\partial t}=-\textrm{Ric}(\theta_t).
        \end{equation*} 
    \end{enumerate}
\end{definition}
\subsection{Existence and uniqueness of the weak Chern-Ricci flow}
Our aim is to establish the existence and uniqueness for the weak Chern-Ricci flow on complex variety with log terminal singularities. Furthermore, we obtain smoothing properties for the weak Chern-Ricci flow even if the initial data is not smooth. 
\begin{theorem}
  Let $Y$ be a $\mathbb{Q}$-Gorenstein compact complex variety with log terminal singularities and let $\theta_0$ be Hermitian metric
 such that  \begin{equation*}
        T_{\rm max}:=\sup \{t>0: \exists \psi\in\mathcal{C}^\infty(Y)\;\text{with}\; \theta_0-t\textrm{Ric}(\theta_0)+\dc\psi>0  \}>0.
    \end{equation*}   If $S_0=\theta_0+\dc\phi_0$ for some bounded $\theta_0$-psh function $\phi_0$, then there exists a unique solution $(\theta_t)_{t\in [0,T_{\rm max})}$ of the weak Chern-Ricci flow starting with $S_0$.
\end{theorem}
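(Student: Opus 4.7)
The plan is to reduce the problem to the existence/uniqueness of a pluripotential solution of a parabolic complex Monge--Amp\`ere equation on a log resolution, and then to push the resulting solution down to $Y$. Fix a log resolution $\pi:X\to Y$ which is an isomorphism over $Y_{\mathrm{reg}}$, and write
\begin{equation*}
K_X=\pi^*K_Y+\sum_i a_iE_i,\qquad a_i>-1.
\end{equation*}
Fix an auxiliary Hermitian metric $h_Y$ on $K_Y$ (coming from a Hermitian form on $Y$); let $\mu_{Y,h_Y}$ be the associated adapted measure and $\chi=\dc\log\mu_{Y,h_Y}$, so that $\mathrm{Ric}(\omega_Y)=-\chi$. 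By definition of $T_{\max}$, for any $T<T_{\max}$ we can choose $\psi\in\mathcal{C}^\infty(Y)$ such that $\hat\omega_t:=\theta_0-t\chi+\dc\psi>0$ on $Y$ for $t\in[0,T]$; this provides a smooth family of Hermitian forms on $Y$ with $\dot{\hat\omega}_t=-\chi$ and $\ddot{\hat\omega}_t=0$, so \eqref{assum: omega} holds for $\omega_t:=\pi^*\hat\omega_t$ on $X$. The form $\omega_t$ is semi-positive and, since $\theta:=\omega_0=\pi^*\theta_0+\dc(\pi^*\psi)$ is big with positive locus $\Omega=\pi^{-1}(Y_{\mathrm{reg}})$ by the classical argument recalled in the preliminaries, all hypotheses on $(\omega_t)$ from Section~\ref{assumption} are satisfied (taking $U=\Omega$).

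Next I translate the Chern--Ricci flow on $Y_{\mathrm{reg}}$ into a scalar equation on $X$. Using \eqref{eq: adt}, the pullback of the adapted measure can be written as $\pi^*\mu_{Y,h_Y}=f\,dV$ on $X$, where $dV$ is a fixed smooth volume form and
\begin{equation*}
f=\prod_i|s_i|_{h_i}^{2a_i}=e^{\psi^+-\psi^-},
\end{equation*}
with $\psi^+:=2\sum_{a_i\geq 0}a_i\log|s_i|_{h_i}$ and $\psi^-:=-2\sum_{a_i<0}a_i\log|s_i|_{h_i}$ quasi-psh, smooth on $U=X\setminus\mathrm{Exc}(\pi)$, with $\sup_X\psi^\pm<\infty$; since $a_i>-1$ and the $E_i$ have snc support, $e^{-\psi^-}\in L^p$ for some $p>1$. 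With $F(t,x,r):=r-\pi^*\psi$ (a smooth function, uniformly quasi-increasing, Lipschitz and convex in $r$), solving the Chern--Ricci flow on $Y_{\mathrm{reg}}$ starting from $S_0$ is equivalent to solving
\begin{equation*}
(\omega_t+\dc\varphi_t)^n=e^{\dot\varphi_t+F(t,x,\varphi_t)}f\,dV,\qquad\varphi|_{t=0}=\pi^*\phi_0-\psi,
\end{equation*}
in $\mathcal{P}(X_T,\omega)$; the initial datum is a bounded $\omega_0$-psh function on $X$ because $\phi_0$ is bounded and $\theta_0$-psh on $Y$.

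Existence of a bounded pluripotential solution $\varphi$ with $\varphi_t\to\varphi_0$ in $L^1(X)$, locally uniformly semi-concave in time and continuous on $(0,T)\times\Omega$, follows directly from Theorem~\ref{thmA}. Theorem~\ref{thmD} applies verbatim to $(F,f,\omega_t,\varphi_0)$ and gives that $\varphi$ is smooth on $(0,T)\times U=(0,T)\times\pi^{-1}(Y_{\mathrm{reg}})$. Defining $\theta_t:=\pi_*(\omega_t+\dc\varphi_t)$ on $Y$ (which makes sense since $\pi$ is an isomorphism over $Y_{\mathrm{reg}}$ and the potentials descend across $Y_{\mathrm{sing}}$ by bounded-potential pluripotential theory as in \cite{eyssidieux2009singular}), one gets a family of positive $(1,1)$-currents with bounded local potentials on $Y$, smooth and positive on $Y_{\mathrm{reg}}$, converging weakly to $S_0$ as $t\to 0^+$, and satisfying $\partial_t\theta_t=-\mathrm{Ric}(\theta_t)$ on $Y_{\mathrm{reg}}$: indeed, differentiating $(\omega_t+\dc\varphi_t)^n=e^{\dot\varphi_t+\varphi_t-\psi}\mu_{Y,h_Y}$ on $Y_{\mathrm{reg}}$ reproduces the Chern--Ricci flow equation with $\chi$ and $\psi$ cancelling correctly.

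For uniqueness, suppose $(\theta_t')$ is another weak Chern-Ricci flow starting from $S_0$. Writing $\theta_t'=\hat\omega_t+\dc(\pi^*\phi_t'-\psi)$ on $Y_{\mathrm{reg}}$ and extending the potential trivially to $X$ using that it is $\omega_t$-psh and bounded (the potentials extend uniquely across $\mathrm{Exc}(\pi)$ by standard plurisubharmonic extension), one obtains a pluripotential solution $\varphi'\in\mathcal{P}(X_T,\omega)$ of the same equation with the same initial data; smoothness on $Y_{\mathrm{reg}}$ forces both the continuity and the local semi-concavity in time required by Theorem~\ref{thmB}, and applying the comparison principle in both directions gives $\varphi'=\varphi$, hence $\theta_t'=\theta_t$. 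The main technical point, and where I would spend the most care, is the verification on the singular side: checking that any weak Chern-Ricci flow in the sense of the definition produces a genuine parabolic pluripotential solution on $X$ (in particular that $\varphi'\in\mathcal{P}(X_T,\omega)$ with the right $L^1$ boundary behaviour at $t=0$), so that the uniqueness of Theorem~\ref{thmB} can be invoked.
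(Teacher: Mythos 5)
Your overall strategy is the same as the paper's: pass to a log resolution $\pi:X\to Y$, reduce the Chern--Ricci flow to a parabolic Monge--Amp\`ere flow of the form~\eqref{cmaf} with density $f=e^{\psi^+-\psi^-}$ governed by the discrepancies, invoke Theorems~\ref{thmA}, \ref{thmB}, \ref{thmD}, and push the solution back down. However, there is a genuine error in the reduction to the scalar equation.

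\textbf{The spurious $r$-term in $F$.} You take $F(t,x,r):=r-\pi^*\psi$, so your scalar equation reads $(\omega_t+\dc\varphi_t)^n=e^{\dot\varphi_t+\varphi_t-\pi^*\psi}\,\mu_{Y,h_Y}$. This is not the scalar form of the unnormalized Chern--Ricci flow. For $\partial_t\theta_t=-\mathrm{Ric}(\theta_t)$ with $\theta_t=\hat\omega_t+\dc\varphi_t$ and $\partial_t\hat\omega_t\sim\mathrm{Ric}$-twist, taking $\dc\log$ of $\theta_t^n$ produces only $\dc\dot\varphi_t$, so the correct equation has no zeroth-order $\varphi_t$ term in the exponent: it is $(\omega_t+\dc\varphi_t)^n=e^{\dot\varphi_t}f\,dV$ once $\dc\psi$ is absorbed into $\omega_t$ (this matches the paper's $(\hat\theta_t+\dc\phi_t)^n=e^{\partial_t\phi}\mu_{Y,h}$). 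The extra $\varphi_t$ term corresponds to a different (normalized-type) flow; your claim that differentiating your equation ``reproduces the Chern--Ricci flow with $\chi$ and $\psi$ cancelling'' is not correct because $\psi$ is double-counted (it appears both as $\dc\psi$ in $\hat\omega_t$ and as $-\pi^*\psi$ in $F$), while $\varphi_t$ cancels with nothing. As a consequence, the unique pluripotential solution you obtain from Theorems~\ref{thmA}--\ref{thmD} with this $F$ does not solve the Chern--Ricci flow. The fix is simply to take $F\equiv 0$ (or a constant absorbing the $\psi$-normalization) after setting up $\hat\omega_t$ correctly; the rest of the argument is unaffected.

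\textbf{Positivity of $\hat\omega_t$ near $t=0$.} There is a secondary issue in your choice $\hat\omega_t:=\theta_0-t\chi+\dc\psi$ (also note the sign of $t\chi$ must match the $c_1^{BC}(K_Y)$ direction). A single fixed $\psi$ making $\theta_0-T\,\mathrm{Ric}(\theta_0)+\dc\psi>0$ need not make $\theta_0+\dc\psi>0$ at $t=0$. The standard remedy is to interpolate: take $\hat\omega_t:=(1-t/T)\theta_0+(t/T)(\theta_0-T\,\mathrm{Ric}(\theta_0)+\dc\psi_T)$, so the $\dc\psi$-term is $(t/T)\dc\psi_T$ and positivity holds on all of $[0,T]$ by convexity. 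The paper sidesteps this by choosing the reference metric $h$ (hence $\chi$) so that $\theta_0+t\chi$ is already Hermitian; either way, once this is set up correctly and $F$ is taken independent of $r$, the reduction is sound and your existence/uniqueness/push-down argument coincides with the paper's.
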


\begin{proof}
It is  classical  that solving  the (weak)  Chern-Ricci flow  is  equivalent to solving a complex Monge-Amp\`ere equation flow. 
Let $\chi$ be a closed smooth (1,1) form that represents $c^{\rm BC}_1(K_Y)$. Given $T\in (0,T_{\rm max})$, we set
\begin{equation*}
    \hat{\theta}_t:=\theta_0+t\chi
\end{equation*} which defines a affine path of semipositive (1,1)-forms with local potentials. Since $\chi$ is a smooth representative of $c_1^{\rm BC}(K_Y)$, one can find a smooth metric $h$ on the $\mathbb{Q}$-line bundle $\mathcal{O}_Y(K_Y)$ with curvature form $\chi$. We obtain $\mu_{Y,h}$ the adapted measure corresponding to $h$.
The Chern-Ricci flow is equivalent to the following Monge-Amp\`ere flow 
\begin{equation}\label{eq: maf}
    (\hat{\theta}_t+\dc\phi_t)^n=e^{\partial_t{\phi}}\mu_{Y,h}
\end{equation} where $\mu_{Y,h}$
 is the adapted measure on $Y$ corresponding to $h$. Since $\hat{\theta}_t$ is a Hermitian form, there is a small constant $c>0$ such that $\hat{\theta}_t\geq c\theta_0$ for all $t\in [0,T]$. 
 
Now let $\pi:X\to Y$ be a log resolution of singularities as defined in the previous subsection.  We have seen that the measure 
$$\mu:=fdV \quad\text{where}\;\; f=\prod_{i}|s_i|^{2a_i}$$ has  poles (corresponding to $a_i<0$) or zeroes (corresponding to $a_i>0$) along the exceptional divisors $E_i=(s_i=0)$, $dV$ is a smooth volume form. Passing to the resolution, the flow~\eqref{eq: maf} becomes
\begin{equation}\label{eq: cmaf}
    (\omega_t+\dc\f_t)^n=e^{\partial_t\f}\mu
\end{equation} where $\omega_t:=\pi^*\hat{\theta}_t$ and $\f:=\pi^*\phi$. Since $(\hat{\theta}_t)_{t\in [0,T]}$ is a smooth family of Hermitian forms, it follows that the family of semipositive forms $[0,T]\ni t\mapsto\omega_t$ satisfies all our requirements. We also have $\omega_t\geq \Tilde{\theta}:=\pi^*(c\theta_0)$, the latter is  smooth semipositive  and  big (see e.g.~\cite[Proposition 3.2]{fino2009blow}), but no longer hermitian. We fix a $\title{\theta}$-psh function $\Tilde{\rho}$ with analytic singularities along a divisor $E=\pi^{-1}(Y_{\rm sing})$ such that $\Tilde{\theta}+\dc\Tilde{\rho}\geq 2\delta\omega_X$ for some $\delta>0$. 

If we set $\psi^{+}=\sum_{a_i>0}2a_i\log|s_i|$, $\psi^{-}=\sum_{a_i<0}-2a_i\log|s_i|$, we observe that $\psi^{\pm}$ are quasi-psh functions with logarithmic poles along the exceptional divisors, smooth on $X\setminus \textrm{Exc}(\pi)=\pi^{-1}(Y_{\rm reg})$, and $e^{\psi^-}\in L^p(dV)$ for some $p>1$. Finally, we have
\begin{equation*}
   \{\Tilde{\rho}>-\infty\}=X\setminus \textrm{Exc}(\pi)=\pi^{-1}(Y_{\rm reg})\simeq Y_{\rm reg}.
\end{equation*} 
Applying Theorem~\ref{thmA} and~\ref{thmB}, there exists a unique pluripotential solution to the Monge-Amp\`ere flow on $X_T$ starting with $\f_0=\pi^*\phi_0$ for any fixed $T\in (0,T_{\rm max})$, denoted by $\f_t$.  The higher regularity properties of $\f$ follows from Theorem~\ref{thmD}.  Pushing this solution down to $Y$, we obtain a weak solution to the Chern-Ricci flow starting with $S_0$, denoted by $\phi_t$.

Now assume that $\psi_t$ is another solution to the weak Chern-Ricci flow on $Y$. Then $\pi^*\psi$ is a weak solution to the flow~\eqref{eq: cmaf} on $\pi^{-1}(Y_{\rm reg})$, it can be extended trivially on the whole $X$. The uniqueness result thus yields that $\f_t=\pi^*\psi_t$ on $X$, hence $\phi_t=\psi_t$ on $Y$.
\end{proof}

\bibliographystyle{alpha}
	\bibliography{bibfile}	
\end{document}